    \newcommand{\href}[2]{#2}
\theoremstyle{plain}
  \newtheorem{lemma}[equation]{Lemma}
  \newtheorem{proposition}[equation]{Proposition}
  \newtheorem{corollary}[equation]{Corollary}
    \newtheorem{question}[equation]{Question}
    \newtheorem{conjecture}[equation]{Conjecture}
\theoremstyle{definition}
  \newtheorem{definition}[equation]{Definition}
\renewcommand{\thesection}{\arabic{section}}
\renewcommand{\theequation}{\thesection.\arabic{equation}}
 \DeclareFontFamily{U}{manual}{}
 \DeclareFontShape{U}{manual}{m}{n}{ <->  manfnt }{}
 \newcommand{\manfntsymbol}[1]{%
    {\fontencoding{U}\fontfamily{manual}\selectfont\symbol{#1}}}
\endgroup\end{trivlist}}
 \newenvironment{example}[1][]{
   \refstepcounter{equation}
   \begin{proof}[Example~\theequation%
   \@ifnotempty{#1}{ (#1)}.]
   }
  {\end{proof}}
 \newenvironment{remark}[1][]{
   \refstepcounter{equation}
   \begin{proof}[\emph{\textbf{Remark~\theequation}%
   \@ifnotempty{#1}{ (#1)}}.]
   }
  {\end{proof}}
  \DeclareFontFamily{OT1}{pzc}{}
  \DeclareFontShape{OT1}{pzc}{m}{it}{<-> s * [1.100] pzcmi7t}{}
  \DeclareMathAlphabet{\mathpzc}{OT1}{pzc}{m}{it}
\newif\ifhascomments \hascommentstrue
  \newcommand{\jordan}[1]{{\color{magenta}[[\ensuremath{\bigstar\bigstar\bigstar} #1]]}}
  \newcommand{\matt}[1]{{\color{red}[[\ensuremath{\spadesuit\spadesuit\spadesuit} #1]]}}
  \newcommand{\david}[1]{{\color{blue}[[\ensuremath{\heartsuit\heartsuit\heartsuit} #1]]}}
  \newcommand{\jordan}[1]{}
  \newcommand{\matt}[1]{}
  \newcommand{\david}[1]{}
  \newcommand{\davidExtra}[1]{}
\renewcommand{\AA}{\mathbb{A}}
\DeclareMathOperator{\Aut}{\ensuremath{\mathcal{A}\kern-.125em\mathpzc{ut}}}
\newcommand{\bbar}[1]{\overline{#1}}
\newcommand{\C}{\mathcal C}
\newcommand{\CC}{\mathbb C}
\newcommand{\E}{\mathcal E}
\DeclareMathOperator{\Endo}{\ensuremath{\mathcal{E}\kern-.125em\mathpzc{nd}}}
\newcommand{\F}{\mathcal F}
\newcommand{\GG}{\mathbb G}
\DeclareMathOperator{\GL}{GL}
\let\hom\relax
\DeclareMathOperator{\hom}{Hom}
\DeclareMathOperator{\Hom}{\ensuremath{\mathcal{H}\kern-.125em\mathpzc{om}}}
\newcommand{\id}{\mathrm{id}}
\newcommand{\K}{\mathcal{K}}
\renewcommand{\L}{\mathcal L}
\renewcommand{\O}{\mathcal O}
\newcommand{\PP}{\mathbb{P}}
\newcommand{\RR}{\mathbb R}
\renewcommand{\setminus}{\smallsetminus}
\DeclareMathOperator{\spec}{Spec}
\DeclareMathOperator{\Spec}{Spec}
\newcommand{\T}{\mathcal T}
\newcommand{\W}{\mathcal W}
\newcommand{\X}{\mathcal{X}}
\newcommand{\Y}{\mathcal{Y}}
\newcommand{\Z}{\mathcal{Z}}
\newcommand{\ZZ}{\mathbb{Z}}
 \def\ari[#1]{\ar@{^(->}[#1]}
 \def\are[#1]{\ar[#1]^{\txt{\'et}}}
 \def\areh[#1]{\ar[#1]|{\txt{$H$-eq}}^{\txt{\'et}}}
 \def\ars[#1]{\ar@{->>}[#1]}
 \newcommand{\dplus}{\ar@{}[d]|{\mbox{$\oplus$}}}
 \newcommand{\dtimes}{\ar@{}[d]|{\mbox{$\times$}}} 
\DeclareMathOperator{\rDisc}{rDisc}
\DeclareMathOperator{\fake}{fake}
\DeclareMathOperator{\Fal}{Fal}
\DeclareMathOperator{\st}{st}
\DeclareMathOperator{\abs}{abs}
\DeclareMathOperator{\Res}{Res}
\DeclareMathOperator{\an}{an}
\DeclareMathOperator{\Sym}{Sym}
\DeclareMathOperator{\edd}{edd}
\DeclareMathOperator{\disc}{disc}
\DeclareMathOperator{\lcm}{lcm}
\DeclareMathOperator{\height}{ht}
\DeclareMathOperator{\Height}{Ht}
\DeclareMathOperator{\ind}{ind}
\DeclareMathOperator{\length}{length}
\DeclareMathOperator{\sqf}{sqf}
\DeclareMathOperator{\ord}{ord}
\DeclareMathOperator{\Pic}{Pic}
\DeclareMathOperator{\rank}{rank}
\DeclareMathOperator{\supp}{Supp}
\DeclareMathOperator{\Gal}{Gal}
\newcommand{\FF}{{\mathbb F}}
\newcommand{\cC}{\mathcal{C}}
\newcommand{\cO}{\mathcal{O}}
\newcommand{\cX}{\mathcal{X}}
\newcommand{\cV}{\mathcal{V}}
\newcommand{\cL}{\mathcal{L}}
\newcommand{\cZ}{\mathcal{Z}}
\newcommand{\cW}{\mathcal{W}}
\newcommand{\cM}{\mathcal{M}}
\newcommand{\cA}{\mathcal{A}}
\newcommand{\cU}{\mathcal{U}}
\newcommand{\Q}{\mathbf{Q}}
\renewcommand{\C}{\mathbf{C}}
\newcommand{\R}{\mathbf{R}}
\renewcommand{\Z}{\mathbf{Z}}
\renewcommand{\ZZ}{\mathbf{Z}}
\newcommand{\set}[1]{\{#1\}}
\newcommand{\tensor}{\otimes}
\renewcommand{\ra}{\rightarrow}
\renewcommand{\P}{\mathbb{P}}
\newcommand{\beq}{\begin{equation*}}
\newcommand{\eeq}{\end{equation*}}
\newcommand{\XX}{\mathcal{X}}
\newcommand{\OO}{\mathcal{O}}
\newcommand{\inj}{\hookrightarrow}
\newcommand{\defi}[1]{\textsf{#1}} 				
\newcommand*{\doublerightarrow}[2]{\mathrel{
  \settowidth{\@tempdima}{$\scriptstyle#1$}
  \settowidth{\@tempdimb}{$\scriptstyle#2$}
  \ifdim\@tempdimb>\@tempdima \@tempdima=\@tempdimb\fi
  \mathop{\vcenter{
    \offinterlineskip\ialign{\hbox to\dimexpr\@tempdima+1em{##}\cr
    \rightarrowfill\cr\noalign{\kern.5ex}
    \rightarrowfill\cr}}}\limits^{\!#1}_{\!#2}}}
\newcommand*{\triplerightarrow}[1]{\mathrel{
  \settowidth{\@tempdima}{$\scriptstyle#1$}
  \mathop{\vcenter{
    \offinterlineskip\ialign{\hbox to\dimexpr\@tempdima+1em{##}\cr
    \rightarrowfill\cr\noalign{\kern.5ex}
    \rightarrowfill\cr\noalign{\kern.5ex}
    \rightarrowfill\cr}}}\limits^{\!#1}}}
\begin{document}
\title{Heights on stacks and a generalized Batyrev--Manin--Malle conjecture}
\author{Jordan S. Ellenberg, Matthew Satriano, David Zureick-Brown}

\date{}
\maketitle

\begin{abstract} We define a notion of {\em height} for rational points with respect to a vector bundle on a proper algebraic stack with finite diagonal over a global field, which generalizes the usual notion for rational points on projective varieties.  We explain how to compute this height for various stacks of interest (for instance:  classifying stacks of finite groups, symmetric products of varieties, moduli stacks of abelian varieties, weighted projective spaces). In many cases our uniform definition reproduces ways already in use for measuring the complexity of rational points, while in others it is something new.  Finally, we formulate a conjecture about the number of rational points of bounded height (in our sense) on a stack $\cX$, which specializes to the Batyrev--Manin conjecture when $\cX$ is a scheme and to Malle's conjecture when $\cX$ is the classifying stack of a finite group.
\end{abstract}

\tableofcontents

\section{Introduction}

Two subjects of central importance in arithmetic statistics are the enumeration of number fields of bounded discriminant (governed by {\em Malle's conjecture}) and the enumeration of rational points of bounded height on varieties (governed by the {\em Batyrev--Manin conjecture}). 

More specifically,  if $G$ is a subgroup of $S_n$, denote by $N_G(B)$ the number of degree $n$ number fields $K/\Q$ whose Galois closure has Galois group $G$, and whose discriminant has absolute value at most $B$.  Similarly, if $X$ is a projective Fano variety, denote by $N_X(B)$ the number of rational points in $X(\Q)$ whose height is at most $B$.  Malle's conjecture predicts that $N_G(B)$ is asymptotic to $c B^{a(G)} (\log B)^{b(G)}$, where $a(G)$ and $b(G)$ are explicitly computable constants.  The Batyrev--Manin conjecture predicts that $N_X(B)$ is asymptotic to $c B^{a(X)} (\log  B)^{b(X)}$, where $a(X)$ and $b(X)$ are explicitly computable constants.   (The prediction of $c$ is much more delicate:  see Peyre~\cite[D\'efinition 2.1]{peyre:hauteurs-et-mesures} for the Batyrev--Manin case, and Bhargava~\cite{bhargava:Mass-formulae-for-extensions-of-local-fields-and-conjectures-on-the-density-of-number-field-discriminants} for the Malle case, in the special case $G = S_n$. We make no attempt in the present paper to study the constants in our generalization of Batyrev--Manin--Malle, and we say only a bit about the powers of $\log B$; we confine our concrete predictions to the exponents $a$.)

The similarity between these two asymptotic predictions has not gone unremarked.  The relation between the two conjectures becomes even closer upon making the observation that a Galois $G$-extension of $\Q$ actually {\em is} a rational point:  not a rational point on a variety, but a rational point on an algebraic stack, in this case the classifying stack $BG$.  It is thus natural to ask how one might formulate a conjecture about counting rational points of bounded height on a stack $\XX$, which would specialize both to the Batyrev--Manin conjecture (when $\XX$ is a Fano variety) and to Malle's conjecture (when $\XX$ is the classifying stack of a finite group).

An obstacle appears immediately:  there is no agreed-upon definition of the height of a rational point on a stack.  The conventional definition of height, due to Weil, is a real-valued function on $X(\Q)$ where $X$ is a projective variety.  It suffices to define height on $\P^n(\Q)$, because given the projective embedding $\iota \colon X \ra {\P^n}$, we simply define $\height_X(x)$ to be $\height_{\P^n}(\iota(x))$ for every point $x \in X(\Q)$.  But a stack which, like $BG$, is {\em not} a scheme, does not embed in projective space.

The goal of the present paper is to propose a definition of height for rational points on stacks over arbitrary global fields $K$, and, using this definition, to formulate a conjecture of Batyrev--Manin--Malle type for the number of rational points on a stack $\XX$ of height at most $B$ (under certain assumptions which guarantee this number is finite).  Having made the definition, we find that our notion of height applies to many interesting stacks which are neither schemes nor classifying spaces of finite groups (e.g.~weighted projective spaces, moduli spaces, symmetric powers of varieties).  In many cases, our definition agrees with ad hoc notions of ``size'' of a rational point which already appear in the literature.

We remark on some existing work concerning heights on stacks.  One proposed definition for the height of a point on a Deligne--Mumford stack is given and used by Abramovich and V\'{a}rilly-Alvarado in  \cite{abramovichVA:campana-points, abramovichVA:level-structures-kodaira, abramovichVA:level-structures}; this notion of height is useful for moduli spaces but does not, for example, extend to an interesting height on $BG$.  Beshaj, Gutierrez, and Shaska~\cite{shaska} have a definition of height on weighted projective space which agrees with ours in that case, as does the earlier preprint of Deng~\cite{deng1998rational}.  Starr and Xu~\cite [\S 1.4 of arXiv v1]{starr2020rational} have another definition whose relation to the one used in the present work is roughly that between the minimal slope in the Harder--Narasimhan filtration of a vector bundle and the slope of that vector bundle.
And in very recent work, Nasserden and Xiao~\cite{nasserdenS:The-density-of-rational-points-P1-with-three-stacky-points} offer an alternative definition for stacky curves, and Ratko Darda~\cite[Theorem 1.5.7.1]{darda2021rational-arxiv} has proposed a definition for weighted projective stacks.

We have seen above that one cannot define the height of a rational point of a stack by imitating the standard definition for rational points on varieties.   Before sketching our definition, we explain some further reasons for the difficulty of defining heights on stacks.

\subsubsection*{Failure of additivity}  A central feature of the theory of heights on varieties is {\em additivity}.  Given a proper variety $X$, we can define a height function $\height_{\cL}$ on $X(\Q)$ corresponding to any line bundle $\cL$ on $X$, and we have 
\begin{equation}
\height_{\cL \otimes \cL'}(x) = \height_{\cL} (x) + \height_{\cL'} (x)
\label{eq:additivity}
\end{equation}
for any pair $\cL,\cL'$ of line bundles on $X$ and any $x \in X(\Q)$.

It turns out there is no choice but to discard this useful feature when we extend the theory of heights to stacks.  The following example shows why.  Let $\XX = B(\Z/2\Z)$ and let $K=\Q$.  A line bundle $\cL$ on $\XX$ is a representation of $\Z/2\Z$; we choose $\C$ to be the nontrivial $1$-dimensional representation.  Then the tensor product of $\cL$ with itself is the trivial line bundle; i.e., $\cL \otimes \cL = \cO$ in $\Pic(\XX)$. 
 Thus $\height_{\cL\otimes \cL'}(x) = 0$ for all $x \in \XX(\Q)$.  If our height functions satisfied \eqref{eq:additivity}, we would have $2\height_{\cL} (x) = 0$, and thus $\height_{\cL}$ would be identically $0$, and thus uninteresting.\footnote{One might suggest abandoning the requirement that height functions be real-valued instead of abandoning additivity.  This feels like a bad idea to us:  for one thing, if our goal is to count points of bounded height we want the target of the height function to carry a natural ordering.}

\subsubsection*{Failure of valuative criterion of properness}  Suppose $K = \FF_q(t)$, and $X_0/K$ is a projective variety.  In this case, the height of a point $x \in X_0(K)$ has a very nice geometric interpretation.  We may choose an projective integral model $X/\P^1$ whose generic fiber is $X_0$.  By the valuative criterion of properness, we can extend $x$ to a section $\overline{x}\colon  \P^1 \ra X$.  Then the height of $x$ is just the degree of the line bundle $\overline{x}^* \OO_X(1)$ on $\P^1$.  (Note that the height may depend on the choice of integral model.)  When $X$ is a proper stack instead of a projective scheme, the valuative criterion of properness does not allow us to ``spread out" a rational point in this fashion.  For instance, an $\FF_q(t)$-point of $B(\Z/2\Z)$ is a quadratic extension of $\FF_q(t)$.  On the other hand, a map from $\P^1$ to $B(\Z/2\Z)$ is an \'{e}tale double cover of $\P^1$, which can only be the disjoint union of two copies of $\P^1$.  In particular, the fiber of such a map over the generic point $\Spec \FF_q(t)$ must correspond to the {\em trivial} quadratic extension $\FF_q(t) \oplus \FF_q(t)$. 

\subsubsection*{Modification of Northcott property}  A useful feature of the height on a variety $X$ attached to an ample line bundle $L$ is the {\em Northcott property}; the set of points $x$ in $X(\overline{K})$ with $h_L(x) < B$ and which are defined over an extension $K'/K$ of degree at most $d$ is finite.  We will often consider heights here which we want to consider ``positive," but which do not have this property.  For example, when $x \in B(\Z/2\Z)(K)$ is a point corresponding to an {\em everywhere unramified} $G$-extension of $K$, and $L$ is a (the!) nontrivial line bundle on $B(\Z/2\Z)$, we will see below that $h_L(x) = 0$.  But there are infinitely many distinct degree-$d$ extensions of $\Q$ which have everywhere unramified double covers, so the Northcott property cannot hold in its usual sense.  What will typically be true, on the other hand, is that the heights of greatest interest to us will admit only finitely many points of bounded height over any {\em individual} global field.  This is the notion of Northcott we will use in the present paper, though it does not quite follow the usual convention.

\subsubsection*{Vector bundles}  The usual height machine assigns a height function on $X(K)$ to any line bundle on $X$.  For rational points on a stack $\XX$, it turns out that this point of view is not quite sufficient for our purposes.  Consider again the example of $BG$ where $G$ is a finite group.  The line bundles on $BG$ are the $1$-dimensional representations of $G$; in particular, the line bundles only ``see" the abelianization of $G$, not all of $G$.  When $G$ is non-abelian, this turns out to imply that no height function coming from a line bundle on $\XX$ can compute the discriminant of the $G$-extension $L/K$ corresponding to a $K$-rational point.  Rather, we need access to the entire representation theory of $G$, which is to say we need to study heights associated to vector bundles of higher rank on $BG$.

\subsubsection*{Our definitions of heights on stacks} 
We now sketch the main idea of our definition.  Suppose $K$ is a global field.  If $K$ is a function field, let $C$ be the smooth projective curve with function field $K$; if $K$ is a number field, let $C$ be $\Spec \OO_K$.  Given a rational point $x\colon \Spec K \ra \XX$ we may not, as mentioned above, be able to extend $x$ to a morphism from $C$ to $\XX$.  However, it turns out that we {\em can} extend $x$ to a map $\overline{x}\colon \cC \ra \XX$, where $\cC$ is a so-called {\em tuning stack} over $C$.  When $C$ is $\P^1/\FF_q$, for instance, $\cC$ is a ``stacky $\P^1$" which is generically isomorphic to $\P^1$ but has some points with nontrivial finite inertia groups.  In general, the structure map $\pi\colon \cC \ra C$ will be a coarse moduli map,.

Suppose $\cV$ is a vector bundle on $\XX$, which we take to be metrized at archimedean places if $K$ is a number field.  Then $\overline{x}^* \cV$ is a vector bundle on the tuning stack $\cC$, and $\pi_* \overline{x}^* \cV$ is a vector bundle on $C$, whose determinant is a line bundle on $C$. We now define
\beq
\height_{\cV} (x) = -\deg( \det(\pi_* \overline{x}^* \cV^{\vee})).
\eeq
In the number field case, $-\det(\pi_* \overline{x}^* \cV^{\vee})$ is a {\em metrized} line bundle on $C$, and degree means Arakelov degree.

We note that the reason for the failure of additivity is now apparent:  while the pullback $\overline{x}^*$ is compatible with tensor product of vector bundles, the pushforward $\pi_*$ is not. Moreover, it really is crucial to include the push forward $\pi_*$; otherwise, line bundles on $BG$, which are all torsion in the Picard group, would all give trivial height functions! 

In the Section~\ref{sec:Heights of rational points on stacks}, we define $\height_{\cV}$ rigorously and show that it does not depend on the choice of tuning stack.  In Section~\ref{sec:examples}, we compute several examples, which show that this notion captures arithmetic quantities of interest in many cases. In particular, we show that if 
\begin{itemize}
\item $G$ is a subgroup of $S_n$,
\item $\cV$ is the corresponding $n$-dimensional permutation representation of $G$,
\item and $x$ is a point of $BG(\Q)$, corresponding to a degree-$n$ extension $K/\Q$ whose Galois closure has Galois group $G$,
\end{itemize}
the height $\height_{\cV}(x)$ is precisely the discriminant of $K/\Q$; see Subsection \ref{subsec:BG}.  This realizes the goal of expressing the discriminant of a field extension as the height of a rational point on the classifying stack of a finite group.

We also work out in varying levels of detail several examples of natural stacks: stacks birational to $\P^1$, weighted projective spaces, symmetric powers of projective spaces, and moduli stacks of abelian varieties.

Finally, we turn to conjectures about point-counting in Section~\ref{sec:count-rati-points}.  Using geometric intuition derived from the function field case, we propose a heuristic rate of growth for the function $N_{\XX,\cV}(B)$, the number of rational points $x$ of a stack $\XX$ such that $\height_{\cV}(x) \leq B$. There is one further technical hurdle worthy of note in the introduction: in the case of the Batyrev--Manin conjecture for schemes $X$, the expected growth rate $B^a$ is governed by the anti-canonical height $\height_{-K_X}$; in the case of stacks, one cannot simply import the same formula since for many stacks of interest, e.g.~$\X=BG$, the anti-canonical bundle is trivial! Thus, we introduce a new function (see Definition \ref{def:edd}) which replaces the anti-canonical height function on stacks; it can be viewed as a suitable perturbation of the anti-canonical height. Our point-counting conjecture \ref{conj:weak-batman} includes (the weak versions of) both the Batyrev--Manin conjecture and Malle's conjecture as special cases, but it makes many more predictions as well, which we hope will be the subject of future research.

\subsection{Notation and Conventions}
\label{subsec:not-conv}
Throughout this paper, we treat the arithmetic and the geometric settings in unison, letting $C$ denote either $\Spec \mathcal{O}_K$ for a number field $K$, or a smooth proper curve over a field $k$ in which case we set $K = k(C)$. In the number field case, we implicitly assume that all vector bundles are metrized. Finally, if $L/K$ is a finite extension of function fields corresponding to a map $f\colon C'\to C$, we let $\disc(L/K)$ be the degree of the ramification divisor.

\section*{Acknowledgments}
It is a pleasure to thank Dan Abramovich, Jarod Alper, Eran Assaf, Frank Calegari, Antoine Chambert-Loir, Brian Conrad, Ratko Darda, Anton
Geraschenko,  Aaron Landesman, Pierre Le Boudec, Brian Lehmann, Aaron Levin, Siddharth Mathur, Lucia Mocz, Brett Nasserden, Martin Olsson, Fabien Pazuki, Rachel Pries, Tony Shaska, Jason Starr, Sho Tanimoto, Anthony V\'{a}rilly-Alvarado, John Voight, Melanie Matchett Wood, Takehiko Yasuda, and Xinyi Yuan for valuable conversations about the material in this paper.  The first author was supported by NSF grant DMS-1700884 and DMS-2001200, a Simons Foundation Fellowship, and a Guggenheim Fellowship. The second author was supported by a Discovery Grant from the National Science and Engineering Board of Canada. The third author was partially supported by NSF grant DMS-1555048.

\section{Heights of rational points on stacks}
\label{sec:Heights of rational points on stacks}

Recalling our notation and conventions (Section \ref{subsec:not-conv}), let $K$ be either a number field or a function field of transcendence degree $1$ over $k$. In the former case, let $C=\spec\O_K$ and in the latter case, let $C$ be the smooth proper curve over $k$ with $K=k(C)$. Next, let $p\colon \cX \to C$ be a normal proper Artin stack over $C$ with finite diagonal.
This implies by \cite{conrad:keelMori} that there is a coarse space morphism $q\colon \cX \to X$.

A $K$-\defi{rational point} $x \in \cX(K)$ is a section
\[
x\colon \Spec K \to \cX
\]
of $p$ over the generic point $\eta := \Spec K$ of $C$, and an \defi{integral point} is a section $\overline{x} \colon C \to \cX$ of $p$. Now in the case of proper schemes, the valuative criterion tells us that every rational point extends uniquely to an integral point. However, this is no longer true for proper stacks; instead there exists a (possibly ramified) surjection $C' \to C$ such that the point $x' \colon \Spec k(C') \to \cX$ extends to an integral point $C' \to \cX$. It is precisely this phenomenon that leads to difficulties in defining heights on stacks.

Before discussing how to define heights of rational points on stacks, let us start by describing heights of integral points. This is actually rather simple and not different from the case of schemes. Given a vector bundle $\cV$ on $\cX$, we let the \defi{height} $\height_{\cV}(\overline{x})$ of an integral point $\overline{x}\colon C\to\X$ be $-\deg \left( \overline{x}^*\cV^{\vee} \right)$. (In the arithmetic setting, $\cV$ is metrized, and we mean the Arakelov degree.) 
The notion of height of an integral point satisfies Weil's height machine, in that 
\[\height_{\cL^{\otimes n}}(\overline{x}) = n\height_{\cL}(\overline{x})\]
for a line bundle $\cL$.
As mentioned above, for proper schemes there is no difference between rational points and integral points, so for schemes it is enough to define heights for integral points. For stacks we must now deal with rational points that do not extend to integral points.

Let us now outline the general case of how we define heights of rational points on stacks. Given a rational point $x\colon C\dasharrow\X$, we know it extends to an integral point after allowing for a ramified extension of $C$. Unfortunately, there are many choices of such ramified extensions and so our first task is to construct a ``minimal'' such extension; this extension is no longer a curve, but rather a stack, which we call a \emph{tuning stack}. Precisely, we construct a commutative diagram
\[
\xymatrix{
\spec K\ar[dr]\ar[r] \ar@/^1.4pc/[rr]^{x} & \cC\ar[r]^-{\overline{x}}\ar[d]_-{\pi} & \X\ar[dl]^-{p}\\
& C & 
}
\]
where $\pi\colon\cC\to C$ is a birational coarse space map, and $\overline{x}\colon\cC\to\X$ is a representable morphism of stacks which extends the rational point $x\colon\spec K\to\X$. We can therefore think of $\cC$ as being a ``stacky version'' of $C$ and can think of $\overline{x}$ as an integral point of $\X$. We then define the \defi{stable height}  of the rational point $x\in\X(K)$ with respect to $\cV$ to be
\[\height^{\st}_{\cV}(x) := -\deg(\overline{x}^*\cV^{\vee})\]
and define the unstable height (which we will refer to as simply the height) of the rational point $x\in\X(K)$ with respect to $\cV$ to be
\[\height_{\cV}(x) := -\deg(\pi_*\overline{x}^*\cV^{\vee}).\]
In Subsection \ref{subsec:tuning-stacks-and-sheaves} we show that tuning stacks exist and discuss their basic properties. We then turn to the study of heights in Subsection \ref{subsec:heights}, and in Subsections \ref{subsec:local-stacky-heights} and \ref{subsec:computing-heights} discuss some details of the practical computation of heights. In Appendix \ref{ss:one-dimensional-artin} we gather technical facts about one dimensional normal Artin stacks with finite diagonal (i.e., the types of stacks that occur as tuning stacks). 

\subsection{Tuning stacks and tuning sheaves}
\label{subsec:tuning-stacks-and-sheaves}

Throughout we let $K$, $C$, and $\X$ be as at the start of Section~\ref{sec:Heights of rational points on stacks}. Motivated by the \emph{tuning module} of Yasuda--Wood \cite[Definition 3.3]{woodY:2015mass-I}, we begin by defining the central object of this subsection.
\begin{definition}
\label{def:tuning-stack}
Given $x\in\X(K)$, we say that $(\cC,\overline{x},\pi)$ is a \defi{tuning stack} for $x$ if $\cC$ is a normal Artin stack with finite diagonal, $\pi\colon \cC \to C$ is a birational coarse space map, and the diagram
\[
\xymatrix{
\spec K\ar[dr]\ar[r] \ar@/^1.4pc/[rr]^{x} & \cC\ar[r]^-{\overline{x}}\ar[d]^-{\pi} & \X\ar[dl]\\
& C &
}
\]
commutes. 
A morphism $(\cC',\overline{x}',\pi')\to(\cC,\overline{x},\pi)$ of tuning stacks for $x$ is a map $f\colon\cC'\to\cC$ such that $\pi\circ f=\pi'$ and $\overline{x}\circ f=\overline{x}'$. 
Finally, if $(\cC,\overline{x},\pi)$ is terminal among all tuning stacks, we say $\cC$ is a \defi{universal tuning stack} for $x$.
\end{definition}

We show the existence of a universal tuning stack after some preliminaries.

\begin{remark}
\label{rmk:extending rational pts to open subsets}
Given a rational point $x\colon\spec K\to\X$, there exists a non-empty open subset $U\subseteq C$ and a map $U\to\X$ over $C$ that extends the morphism $x$. Since $\X$ is of finite type over $C$, this follows, e.g., from \cite[Proposition B.1]{rydh-noetherian-approximation}.
\end{remark}

\begin{lemma}
\label{lemma:representable-morph-of-tunings-stacks-is-iso}
Let $x\in\X(K)$ and suppose $(\cC,\overline{x},\pi)$ and $(\cC',\overline{x}',\pi')$ are tuning stacks for $x$. Then the following hold.
\begin{enumerate}
\item\label{representable-morph-of-tunings::unique-map} If $\cC'\doublerightarrow{f}{g}\cC$ are two morphisms of tuning stacks, then $f$ and $g$ are isomorphic up to unique $2$-isomorphism.
\item\label{representable-morph-of-tunings::rep->iso} If $f\colon\cC'\to\cC$ is a representable morphism of tuning stacks, then $f$ is an isomorphism.
\item\label{representable-morph-of-tunings::rep-xs->iso} If $\overline{x}$ and $\overline{x}'$ are representable, then any map $f\colon\cC'\to\cC$ of tuning stacks is an isomorphism.
\end{enumerate}
\end{lemma}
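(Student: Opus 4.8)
The plan is to exploit the defining property of coarse space maps — that they are initial among maps to algebraic spaces, and more concretely that they are universal homeomorphisms which are isomorphisms over a dense open — together with the fact that $C$ is a scheme (hence an algebraic space, with trivial inertia). First I would prove \eqref{representable-morph-of-tunings::unique-map}. Given two maps $f,g\colon\cC'\to\cC$ of tuning stacks, I want a $2$-isomorphism $f\Rightarrow g$, unique if it exists. Since $\cX$ has finite diagonal, the morphism $\overline{x}\colon\cC\to\cX$ has finite (in particular separated) diagonal, so the sheaf $\mathrm{Isom}_{\cC'}(f,g)$ — the pullback along $(f,g)\colon\cC'\to\cC\times_{\cX}\cC$ of the diagonal of $\cC/\cX$, intersected with the condition of lying over $\pi'$ — is finite over $\cC'$. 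Over the dense open $U\subseteq C$ where $\pi$ and $\pi'$ are isomorphisms, $f$ and $g$ both restrict to the identity on (the preimage of) $U$, so $\mathrm{Isom}(f,g)$ has a section there; being finite over the normal stack $\cC'$ and generically equal to the identity section, it must be the identity section everywhere, by normality of $\cC'$ and the fact that a finite morphism to a normal scheme which is generically an isomorphism onto a component is an isomorphism onto that component. This simultaneously gives existence and uniqueness of the $2$-isomorphism.

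Next, for \eqref{representable-morph-of-tunings::rep->iso}, suppose $f\colon\cC'\to\cC$ is representable. The composite $\pi\circ f=\pi'$ is a coarse space map, and $\pi$ is a coarse space map; I would use that a coarse space map induces an isomorphism on coarse spaces and that, for a representable $f$, the relative inertia $I_{\cC'/\cC}$ is trivial. Concretely: over the common dense open $U$, $f$ is an isomorphism. A representable, proper (since $\cC'\to C$ and $\cC\to C$ are both proper, being coarse maps of proper stacks — or one argues finiteness directly from finite diagonal of both over $C$) birational morphism of normal one-dimensional Artin stacks with finite diagonal is an isomorphism: this is where I would invoke the structure theory of Appendix~\ref{ss:one-dimensional-artin}, since étale-locally on $C$ both $\cC'$ and $\cC$ look like $[\mathrm{Spec}\,A/\mu_n]$, and a representable dominant map between such quotient stacks that is generically an isomorphism must be an isomorphism (it induces an injection on the generic $\mu_n$'s which is forced to be surjective by the coarse-space comparison). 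The representability of $f$ is precisely what rules out, e.g., a further stacky thickening at a point, so this hypothesis is essential.

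Finally, \eqref{representable-morph-of-tunings::rep-xs->iso} follows formally from \eqref{representable-morph-of-tunings::rep->iso}: if $\overline{x}$ and $\overline{x}'$ are representable and $f\colon\cC'\to\cC$ is any map of tuning stacks, then from $\overline{x}\circ f=\overline{x}'$ and the fact that $\overline{x}$ is representable I can conclude $f$ is representable — indeed, the relative inertia $I_{\cC'/\cC}$ injects into $I_{\cC'/\cX}$ via $f$, and the latter injects into $I_{\cC'}$-over-$\cX$-stuff which is trivial because $\overline{x}'$ is representable; more cleanly, a morphism $f$ with $\overline{x}\circ f$ representable and $\overline{x}$ representable has $f$ representable, since $\overline{x}'^{-1}$ of a scheme is a scheme and $f$ is the base change of... — I would phrase this as: representability of $\overline{x}'=\overline{x}\circ f$ and of $\overline{x}$ implies representability of $f$ by the cancellation property for representable morphisms (if $h\circ f$ and $h$ are representable... actually one needs $h$ representable and $h\circ f$ representable $\Rightarrow f$ representable, which holds because the diagonal of $f$ is a base change of the diagonal of $h\circ f$ relative to that of $h$). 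Then \eqref{representable-morph-of-tunings::rep->iso} applies.

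I expect the main obstacle to be \eqref{representable-morph-of-tunings::rep->iso}: making rigorous that a representable birational coarse-space-compatible morphism of one-dimensional normal Artin stacks with finite diagonal is an isomorphism. This requires the local structure theory (tuning stacks look like root stacks / $\mu_n$-quotients étale-locally on $C$), and the crux is to show that representability forces the two local gerbe/quotient presentations to match exactly rather than one being a ramified cover of the other — the coarse space map being a bijection on points does the rest. If the universal tuning stack's existence (proved later) is available one could shortcut parts of this, but since this lemma is stated before that existence result, I would argue it directly via the appendix.
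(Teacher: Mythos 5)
Your strategy for parts (1) and (3) follows the paper's argument closely and is sound. For (1), you re-derive the extension-of-$2$-isomorphism result directly by analyzing the $\mathrm{Isom}$ sheaf (finite over $\cC'$ by finite diagonal of $\cX$, generically the identity section, extended by normality); the paper instead cites Proposition A.1 of Fantechi--Mann--Nironi, which packages precisely this argument, so you have simply inlined the cited lemma. For (3), your reduction to (2) via cancellation for representable morphisms (representability of $\overline{x}$ and of $\overline{x}'=\overline{x}\circ f$ implies representability of $f$) is exactly what the paper does, citing Conrad for the cancellation fact.

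The genuine gap is in (2). You propose to conclude via local structure: ``\'etale-locally on $C$ both $\cC'$ and $\cC$ look like $[\operatorname{Spec} A/\mu_n]$,'' and then compare the $\mu_n$'s. This fails on two counts. First, that local root-stack/$\mu_n$-quotient description is only available for tame stacks; the paper explicitly works with wild tuning stacks (e.g.\ $B(\Z/p\Z)$ in characteristic $p$, treated in Subsection~\ref{subsec:BG}), and Appendix~\ref{ss:one-dimensional-artin} supplies only a finite flat cover by a regular scheme, not a root-stack presentation. Second, even in the tame case, the assertion that representability plus agreement of coarse spaces forces the injection on stabilizers to be surjective is precisely the hard content, and you only gesture at it. The paper's route avoids the local analysis entirely: since $\pi\circ f=\pi'$ with $\pi$ and $\pi'$ both proper, quasi-finite, and birational coarse-space maps, cancellation gives that $f$ is proper, quasi-finite, and birational; being representable with normal target $\cC$, $f$ is then an isomorphism by Zariski's Main Theorem. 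This is characteristic-free and should replace your local-structure paragraph. Everything else in your proposal is on track.
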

\begin{proof}
We start with (\ref{representable-morph-of-tunings::unique-map}). Since $\pi$ and $\pi'$ are birational, there is a non-empty open subset $U\subseteq C$ over which both $\pi$ and $\pi'$ are isomorphisms. Then $f|_U=g|_U$. Since $\cC$ is normal and $\cC'$ is separated, \cite[Proposition A.1]{Fantechi--Mann--Nironi:smooth-toric-deligne-mumford-stacks} tells us there is a unique $2$-isomorphism $f\simeq g$.

We now turn to (\ref{representable-morph-of-tunings::rep->iso}) and (\ref{representable-morph-of-tunings::rep-xs->iso}). Since $\overline{x}'=\overline{x}\circ f$, if $\overline{x}$ and $\overline{x}'$ are representable then \cite[Corollary 2.2.7]{Conrad:arithmeticModuliOf} shows $f$ is also representable. Thus, (\ref{representable-morph-of-tunings::rep-xs->iso}) reduces to (\ref{representable-morph-of-tunings::rep->iso}). To handle case (\ref{representable-morph-of-tunings::rep->iso}), note that $\pi$ and $\pi'$ are birational, proper, and quasi-finite, so $f$ is as well. Then $f$ is a birational, proper, quasi-finite morphism of normal stacks, hence an isomorphism by Zariski's Main Theorem.
\end{proof}

The next result makes use of relative normalization for morphisms of stacks. We refer the reader to \cite[Definition 5.3]{tmf}. 

\begin{lemma}
\label{lemma:relative normalization for nonrepable maps}
Let $f\colon\Y\to\cZ$ be a 
quasi-compact quasi-separated morphism of stacks with finite diagonal. 
Let $\Y'\to\cZ$ be the relative normalization of $f$. If $\Y$ is normal, then $\Y'$ is normal.
\end{lemma}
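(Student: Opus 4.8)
The plan is to strip away the stackiness in two steps — first base-changing the target $\cZ$ to a scheme, then replacing the source $\Y$ by its coarse space — until we land on the classical statement that the relative normalization of a scheme in a normal algebraic space is normal.

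First I would reduce to the case in which $\cZ=\Spec A$ is affine. Since the relative normalization $\Y'\to\cZ$ is an affine (indeed integral) morphism, and since normality of an algebraic stack can be tested on any smooth presentation, it is enough to produce a normal scheme covering $\Y'$ smoothly and surjectively. Choose a smooth surjection $V\to\cZ$ with $V$ a disjoint union of affine schemes; then $\Y'\times_{\cZ}V\to V$ is affine, so $\Y'\times_{\cZ}V$ is a scheme, and it covers $\Y'$ smoothly and surjectively. Relative normalization commutes with flat base change on the target — a standard fact, compatible with the construction of \cite[Definition 5.3]{tmf}, which is where quasi-compactness and quasi-separatedness of $f$ are used, so that $f_*\O_{\Y}$ is quasi-coherent with formation compatible with flat base change — hence $\Y'\times_{\cZ}V$ is the relative normalization of $\Y\times_{\cZ}V\to V$. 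Moreover $\Y\times_{\cZ}V$ is again a quasi-compact quasi-separated Artin stack with finite diagonal, and it is normal since $\Y\times_{\cZ}V\to\Y$ is smooth and $\Y$ is normal. Replacing $\cZ$ by an affine piece of $V$ and $\Y$ by its preimage, we may assume $\cZ=\Spec A$.

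Next I would pass to the coarse space. Since $\Y$ has finite diagonal, \cite{conrad:keelMori} provides a coarse space $q\colon\Y\to X$ with $X$ an algebraic space; as $\Spec A$ is an algebraic space, the structure map factors as $\Y\to X\to\Spec A$, first through $q$ and then through a quasi-compact quasi-separated map $g$. Because $q_*\O_{\Y}=\O_X$ for a coarse space, we get $f_*\O_{\Y}=g_*\O_X$, so the relative normalization of $\Spec A$ in $\Y$ coincides with that of $\Spec A$ in $X$: namely $\Y'=\Spec A'$ with $A'$ the integral closure of $A$ in $\Gamma(X,\O_X)=\Gamma(\Y,\O_{\Y})$. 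Here $X$ is normal — this can be checked Zariski-locally, where $X$ is the spectrum of a ring of invariants of a finite group acting on a normal ring, and invariant subrings of normal rings are normal. It remains to invoke the classical fact that the relative normalization of a scheme in a normal algebraic space (along a quasi-compact quasi-separated morphism) is normal: concretely, $\Gamma(X,\O_X)$ is a finite product of normal domains, one for each connected component of $X$ (each such component being integral and normal), and the integral closure of $A$ in a finite product of normal domains is again a finite product of normal domains — taking the product of monic relations for the individual components. Hence $A'$ is a normal ring and $\Y'$ is normal.

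I expect the only real subtleties to be the two structural reductions rather than any computation: knowing that relative normalization of stacks commutes with flat (in particular smooth) base change on $\cZ$, which is what legitimizes the reduction to an affine target; and the identity $f_*\O_{\Y}=g_*\O_X$ through the coarse space, which is exactly what dissolves the stacky content so that the classical scheme-theoretic statement can finish the argument. It is worth tracking where the hypotheses enter: finite diagonal is used to produce the coarse space and the equality $q_*\O_{\Y}=\O_X$, while quasi-compactness and quasi-separatedness of $f$ are what make $f_*\O_{\Y}$ quasi-coherent with formation compatible with flat base change.
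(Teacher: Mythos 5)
Your argument is correct and is essentially the paper's proof: both reduce to an affine/scheme target by smooth base change (using that relative normalization commutes with flat base change), then factor through the Stein coarse space map to replace the stacky source by its normal coarse space, thereby landing on the classical fact that relative normalization of a scheme in a normal algebraic space is normal. The only cosmetic difference is that you unwind that last step by hand (the integral-closure-in-a-product-of-normal-domains argument) instead of citing \cite[\href{https://stacks.math.columbia.edu/tag/035L}{Tag 035L}]{stacks-project}; also note that your parenthetical justification that the coarse space is normal (``Zariski-locally a ring of invariants of a finite group'') overstates what local structure theory gives for general Artin stacks with finite diagonal, though the fact itself is standard and the paper also uses it without comment.
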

\begin{proof}
By definition of the relative normalization, $f$ factors as $\Y\to \Y' := \underline{\Spec}_{\cZ} \mathcal{O}' \to \cZ$, where the sheaf $\mathcal{O}'$ is the integral closure of $\mathcal{O}_{\cZ}$ in $f_* \mathcal{O}_\Y$ (i.e., the integral closure relative to the morphism of sheaves $\mathcal{O}_{\cZ} \to f_* \mathcal{O}_\Y$ induced by the map $f$). Letting $Z\to\cZ$ be a smooth cover by a scheme, we have a cartesian diagram
\[
\xymatrix{
\W\ar[r]\ar[d] & W'\ar[r]\ar[d] & Z\ar[d] \\
\Y\ar[r] & \Y'\ar[r] & \cZ
}
\]
where $\W$ may not be a scheme since we have not assumed $f$ is representable. Since relative normalization commutes with smooth base change, $W'\to Z$ is the relative normalization of $\W\to Z$. Since $W'\to\Y'$ is a smooth cover, to show normality of $\Y'$ it suffices to prove $W'$ is normal. We have therefore reduced to the case where $\cZ$ is a scheme, which we will denote by $Z$.

We are now in the situation where $f\colon\Y\to Z$ and $Z$ is a scheme. Notice that $\Y'\to Z$ is affine, and so $\Y'=Y'$ is a scheme. Since $Z$ is a scheme, we know that $f$ factors as $\Y\stackrel{\pi}{\longrightarrow} Y\stackrel{g}{\longrightarrow} Z$ where $\pi$ is a coarse space map (which exists since $\Y$ has finite diagonal). By definition, $\mathcal{O}'$ is the integral closure of $\mathcal{O}_{Z}$ in $f_* \mathcal{O}_\Y=g_*\pi_*\mathcal{O}_\Y=g_*\mathcal{O}_Y$ where the last equality holds because $\pi$ is Stein. Thus, $Y'\to Z$ is the relative normalization of $Y\to Z$. Since $\Y$ is normal, $Y$ is as well so $Y'$ is normal by \cite[\href{https://stacks.math.columbia.edu/tag/035L}{Tag 035L}]{stacks-project}.
\end{proof}

We are now ready to show the existence of universal tuning stacks. 
 We thank Martin Olsson for suggesting this construction.

\begin{proposition}[Universal tuning stacks exist]
\label{prop:tuning-stacks-exist}
Let $x\in\X(K)$. If $U\to\X$ is any extension of $x$ as in Remark \ref{rmk:extending rational pts to open subsets}, then its relative normalization $\overline{x}\colon \cC \to \X$ is a universal tuning stack, and it is independent of the choice of extension $U\to\X$.
\end{proposition}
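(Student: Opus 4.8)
The plan is to identify $\cC$ explicitly as the normalization of a genuine one-dimensional substack of $\X$, and then to read off both the tuning-stack axioms and the universal property from the classical theory of normalization. To that end, fix an extension $u\colon U\to\X$ as in Remark~\ref{rmk:extending rational pts to open subsets}. Since $p\circ u$ is the open immersion $U\hookrightarrow C$, the map $u$ factors through $\X\times_C U\to\X$ as a section of the separated morphism $\X\times_C U\to U$, hence as a closed immersion; so $u$ is an immersion and its scheme-theoretic image $Z\subseteq\X$ is a reduced irreducible closed substack containing $U$ as a dense open. As $Z\hookrightarrow\X\to C$ is proper and surjective, and, since $U\hookrightarrow C$ is birational, the maps $K\to\mathcal{K}(Z)\to K$ compose to the identity, the morphism $Z\to C$ is finite and $\mathcal{K}(Z)=K$; in particular $Z$ is one-dimensional. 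Relative normalization commutes with the flat base change $\X\times_C U\hookrightarrow\X$, over which the relative normalization of the closed immersion $U\hookrightarrow\X\times_C U$ is $U$ itself; globally, the integral closure of $\O_\X$ in $u_*\O_U$ is supported on $Z$ and, as $U$ is normal with $\mathcal{K}(Z)=K$, it is the integral closure of $\O_Z$ in its sheaf of rational functions. Hence $\cC\to\X$ is the normalization $\widetilde{Z}\to Z$ followed by $Z\hookrightarrow\X$, and $\pi\colon\cC\to C$ is an isomorphism over $U$.

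Next I would verify the tuning-stack axioms. Normality of $\cC$ is Lemma~\ref{lemma:relative normalization for nonrepable maps}. Since $\cC\to\X$ is affine and $\X$ has finite diagonal, $\cC$ is an Artin stack with finite diagonal: its diagonal is the composite of the closed immersion $\Delta_{\cC/\X}$ with the finite morphism obtained by base-changing $\Delta_\X$. The substantive point is that $\pi$ is a birational coarse space map. It is birational by the previous paragraph; it is finite because $\cC\to Z$ is finite (normalization of an excellent one-dimensional stack) and $Z\to C$ is finite; hence $\pi_*\O_\cC$ is a coherent $\O_C$-algebra, torsion-free with stalk $\mathcal{K}(\cC)=K$ at the generic point, so it lies in $K$, is integral over $\O_C$, and therefore equals $\O_C$; i.e.\ $\pi$ is Stein. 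By Keel--Mori \cite{conrad:keelMori} the stack $\cC$ has a coarse space $\cC\to M$, and $\pi$ induces a finite morphism $M\to C$ with $(M\to C)_*\O_M=\pi_*\O_\cC=\O_C$, which is therefore an isomorphism; so $C$ is the coarse space of $\cC$. Finally, the generic point $\Spec K\hookrightarrow\cC$ (available because $\pi^{-1}(U)\cong U$) makes the defining triangle commute, since $\overline{x}$ restricts to $u$ over $U$ and $u$ extends $x$.

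For universality, let $(\cC'',\overline{x}'',\pi'')$ be any tuning stack for $x$. It is normal and connected (its coarse space $C$ is connected), hence integral, with $\mathcal{K}(\cC'')=K$. Because $\X$ is separated and $\overline{x}''$ and $u$ agree at the generic point $\Spec K$, they agree on a common dense open (the argument of Lemma~\ref{lemma:representable-morph-of-tunings-stacks-is-iso}(\ref{representable-morph-of-tunings::unique-map}), via \cite[Proposition A.1]{Fantechi--Mann--Nironi:smooth-toric-deligne-mumford-stacks}); hence $\overline{x}''$ has scheme-theoretic image $Z$ too and factors as a dominant morphism $\cC''\to Z$. Since $\cC''$ is normal and integral with function field $\mathcal{K}(Z)=K$, the universal property of the normalization $\widetilde{Z}\to Z$ gives a unique $Z$-morphism $\cC''\to\widetilde{Z}=\cC$, which is automatically a morphism of tuning stacks, and unique as such by Lemma~\ref{lemma:representable-morph-of-tunings-stacks-is-iso}(\ref{representable-morph-of-tunings::unique-map}). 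Thus $\cC$ is terminal among tuning stacks for $x$; terminal objects being unique up to unique isomorphism, the construction is independent of the choice of $U\to\X$. The steps I expect to need the most care are the verification that $\pi$ is a coarse space map — the finiteness and Stein properties, where normality of $\cC$ and one-dimensionality of $Z$ are essential — and the generic comparison of $\overline{x}''$ with $u$, which drives universality and is where separatedness of $\X$ is used.
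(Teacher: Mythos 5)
Your proof has a fundamental gap in the first paragraph. You claim that since $u\colon U \to \X\times_C U$ is a section of the separated morphism $\X\times_C U \to U$, it is a closed immersion, and hence $u\colon U\to\X$ is an immersion. This is true for schemes, but false for stacks: a section of a separated morphism of Artin stacks is representable and finite (it is pulled back from the diagonal), but it is not a monomorphism --- and hence not a closed immersion --- unless the diagonal of the target is itself a monomorphism, i.e.\ unless the target is an algebraic space. Whenever $\X$ has nontrivial inertia at the generic point of the image of $x$, the map $u$ is not a monomorphism. For instance, if $\X=BG$ over $C=\Spec\Z$ and $x$ corresponds to a Galois $G$-extension $L/\Q$, then $u\colon U\to BG_U$ is finite \'etale of degree $|G|$.

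This breaks the identification $\cC\cong\widetilde Z$ on which the rest of your argument rests. In the $BG$ example, $\O_{BG}\to u_*\O_U$ is injective, so the scheme-theoretic image $Z$ is all of $BG$, which is already normal, so $\widetilde Z=BG$. But the relative normalization of $u$ is $[\Spec\O_L/G]$, not $BG$: on the smooth cover $\Spec\Z\to BG$ the sheaf $u_*\O_U$ restricts to $\O_L[1/N]$ (with $U=\Spec\Z[1/N]$), whose integral closure over $\Z$ is $\O_L$, not $\Z$. The step where you replace $u_*\O_U$ by ``the sheaf of rational functions of $Z$'' silently discards exactly this information, because when $u$ is not a monomorphism, $u_*\O_U$ is generically strictly larger than $\mathcal K(Z)$. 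Note that $BG\to C$ is a coarse map but is nowhere birational, so your $\widetilde Z$ is not even a tuning stack --- in fact it contradicts your own (correct) observation that $\cC_U\cong U$, since $(\widetilde Z)_U=BG_U\neq U$. The paper avoids this by never factoring through a scheme-theoretic image: it works with the relative normalization directly, proves it is normal with finite diagonal so that a coarse space exists, shows by base change to $\X_U$ that the relative normalization of the \emph{integral} map $U\to\X_U$ is $U$ itself (this is the correct ingredient; integrality suffices and closed-immersion-ness is not available), deduces birationality of the coarse map, and then proves universality via Lemma~\ref{lemma:representable-morph-of-tunings-stacks-is-iso} rather than via the universal property of the normalization of $Z$.
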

\begin{proof}
We abusively refer to the extended map $U\to\X$ as $x$. By definition of the relative normalization, $x$ factors as
\[
U\longrightarrow \cC := \underline{\Spec}_{\X} \mathcal{O}' \stackrel{\overline{x}}{\longrightarrow} \X,
\]
where the sheaf $\mathcal{O}'$ is the integral closure relative to the morphism of sheaves $\mathcal{O}_{\X} \to x_* \mathcal{O}_U$ induced by the map $x$. Lemma \ref{lemma:relative normalization for nonrepable maps} shows that $\cC$ is normal. Since $\overline{x}$ is representable, integral, and of finite type, it follows from \cite[\href{https://stacks.math.columbia.edu/tag/01WJ}{Tag 01WJ}]{stacks-project} that it is finite. Then finiteness of the diagonal for $\cC$ follows from finiteness of the diagonal for $\X$. Thus, $\cC$ has a coarse space map $\pi\colon\cC\to C'$. Since $\cC$ is normal, $C'$ is as well. The morphism $\overline{x}$ induces a map $q\colon C'\to C$. 

We next show that $\cC\to C$ is an isomorphism over $U$. Consider the cartesian diagram
\[
\xymatrix{
U\ar[r]^-{\alpha}\ar@{=}[d] & \cC_U\ar[r]^-{\beta}\ar[d] 
& \X_U\ar[r]^-{\gamma}\ar[d] & U\ar[d]\\
U\ar[r] & \cC\ar[r] & \X\ar[r] & C.
}
\]
Since relative normalization commutes with smooth base change, $\beta\colon\cC_U\to\X_U$ is the relative normalization of $\beta\circ \alpha\colon U\to\X_U$. Note that $\gamma\circ \beta \circ \alpha=\id_U$ is proper quasi-finite and $\gamma\colon\X_U\to U$ is separated, so $\beta\circ \alpha$ is proper quasi-finite, hence finite as it is representable. Thus, $\beta\alpha$ is integral 
so its relative normalization $\alpha\colon U\to\cC_U$ is an isomorphism. As a result, $\gamma\circ\beta\colon\cC_U\to U$ is an isomorphism.

Now that we have established $\cC\to C$ is an isomorphism over $U$, it follows that $q\colon C'\to C$ is an isomorphism over $U$. So, $q$ is a birational map of normal curves (or Dedekind schemes) hence an isomorphism. This shows that $\pi\colon\cC\to C'\simeq C$ is a birational coarse space map, and hence $\cC$ is a tuning stack.

Before turning to the claim concerning universality, we show that $\overline{x}\colon\cC\to\X$ is independent of the choice of open subset $U$ and extension $U\to\X$ of $x$. To see this it suffices to show that if $i\colon V\to U$ is the inclusion of a non-empty open subset, then the relative normalizations of $x\colon U\to\X$ and $x\circ i\colon V\to\X$ are the same. Letting $\overline{x}\colon\cC\to\X$ be the former normalization and $\overline{x}'\colon\cC'\to\X$ be the latter one, by functoriality of the relative normalization we obtain a morphism $f\colon\cC'\to\cC$ of tuning stacks. Lemma \ref{lemma:representable-morph-of-tunings-stacks-is-iso} (\ref{representable-morph-of-tunings::rep-xs->iso}) shows $f$ is an isomorphism.

To prove universality, let $(\cC',\overline{x}',\pi')$ be another tuning stack. By Lemma \ref{lemma:representable-morph-of-tunings-stacks-is-iso} (\ref{representable-morph-of-tunings::unique-map}), we need only show the existence of a map $f\colon\cC'\to\cC$ of tuning stacks. We let $\cC'\longrightarrow\cC''\stackrel{\overline{x}''}{\longrightarrow} \X$ be the relative normalization of $\overline{x}'$. Since $\pi$ and $\pi'$ are birational, we can choose a non-empty open subset $U\subseteq C$ over which $\pi$ and $\pi'$ are isomorphisms. We have just showed that $\cC$ is independent of the choice of $U$, so we have a commutative diagram
\[
\xymatrix{
U\ar[r]\ar[d] & \cC'\ar[r] & \cC''\ar[d]^-{\overline{x}''}\\
\cC\ar[rr]^-{\overline{x}}\ar@{-->}[rru]^-{g} & & \X
}
\]
where we obtain the morphism $g\colon\cC\to\cC''$ (shown as a dotted arrow above) from the universal property of the relative normalization of $x\colon U\to\X$. By Lemma \ref{lemma:relative normalization for nonrepable maps}, we know $\cC''$ is normal. We also know that $\overline{x}''$ is representable, integral, and of finite type, hence finite by \cite[Tag 01WJ]{stacks-project}. Then $\cC''$ has finite diagonal, so it has a coarse space. Since $\pi'$ is an isomorphism over $U$, we see $\cC''\to C$ is a coarse space which is an isomorphism over $U$; this follows from the same argument used to establish this fact for $\cC\to C$. So, $\cC''$ is a tuning stack for $x$. Finally, Lemma \ref{lemma:representable-morph-of-tunings-stacks-is-iso} (\ref{representable-morph-of-tunings::rep-xs->iso}) shows that $g$ is an isomorphism, and so $\cC'\longrightarrow\cC''\stackrel{g^{-1}}{\longrightarrow}\cC$ is our desired map of tuning stacks.
\end{proof}

\begin{corollary}
\label{cor:universal tuning stack iff representable}
  Let $(\cC',\overline{x}',\pi')$ be a tuning stack. Then $(\cC',\overline{x}',\pi')$ is a universal tuning stack if and only if $\overline{x}'$ is representable.
\end{corollary}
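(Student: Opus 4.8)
The plan is to reduce everything to Proposition~\ref{prop:tuning-stacks-exist} and Lemma~\ref{lemma:representable-morph-of-tunings-stacks-is-iso}. Recall that Proposition~\ref{prop:tuning-stacks-exist} produces an explicit universal tuning stack $(\cC,\overline{x},\pi)$ for $x$, namely the relative normalization of an extension $U\to\X$, and that in its construction $\overline{x}\colon\cC\to\X$ is representable (indeed finite). The corollary then follows by transporting this property across the essentially unique isomorphism between any two universal tuning stacks, and conversely by using part~(\ref{representable-morph-of-tunings::rep-xs->iso}) of Lemma~\ref{lemma:representable-morph-of-tunings-stacks-is-iso}.

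For the ``only if'' direction, suppose $(\cC',\overline{x}',\pi')$ is a universal tuning stack. Since it and $(\cC,\overline{x},\pi)$ are both terminal among tuning stacks for $x$, there are morphisms of tuning stacks in both directions, and by Lemma~\ref{lemma:representable-morph-of-tunings-stacks-is-iso}(\ref{representable-morph-of-tunings::unique-map}) their composites are $2$-isomorphic to the respective identities; hence the comparison map $f\colon\cC'\xrightarrow{\sim}\cC$ is an isomorphism. As $f$ is an isomorphism (in particular representable) and $\overline{x}$ is representable, the composite $\overline{x}'\cong\overline{x}\circ f$ is representable.

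For the ``if'' direction, suppose $\overline{x}'$ is representable, and let $(\cC,\overline{x},\pi)$ be the universal tuning stack of Proposition~\ref{prop:tuning-stacks-exist}. By universality of $\cC$ there is a morphism of tuning stacks $f\colon\cC'\to\cC$. Both $\overline{x}$ and $\overline{x}'$ are representable, so Lemma~\ref{lemma:representable-morph-of-tunings-stacks-is-iso}(\ref{representable-morph-of-tunings::rep-xs->iso}) shows $f$ is an isomorphism. Consequently $(\cC',\overline{x}',\pi')$ is also terminal: for any tuning stack $(\cD,\overline{y},\rho)$ for $x$, compose the canonical map $\cD\to\cC$ with $f^{-1}$ to obtain a map of tuning stacks $\cD\to\cC'$, and uniqueness up to unique $2$-isomorphism is Lemma~\ref{lemma:representable-morph-of-tunings-stacks-is-iso}(\ref{representable-morph-of-tunings::unique-map}). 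Hence $(\cC',\overline{x}',\pi')$ is a universal tuning stack.

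I do not expect a genuine obstacle here, since all the technical content is already contained in Proposition~\ref{prop:tuning-stacks-exist} and Lemma~\ref{lemma:representable-morph-of-tunings-stacks-is-iso}. The only point that needs to be remembered is that the explicitly constructed universal tuning stack has \emph{representable} (in fact finite) structure map to $\X$; this is what lets representability be moved back and forth along the comparison isomorphisms.
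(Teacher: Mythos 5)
Your proof is correct and follows essentially the same route as the paper's: both directions hinge on comparing $(\cC',\overline{x}',\pi')$ to the explicitly constructed universal tuning stack of Proposition~\ref{prop:tuning-stacks-exist} (whose $\overline{x}$ is representable by construction), using Lemma~\ref{lemma:representable-morph-of-tunings-stacks-is-iso}~(\ref{representable-morph-of-tunings::rep-xs->iso}) for the converse. You spell out a bit more of the standard ``terminal objects are canonically isomorphic'' bookkeeping than the paper does, but the argument is the same.
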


\begin{proof}
Let $(\cC,\overline{x},\pi)$ be the universal tuning stack constructed in Proposition \ref{prop:tuning-stacks-exist}. By construction, $\overline{x}$ is representable. Now if $(\cC',\overline{x}',\pi')$ is a universal tuning stack, by definition of universality, there is an isomorphism $f\colon\cC'\to\cC$ of tuning stacks. Then $\overline{x}'=\overline{x}\circ f$ shows that $\overline{x}'$ is representable.

Conversely, if $(\cC',\overline{x}',\pi')$ is a tuning stack, then by universality of $\cC$, we have a morphism $f\colon\cC'\to\cC$ of tuning stacks. The result then follows from Lemma \ref{lemma:representable-morph-of-tunings-stacks-is-iso} (\ref{representable-morph-of-tunings::rep-xs->iso}).
\end{proof}

\begin{remark}
\label{remark:tuning-stacks-inherit-properties}
  We note that the universal tuning stack $\cC$ inherits many properties of $\X$. For instance, if $\X$ is Deligne--Mumford, then so is $\cC$ (since the map $\cC \to \X$ is representable); similarly, $\mathcal{C}$ is separated. 
\end{remark}

\begin{example}[Root Stacks] 
\label{examp:root-stack}
Cadman~  \cite[Section 2]{cadman:using-stacks-to-impose-tangency} introduced the notion of a \defi{root stack}, which we will use repeatedly both in examples and in proofs.
Given an algebraic stack $Y$ and an effective Cartier divisor~$E$ on~$Y$, the \defi{root stack} $\widetilde{Y} \to Y$ of order~$r$ is obtained by formally adjoining an $r$th root $\widetilde{E}$ of $E$; in other words, for a scheme $T$ and a map $f\colon T \to Y$, a lift of $f$ to $\widetilde{Y}$ corresponds to an effective Cartier divisor $E'$ on $T$ and an equivalence $rE' \sim f^*E$. 
\end{example}

\begin{remark}
Not every tuning stack is universal. For example, given any tuning stack $(\cC,\overline{x},\pi)$ and a smooth non-stacky closed point $P$ of $\cC$, let $f\colon\cC' \to \cC$ be a root stack along $P$; then $f$ is an isomorphism away from $P$ and the composite $\overline{x} \circ f\colon\cC' \to \X$ is not representable. So Corollary \ref{cor:universal tuning stack iff representable} shows that $(\cC',\overline{x} \circ f,\pi\circ f)$ is a tuning stack which is not universal.

Occasionally we will need to work with the universal tuning stack itself, e.g., in Section~\ref{sec:count-rati-points} where we define the essential deformation dimension. 
However, we prove in Proposition \ref{prop:ht-indep-of-tuning-stack} that our notion of height is independent of the choice of tuning stack. 
In practice, it is frequently more convenient to construct a tuning stack via a more direct procedure than relative normalization, such as taking a quotient stack, or as a root stack; see Section~\ref{sec:examples} for examples.
\end{remark}

\begin{definition}
\label{def:tuning-sheaf}
Let $\cV$ be a vector bundle on $\X$. If $x\in\X(K)$ and $(\cC,\overline{x},\pi)$ is a choice of tuning stack, then we refer to $\pi_*\overline{x}^*\cV^{\vee}$ (which is a vector bundle by Corollary \ref{coro:vb-pushforward}) as the \defi{tuning sheaf} associated to $x$, $\cV$, and $\cC$.
\end{definition}

\subsection{Heights}
\label{subsec:heights}

We are now ready to give the definition of the height of a rational point on a stack (with respect to a given vector bundle). We define the height to be the degree of any associated tuning sheaf. The tuning sheaf is, in general, a vector bundle, so by degree we mean the degree of the top wedge power, which is now a line bundle (metrized in the arithmetic case) on $C$. We show that this is well-defined in Proposition \ref{prop:ht-indep-of-tuning-stack}.

\begin{definition}
\label{def:height-of-rat-pt}
Let $\X$ be a stack over $C$ and let $K = K(C)$. Let $\cV$ be a vector bundle on $\X$ and $x\in\X(K)$ be a rational section.  If $\cC$ is any tuning stack for $x$ and $\T_{x,\cV,\cC}$ is the associated tuning sheaf, we let $\height_\cV(x)=-\deg(\T_{x,\cV,\cC})$. In other words, the \defi{height of the rational point $x\in\X(K)$ with respect to $\cV$} is
\[
\height_\cV(x):=-\deg(\pi_*\overline{x}^*\cV^{\vee}),
\]
where $(\cC,\overline{x},\pi)$ is any choice of tuning stack for $x$.
\end{definition}

If $L$ is a finite extension of $K$, we can define the height of a point of $\XX(L)$ by letting $C'$ be $\Spec \OO_L$ (if $K$ is a number field) or the smooth projective curve with function field $L$ (if $K$ is a function field) and consider $\X' = \X \times_C C'$, which carries a vector bundle obtained by pulling back $\cV$.  Then we define the height of a point of $\X(L)$ to be the height of the corresponding point of $\X'(L)$.

At this point, we need to comment on a piece of notation.  When $C$ is a curve over a finite field $k$, the degree of a divisor $D= P_1 + \cdots + P_r$ on $C$ is understood to be $\sum_i \log |k_{P_i}|$, where $k_{P_i}$ is the residue field of the closed point $P_i$.  In particular, $\deg D$ does not lie in $\Z$, but in $(\log q)\Z$, where $q = |k|$.  This choice of notation is most natural in a context, as here, where we want to write down theorem statements and arguments which treat the case of number fields and function fields at once.  The reader who wants to work in the context where $C$ is a curve over a fixed finite field $k$ and avoid the number field case is free to take heights to be integers, which just modifies everything in this paper by a multiplicative factor of $\log q$.

The reader may wonder why the height is defined as the negative of the degree of a bundle obtained from $\cV^\vee$, rather than as the degree of a bundle obtained from $\cV$ itself.  The answer is that, in cases arising naturally, the heights as defined here will typically be bounded below (Northcott property) while a height defined to be $\deg(\pi_*\overline{x}^*\cV)$ will often take values unbounded both above and below, or only bounded above (Southcott property).

Another natural question: why do we not define the height of $x$ as $\deg_\cC \overline{x}^* \cV$ (where degree is defined in Definition \ref{defn:degree-length}), which would be more similar to the usual definition?  The main reason is that, as we shall see, $\deg_\cC \overline{x}^* \cV$ is identically zero for many choices of $\cX$ and nontrivial $\cV$ (e.g., for any line bundle on $BG$).  Nonetheless, this function will play a key role for us (it will differ from $\height_\cV(x)$ by local terms supported on the stacky locus of $\cC$, as we will see in \S~\ref{subsec:local-stacky-heights}), so we give it a name here.

\begin{definition}
\label{def:stable-height}
 Let $\cX$, $\cV$ and $K$ be as in Definition \ref{def:height-of-rat-pt}. Then \defi{stable height} $\height^{\st}_{\cV}(x)$ is defined by
 \beq
 \height^{\st}_{\cV}(x) = -\deg_\cC \overline{x}^* \cV^{\vee}
 \eeq
 for any choice of tuning stack $\cC$.
 \end{definition}

We justify the name ``stable height'' in Proposition \ref{prop:stable-height-is-stable} below. When $x$ is an {\em integral} point of $\XX$, we may take $C$ itself to be the tuning stack; in this case, $\pi$ is the identity and $\height(x)$ and $\height^{\st}(x)$ agree.

\begin{proposition}[Height and stable height are independent of tuning stack]
\label{prop:ht-indep-of-tuning-stack}
If $(\cC_1,\overline{x}_1,\pi_1)$ and $(\cC_2,\overline{x}_2,\pi_2)$ are two choices of tuning stacks for $x\in\X(K)$, then $-\deg(\pi_{1*}\overline{x}_1^*\cV^{\vee})=-\deg(\pi_{2*}\overline{x}_2^*\cV^{\vee})$ and $-\deg(\overline{x}_1^*\cV^{\vee})=-\deg(\overline{x}_2^*\cV^{\vee})$ for all vector bundles $\cV$ on $\X$.
\end{proposition}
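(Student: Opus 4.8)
The plan is to reduce to comparing an arbitrary tuning stack with the \emph{universal} one, and then to show that the passage to the universal tuning stack changes neither of the two degrees in question. Fix, via Proposition~\ref{prop:tuning-stacks-exist}, a universal tuning stack $(\cC_0,\overline{x}_0,\pi_0)$ for $x$, and put $\cW:=\overline{x}_0^{*}\cV^{\vee}$, a vector bundle on $\cC_0$. By terminality (Definition~\ref{def:tuning-stack}) and Lemma~\ref{lemma:representable-morph-of-tunings-stacks-is-iso}(\ref{representable-morph-of-tunings::unique-map}), for $i=1,2$ there is a morphism of tuning stacks $f_i\colon\cC_i\to\cC_0$, so $\overline{x}_i=\overline{x}_0\circ f_i$ and $\pi_i=\pi_0\circ f_i$; in particular $\overline{x}_i^{*}\cV^{\vee}=f_i^{*}\cW$. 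It therefore suffices to prove that for \emph{any} morphism of tuning stacks $f\colon\cC'\to\cC_0$ one has
\[
\deg_{\cC'}\bigl(f^{*}\cW\bigr)=\deg_{\cC_0}(\cW)\qquad\text{and}\qquad\deg\bigl(\pi'_{*}f^{*}\cW\bigr)=\deg\bigl(\pi_{0*}\cW\bigr),
\]
and then apply this with $\cC'=\cC_1$ and with $\cC'=\cC_2$. The first equality, which gives independence of the stable height, follows from the behaviour of degree under pullback recorded in Appendix~\ref{ss:one-dimensional-artin}: the map $f$ is birational, proper and quasi-finite because $\pi'$ and $\pi_0$ are, and the degree of a vector bundle is unchanged by pullback along such a morphism of one-dimensional normal Artin stacks with finite diagonal.

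For the second equality the essential input is that $f_{*}\cO_{\cC'}=\cO_{\cC_0}$. Granting this, the projection formula gives the chain of isomorphisms
\[
\pi'_{*}f^{*}\cW \;=\; \pi_{0*}f_{*}f^{*}\cW \;=\; \pi_{0*}\bigl(\cW\otimes f_{*}\cO_{\cC'}\bigr) \;=\; \pi_{0*}\cW
\]
of vector bundles on $C$; in the arithmetic case this isomorphism restricts over $\spec K$ to the identity on $x^{*}\cV^{\vee}$, hence is an isometry at the archimedean places and so preserves the metrized structure. Passing to top exterior powers and taking (Arakelov) degrees yields $\deg(\pi'_{*}f^{*}\cW)=\deg(\pi_{0*}\cW)$.

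It remains to establish $f_{*}\cO_{\cC'}=\cO_{\cC_0}$. Because $\cC_0$ has finite diagonal, $\pi_0\colon\cC_0\to C$ is separated; hence from properness and quasi-compactness of $\pi'=\pi_0\circ f$ we deduce that $f$ is proper and quasi-compact, so $f_{*}\cO_{\cC'}$ is a coherent sheaf of $\cO_{\cC_0}$-algebras. The unit map $\cO_{\cC_0}\to f_{*}\cO_{\cC'}$ is injective ($\cC_0$ is integral and $f$ dominant) and is an isomorphism over the dense open $U\subseteq C$ on which $\pi_0$ and $\pi'$ are isomorphisms. Thus $f_{*}\cO_{\cC'}$ is a torsion-free coherent $\cO_{\cC_0}$-module of generic rank one containing $\cO_{\cC_0}$, hence a finite $\cO_{\cC_0}$-subalgebra of the field of rational functions of $\cC_0$; being finite it is integral over $\cO_{\cC_0}$, and since $\cC_0$ is normal it equals $\cO_{\cC_0}$. (Each of these assertions may be checked on a smooth cover of $\cC_0$ by a scheme, reducing the coherence statement to the finiteness of a coarse space over the cover and using flat base change for $f_{*}$.)

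The one genuinely delicate point is this last identity. Since $f$ need not be representable, one cannot simply invoke ``proper and quasi-finite implies finite'', and the work is precisely in seeing that $f_{*}\cO_{\cC'}$ is coherent --- equivalently, integral --- over $\cO_{\cC_0}$; once that is in hand, normality of $\cC_0$ closes the argument. A more structural alternative sidesteps even this: the relative normalization $\cC'\to\cD\to\cC_0$ of $f$ is, by Lemma~\ref{lemma:relative normalization for nonrepable maps} and the coarse-space argument of Proposition~\ref{prop:tuning-stacks-exist}, again a tuning stack for $x$, while $\cD\to\cC_0$ is affine and hence representable, hence an isomorphism by Lemma~\ref{lemma:representable-morph-of-tunings-stacks-is-iso}(\ref{representable-morph-of-tunings::rep->iso}); this exhibits $\cO_{\cC_0}$ as the integral closure of $\cO_{\cC_0}$ in $f_{*}\cO_{\cC'}$, leaving again only the integrality of $f_{*}\cO_{\cC'}$ over $\cO_{\cC_0}$ to verify.
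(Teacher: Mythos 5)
Your proof is correct and follows essentially the same strategy as the paper: reduce to comparison with the universal tuning stack, show the map $f$ from a tuning stack to the universal one is Stein, then conclude by the projection formula (and Lemma~\ref{lemma:functorality-of-degree} for stable height). The paper establishes Stein-ness slightly more compactly by taking the Stein factorization of $f$ and applying Zariski's Main Theorem to the resulting finite, birational, representable map to the normal universal tuning stack; your worry at the end is unwarranted, since the coherence of $f_*\cO_{\cC'}$ already follows from the properness of $f$, which you established, and that is precisely what makes both your direct argument and the paper's Stein-factorization argument go through.
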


In fact we show more:  not only the height, but the isomorphism class of the tuning sheaf is independent of the choice of tuning stack.

\begin{proof}
Let $(\cC,\overline{x},\pi)$ be the universal tuning stack for $x$ whose existence we have shown in Proposition \ref{prop:tuning-stacks-exist}. 
By the universal property, there exist unique morphisms $f_i\colon\cC_i\to\cC$ of tuning stacks. Thus, we reduce immediately to the case where $\cC_1$ is universal and $f\colon \cC_2 \to \cC_1$ is a map of tuning stacks. Now, let
\[
\cC_2 \to \underline{\Spec}\, f_*\cO_{\cC_2} \to \cC_1
\]
be the Stein factorization. Then $\Spec f_*\cO_{\cC_2} \to \cC_1$ is a birational, finite, 
 representable map with normal codomain and hence an isomorphism by Zariski's Main Theorem. 

In particular $f$ is Stein (i.e.~the map  $\cO_{\cC_1} \to f_*\cO_{\cC_2}$ is an isomorphism). Then for any vector bundle $\mathcal{W}$ on $\mathcal{C}_1$, 
\[
\mathcal{W} \simeq \cO_{\cC_1} \otimes_{\cO_{\cC_1}} \mathcal{W}\simeq f_*\cO_{\cC_2} \otimes_{\cO_{\cC_1}} \mathcal{W}\simeq f_*f^*\mathcal{W}
\]
where the third isomorphism is the projection formula. Applying $\pi_{1*}$ to the above isomorphism with $\mathcal{W} = \overline{x}_1^*\cV^{\vee}$, we see $\pi_{1*}\overline{x}_1^*\cV^{\vee}\simeq \pi_{2*}\overline{x}_2^*\cV^{\vee}$ and so height is independent of the choice of tuning stack. (In the Arakelov case, we note that the tuning stacks are all birational, so that the metric does not change.) Independence of the stable height follows from Lemma \ref{lemma:functorality-of-degree} applied to $f_i$.
\end{proof}

The justification for the name ``stable height" is as follows.  As we shall see, the height $\height_{\cV}(x)$ does not behave well under ramified base change.  That is:  if $L/K$ is a finite extension, and $x_L$ the point of $\cX(L)$ obtained by composing $x\colon \Spec K \ra \cX$ with the structure map $p\colon\Spec L \ra \Spec K$, the relationship between $\height_{\cV}(x)$ and $\height_{\cV}(x_L)$ is not in general very transparent. For example, if $\cX = BG$ and $x \in \cX(K)$ corresponds to a Galois extension $L/K$ with Galois group $G$, then  $\height_{\cV}(x_L) = 0$, but $\height_{\cV}(x) \neq 0$ in general. For stable height, by contrast, the situation is much as we are used to from heights on schemes.

\begin{proposition}[Stable height is stable under base change]
\label{prop:stable-height-is-stable}
With $\cX$, $\cV$, $x$ and $x_L$ as above, and $L/K$ is a separable extension, then 
\beq
\height^{\st}_\cV(x_L) = [L:K] \height^{\st}_\cV(x). 
\eeq
\end{proposition}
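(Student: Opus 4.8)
The plan is to compare the tuning stacks for $x$ and $x_L$ directly. Let $(\cC,\overline{x},\pi)$ be a tuning stack for $x$ over $C$; I would like to exhibit a tuning stack for $x_L$ over $C'$ of the form $\cC_L := \cC \times_C C'$, together with the induced map $\overline{x}_L\colon \cC_L \to \X' = \X \times_C C'$. For this I need to check that $\cC_L \to C'$ is still a birational coarse space map with $\cC_L$ normal. Birationality is clear since $L/K$ is separable, so $C' \to C$ is generically étale and $\cC \to C$ is an isomorphism over a dense open; being a coarse space map is preserved under flat base change on the target (or one can appeal to the appendix on one-dimensional Artin stacks); normality of $\cC_L$ follows because $\cC_L \to \cC$ is a base change along the generically étale map $C' \to C$, and normality is preserved since the relevant local rings are étale over those of $\cC$ at the generic points and one has a normal total space over a Dedekind base --- here I would invoke the standard fact that a scheme/stack finite and flat over a normal base with generically reduced, and actually normal, fibers behaves well; more cleanly, $\cC_L$ is the normalization of $\cC$ in $L\cdot k(\cC)$ since $L/K$ is separable, so it is automatically normal.

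Granting that $\cC_L$ is a valid tuning stack for $x_L$, the stable height is computed as $\height^{\st}_\cV(x_L) = -\deg_{\cC_L} \overline{x}_L^*(\cV')^{\vee}$. Now $\overline{x}_L^*(\cV')^{\vee}$ is just the pullback of $\overline{x}^*\cV^{\vee}$ along the projection $g\colon \cC_L \to \cC$, because $\cV' $ is the pullback of $\cV$. So the whole claim reduces to the multiplicativity of degree under the finite flat base change $g$: I would invoke Lemma~\ref{lemma:functorality-of-degree} (the same functoriality-of-degree lemma used at the end of the proof of Proposition~\ref{prop:ht-indep-of-tuning-stack}), which should give $\deg_{\cC_L} g^* \mathcal{W} = (\deg g)\cdot \deg_{\cC} \mathcal{W} = [L:K]\deg_\cC \mathcal{W}$ for any vector bundle $\mathcal{W}$ on $\cC$, since the generic degree of $g$ is $[k(\cC_L):k(\cC)] = [L:K]$. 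Applying this with $\mathcal{W} = \overline{x}^*\cV^{\vee}$ and negating gives exactly $\height^{\st}_\cV(x_L) = [L:K]\height^{\st}_\cV(x)$. In the arithmetic (Arakelov) setting I would add the remark that the archimedean contributions scale the same way: each place of $K$ splits into places of $L$ whose local degrees sum to $[L:K]$, so the sum of the archimedean terms in the Arakelov degree also multiplies by $[L:K]$.

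The main obstacle I anticipate is the bookkeeping around \emph{which} tuning stack to use and verifying that $\cC \times_C C'$ really is one --- in particular that normality survives base change (this genuinely uses separability of $L/K$, which is why it is hypothesized) and that coarse-space-ness survives. Since Proposition~\ref{prop:ht-indep-of-tuning-stack} already tells us stable height is independent of the tuning stack, once I know $\cC_L$ is \emph{a} tuning stack for $x_L$ I am free to use it, and everything else is the degree-functoriality formalism. A mild subtlety: $g\colon \cC_L \to \cC$ need not be representable (it is a base change of the possibly-non-representable... actually $C'\to C$ is representable, so $g$ is representable), so no issue there; and I should make sure Lemma~\ref{lemma:functorality-of-degree} is stated for the relevant generality of finite flat maps of one-dimensional normal Artin stacks, which the appendix provides.
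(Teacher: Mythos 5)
Your overall approach matches the paper's: exhibit a tuning stack for $x_L$ sitting over the tuning stack $\cC$ for $x$, then apply Lemma~\ref{lemma:functorality-of-degree} (degree is multiplicative along finite maps of stacky curves). But there is a real error in the way you set it up. You define $\cC_L := \cC \times_C C'$ and try to argue it is normal, claiming ``normality is preserved since [the map is] generically étale\dots over a Dedekind base.'' This is false: base change of a normal scheme along a morphism of Dedekind schemes that is ramified at even one closed point can fail to be normal, and in fact can fail to be reduced. (Concretely, take $\cC$ and $C'$ both equal to $\Spec\mathcal{O}_L$ with $L/K$ ramified at $v$: then $\cC\times_C C'$ has nilpotents in the fiber over $v$.) Separability of $L/K$ only controls the generic fiber, not the special fibers where the base change is ramified, which are exactly the places one has to worry about.

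The fix is the one you gesture at in your ``more cleanly'' aside, and it is what the paper does: take $\cC'$ to be the normalization of $\cC\times_C C'$, which is honestly normal, and which is then a tuning stack for $x_L$. The degree of $g\colon\cC'\to\cC$ is still $[L:K]$ because normalization is birational and degree is read off the map of coarse spaces, so Lemma~\ref{lemma:functorality-of-degree} gives exactly the factor you want. So your argument is correct once you consistently replace $\cC\times_C C'$ by its normalization; as written, the object you introduced need not even be a stacky curve in the sense of Definition~\ref{def:stacky-curve-def}, and the justification for its normality does not hold.
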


\begin{proof} 
If $L$ is a number field, then let $C'=\spec\O_L$; if $L$ is a function field, then let $C'$ be the projective normal curve with function field $L$. Let $\mathcal{C}$ be a tuning stack for $x_K$. Then the normalization $\mathcal{C}'$ of the fiber product $\mathcal{C} \times_{C} C'$ is a tuning stack for $x_L$, and we compute that 
\beq
\height^{\st}_\cV(x_L) := \deg \overline{x}_L^*\cV = \deg g \cdot \deg \overline{x}^*\cV= [L:K] \height^{\st}_\cV(x),
\eeq
where $g$ is the projection $\mathcal{C}' \to \mathcal{C}$ and the middle inequality is Lemma \ref{lemma:functorality-of-degree}.
\end{proof}

When $\XX$ is a scheme, we can take $\cC = C$ and so stable height and height are the same.  More generally, height agrees with stable height whenever the vector bundle $\cV$ is pulled back from a vector bunde on a scheme.

\begin{proposition} Suppose $f\colon \cX \ra Y$ is a morphism over $C$, where $Y$ is a scheme.  Let $V$ be a vector bundle on $Y$.  Then, for all $x \in X(K)$,
\beq
\height_{f^* V}(x) = \height^{\st}_{f^* V}(x).
\eeq
\end{proposition}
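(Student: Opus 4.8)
The key point is that if $\cV = f^*V$ is pulled back from a scheme $Y$, then on any tuning stack $\overline{x}^*\cV^\vee$ is itself pulled back from a scheme, and this forces the pushforward $\pi_*$ along the coarse space map to do nothing to the degree. First I would fix a tuning stack $(\cC,\overline{x},\pi)$ for $x$, with coarse space $\pi\colon\cC\to C$ (which we may take to be $C$ itself, since $\pi$ is birational). The composite $f\circ\overline{x}\colon\cC\to Y$ is a morphism to a scheme, so it factors through the coarse space: $f\circ\overline{x} = h\circ\pi$ for a unique morphism $h\colon C\to Y$, where $h$ is just the restriction of $f\circ\overline{x}$ over the generic point, extended (this uses that $C$ is the coarse space and $Y$ is a scheme; concretely $\underline{\Spec}$ of the relevant sheaf, or the universal property of the coarse space). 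Hence
\[
\overline{x}^*\cV^\vee = \overline{x}^*f^*V^\vee = \pi^*h^*V^\vee.
\]

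Write $W := h^*V^\vee$, a vector bundle on $C$. Now I apply the projection formula together with the fact that the coarse space map $\pi$ is Stein (i.e.\ $\pi_*\cO_\cC = \cO_C$; this is part of the definition of coarse space, and is used the same way in the proof of Proposition~\ref{prop:ht-indep-of-tuning-stack}):
\[
\pi_*\overline{x}^*\cV^\vee = \pi_*\pi^*W \simeq W\otimes_{\cO_C}\pi_*\cO_\cC \simeq W.
\]
Therefore $\height_{f^*V}(x) = -\deg(\pi_*\overline{x}^*\cV^\vee) = -\deg(W) = -\deg_C h^*V^\vee$. On the other hand, by Definition~\ref{def:stable-height} and the same identification $\overline{x}^*\cV^\vee = \pi^*W$, the stable height is $\height^{\st}_{f^*V}(x) = -\deg_\cC \overline{x}^*\cV^\vee = -\deg_\cC \pi^*W$. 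So it remains to check that $\deg_\cC \pi^*W = \deg_C W$, i.e.\ that degree is preserved under pullback along the degree-one (birational) coarse space map $\pi$; this is exactly the content of Lemma~\ref{lemma:functorality-of-degree} (functoriality of degree), since $\pi$ has generic degree $1$. In the Arakelov/number-field case the metrics match because $\pi$ is an isomorphism at the archimedean places, exactly as noted parenthetically in the proof of Proposition~\ref{prop:ht-indep-of-tuning-stack}. Combining the two displayed computations gives $\height_{f^*V}(x) = \height^{\st}_{f^*V}(x)$.

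**Main obstacle.** The only nontrivial point is the existence and uniqueness of the factorization $f\circ\overline{x} = h\circ\pi$ through the coarse space. Since $\overline{x}\colon\cC\to\cX$ and $f\colon\cX\to Y$ are morphisms over $C$ with $Y$ a scheme, $f\circ\overline{x}$ is a morphism from $\cC$ to the scheme $Y$, and any morphism from a stack to an algebraic space (in particular a scheme) factors uniquely through its coarse moduli space — this is the universal property of the coarse space from \cite{conrad:keelMori}. One should double-check that this $h$ agrees over the generic point $\eta$ with $f\circ x$, which is automatic by birationality of $\pi$ and separatedness of $Y$. Everything else is a direct application of the projection formula, the Stein property of $\pi$, and Lemma~\ref{lemma:functorality-of-degree}, exactly as in the proof of Proposition~\ref{prop:ht-indep-of-tuning-stack}.
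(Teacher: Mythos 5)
Your proof is correct and matches the paper's argument essentially step for step: factor $f\circ\overline{x}$ through the coarse space map $\pi$ by the universal property, identify $\overline{x}^*f^*V^\vee$ with $\pi^*W$ for $W = g^*V^\vee$ on $C$, use the Stein property and projection formula to see $\pi_*\pi^*W \cong W$, and invoke Lemma~\ref{lemma:functorality-of-degree} (with $\deg\pi=1$) to match the stable height. There is nothing to add.
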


\begin{proof}
Let $\cC$ be a tuning stack for $x$, and let $\overline{x}\colon \cC \ra \XX$ be an extension of $x$ to $\cC$.  The map $f \circ \overline{x}\colon \cC \ra Y$ factors as $g \circ \pi$ for some $g\colon C \ra Y$, by the universal property of the coarse space.  So the vector bundle $\overline{x}^* f^* V$ can be written as $\pi^* g^* V$.  Noting that duality commutes with pullback, we now have
\beq
\height_{f^* V}(x) = - \deg_C \pi_* \pi^* g^* V^\vee
\eeq
and
\beq
\height^{\st}_{f^*V}(x) = -\deg_{\mathcal{C}} \pi^* g^* V^\vee = -\deg_{C} g^* V^\vee
\eeq
(where the last equality follows from Lemma \ref{lemma:functorality-of-degree} since $\deg \pi = 1$).
The result now follows from the fact that for any bundle $W$ on $C$, 
\[
W \cong \mathcal{O}_C \otimes_{\mathcal{O}_C} W \cong \pi_*\mathcal{O}_{\mathcal{C}} \otimes_{\mathcal{O}_C} W \cong \pi_*\pi^*W;
\]
the last isomorphism is the projection formula, and the middle follows since the coarse map is Stein \cite[Theorem 6.12]{rydh:quotients}.
\end{proof}

\begin{remark}
\label{rmk:ht-under-pullback-of-V}
  Similarly, if $f\colon \mathcal{X} \to \mathcal{Y}$ is a morphism of stacks and $\mathcal{V}$ is a vector bundle on $\mathcal{Y}$, then for any $x \in \mathcal{X}(K)$, 
\[
\height_{f^* \mathcal{V}}(x) = \height_{\mathcal{V}}(f \circ x),
\]
since a tuning stack for $x$ is also a tuning stack for $f \circ x$. 
\end{remark}

\begin{definition}
\label{def:northcott}
We say that a vector bundle $\cV$ on $\cX/K$ satisfies the \defi{Northcott property} if for every finite extension $L/K$ and every integer $B$,
\[
\{x \in \cX(L) \colon \height_{\cV}(x) \leq B\} 
\]
is finite.
\end{definition}

This definition is slightly unsatisfactory, because it will be too lenient for some choices of $\cX$.  For instance, if $\cX$ is a curve of genus at least $2$, it has finitely many points over every global field, so under this definition the Northcott property will be satisfied by {\em every} vector bundle.  In the present paper, however, we will almost always be considering stacks $\cX/K$ which have infinitely many $K$-rational points.  Under such circumstances we expect $\cV$ to satisfy the Northcott property if $\cV$ is ``positive enough'', which we demonstrate through several examples; see Section~\ref{sec:examples}.  (Be warned, however, that the Northcott vector bundles do not form a cone in any sense. For instance, it is possible that a line bundle $\mathcal{L}$ is Northcott but positive multiples $\mathcal{L}^{\otimes n}$ of it are not; the non-trivial line bundle on $B{\mu_2}$ has this property.)   It is in order to ensure that natural examples exhibit the Northcott property that we use $\cV^{\vee}$ rather than $\cV$ when defining height.

\begin{definition} Let $\cX$, $\cV$ and $K$ be as in Definition \ref{def:height-of-rat-pt}, with $\cV$ Northcott.  We define the \defi{counting function} associated to $\cV$ and $K$ to be 
\[
N_{\cV,K}(B) := \#\{x \in \cX(K) \colon \Height_{\cV}(x) \leq B\}.
\]
\end{definition}

\begin{remark}
  
In case $\cV$ is a vector bundle of rank greater than $1$, it would probably be better still to consider a definition of height which associates to $x$ the tuning sheaf  $\T_{x,\cV,\cC}$ {\em itself}, rather than its degree.  One might call such a height a ``lattice height."  For instance, the lattice height of a $\Q$-point on $\X$ would be a lattice $\Lambda$ in $\RR^{\rank \cV}$, rather than a real number; the height we study in the present paper would be the covolume of $\Lambda$.  This point of view is interesting even when $\cX$ is a scheme; see for instance the notion of {\em slopes of a rational point} introduced by Peyre in \cite[\S 4.2]{peyre:liberte-et-accumulation} and \cite{peyre:beyondheights}, and the related work of Browning and Sawin in the Hardy--Littlewood regime~\cite{browningS:free-rational-curves}.  On the other hand, when $\cX$ is $BG$ and $\cV$ is a permutation representation $G \inj S_n$, the lattice height of a rational point of $\cX$ corresponding to a degree-$n$ number field $L/Q$ is the ring of integers of $\OO_L$ considered as a lattice in $L \tensor_\Q \R$; the covolume of this lattice is the absolute value of the discriminant of the number field, which is indeed the height in the sense considered in this paper.  This lattice is often called the ``shape" of the number field, and the problem of counting number fields subject to constraints on shape is already an area of substantial activity; see for instance \cite{harron:cubics, harronharron:v4}.
\end{remark}

\subsection{Computing heights:  local discrepancies}
\label{subsec:local-stacky-heights}

We now turn to the problem of practical computation of heights of points on stacks.  

As above, let $C$ be the spectrum of the ring of integers of a number field or a smooth curve over a finite field, let $K$ be the fraction field of $C$, and let $\cX$  a normal proper Artin stack over $C$ with finite diagonal.   Let $\cV$ be a vector bundle on $\cX$, where we recall once again that if $K$ is a number field, $\cV$ is a metrized vector bundle, as defined in \S~\ref{ss:stackybundles}.

Let $x\colon \Spec K \ra \cX$ be a rational point of $\cX$, let $\cC$ be a tuning stack, $\pi\colon \cC \ra C$ the coarse moduli map, and $\overline{x}\colon \cC \ra\cX$ an integral extension of $x$.

By Definition~\ref{def:height-of-rat-pt}, the height of $x$ is 
\beq
\height_{\cV}(x) = -\deg \pi_* \overline{x}^* \cV^{\vee},
\eeq
 and by Definition~\ref{def:stable-height} we have
\beq
\height_{\cV}^{\st}(x) = -\deg \overline{x}^* \cV^{\vee}.
\eeq
Our goal in this section is to study the {\em difference} between height and stable height.  To this end, we recall the natural map of vector bundles on $\cC$ 
\begin{equation}
  \label{eq:counit}
  \pi^* \pi_* \overline{x}^* \cV^{\vee} \ra \overline{x}^* \cV^{\vee}
\end{equation}
whose cokernel is a sheaf $M(\overline{x}^* \cV^{\vee})$ on $\cC$ with trivial generic fiber. This map is the counit of adjunction and we claim that it is injective. Indeed, we can check injectivity locally and assume that $C$ is affine, in which case $\pi_* \overline{x}^* \cV^{\vee} = \Gamma(\overline{x}^* \cV^{\vee})$ and the map \eqref{eq:counit} is thus the inclusion
\[
\Gamma(\overline{x}^* \cV^{\vee}) \otimes_{\mathcal{O}_C}\mathcal{O}_{\cC} \to \overline{x}^* \cV^{\vee}
\]
of global sections.

Let $C'$ be a smooth proper curve (or in the arithmetic case, $\Spec \OO_L$ for some \'{e}tale algebra $L/K$)  endowed with a finite flat surjection $p\colon C' \ra \cC$ whose degree we denote by $m$; such a $C'$ exists by Proposition \ref{prop:tuning-finite-cover}.   The sheaf $p^*  M(\overline{x}^* \cV^{\vee})$ is now a generically trivial and finitely generated sheaf on $C'$, which is to say it is a finite abelian group with the structure of an $\OO_{C'}$-module.  It follows from Proposition~\ref{proposition:degree-length-exactness}  and exactness of $p^*$ that
\begin{eqnarray*}
\log |p^* M(\overline{x}^* \cV^{\vee})| & = & \deg p^* \overline{x}^* \cV^{\vee} - \deg p^* \pi^* \pi_* \overline{x}^* \cV^{\vee} \\
& = & m(\deg \overline{x}^* \cV^{\vee} - \deg \pi^* \pi_* \overline{x}^* \cV^{\vee}) \\
& = & m(\height_{\cV}(x) - \height^{\st}_{\cV}(x)). 
\end{eqnarray*}

Now $p^*  M(\overline{x}^* \cV^{\vee})$ is a finite $\OO_{C'}$-module and as such has a canonical decomposition as a finite direct sum $\oplus _v p^*  M(\overline{x}^* \cV^{\vee})_v$, where $v$ varies over nonarchimedean places of $C'$.

\begin{definition}  With all notation as above, the \defi{local discrepancy} $\delta_{\cV;v}$ is defined as
\beq
\delta_{\cV;v}(x) = \frac{1}{m} \log |p^*  M(\overline{x}^* \cV^{\vee})_v|. 
\eeq
\end{definition}

We thus arrive at the formula
\begin{equation}
\label{eq:lsh}
\height_{\cV}(x) = \height^{\st}_{\cV}(x) + \sum_v \delta_{\cV;v}(x).
\end{equation}

One can think of the structural information imparted by \eqref{eq:lsh} as follows.  The height $\height_{\cV}$ is a non-additive function which changes under field extensions and lacks a canonical decomposition into local terms.  However, it canonically decomposes into two pieces; one of which, $\height^{\st}_{\cV}$, is additive and stable under field extensions, while the other, $\sum_v \delta_{\cV;v}$, canonically decomposes into local terms.  These good features of the summands often make it manageable to compute them individually.

\medskip

Concretely, we may think of local discrepancy as follows.  Write $K_v$ for the completion of $K$ at $v$.   Define $L_v = K_v \tensor_C C'$, so that $L_v$ is an \'{e}tale algebra over $K_v$.  We can thus write $C'_v = \Spec \OO_{L_v}$.  Choose an identification of $\overline{x}^* \cV^\vee |_{\Spec K_v}$ with $K_v^r$.  Then the generic stalk of $p^* \overline{x}^* \cV^\vee$ is identified with $L_v^r$, and we can think of $p^* \overline{x}^* \cV^\vee$ as a $C'_v$-lattice $\Lambda$ in the vector space $L_v^r$.  Then the $\OO_{K_v}$-module $\pi_* \overline{x}^* \cV^{\vee}$ is $\Lambda  \cap K_v^r$. and so $p^* \pi^* \pi_* \overline{x}^* \cV^{\vee}$ is
\beq
(\Lambda \cap K_v^r) \tensor_{\OO_{K_v}} \OO_{L_v} \subset \Lambda
\eeq
and 
\beq
\delta_{\cV;v}(x) = \frac{1}{m} \log \left| \frac{\Lambda}{(\Lambda \cap K_v^r) \tensor_{\OO_{K_v}} \OO_{L_v}} \right|.
\eeq

\begin{remark}
One particularly illustrative example is when $L_v$ is a degree $d$ Galois extension of $K_v$ with Galois group $G\subset S_d$, and $\overline{x}^* \cV^\vee$ is the $G$-representation obtained from the permutation representation of $S_d$. In this case $\Lambda$ is the $\cO_{L_v}$-module $\cO_{L_v}^{\oplus d}$ and $\sigma\in G$ acts on the $i$-th basis vector $e_i$ by $\sigma(e_i)=e_{\sigma(i)}$. Since $\Lambda$ is $G$-linearized, it follows that $\sigma(\alpha e_i)=\sigma(\alpha)e_{\sigma(i)}$ for any $\alpha\in L_v$. Said another way, $\Lambda$ is the $G$-linearized $\cO_{L_v}$-module given by the skew group ring $G\ast\cO_{L_v}$. If we label the elements of $G$ by $\sigma_1,\dots,\sigma_d\colon L_v\to L_v$, then we see $\Lambda\cap K_v^d=\Lambda^G$ is the set of sums of the form $\sum_i\sigma_i(\alpha) e_i$ with $\alpha\in L_v$. From this description, it is clear that the permutation representation is related to the discriminant. This relation will be further expanded upon in \S\ref{subsec:BG}.
\end{remark}

\begin{proposition} Let $E_v$ be an unramified extension of $K_v$ of degree $d$, let $x$ be a point of $\cX(K_v)$, and let $x_E$ be the corresponding point of $\cX(E_v)$.   Then
\beq
\delta_{\cV;v}(x_E) = d \delta_{\cV;v}(x).
\eeq
\label{pr:etaleld}
\end{proposition}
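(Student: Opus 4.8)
The plan is to exploit that an unramified extension $E_v/K_v$ of degree $d$ corresponds to a \emph{finite étale}, hence flat, morphism $\Spec\OO_{E_v}\to\Spec\OO_{K_v}$, so that the entire construction underlying $\delta_{\cV;v}$ — the tuning stack, the coarse map $\pi$, the pushforward $\pi_*$, the counit \eqref{eq:counit}, and its cokernel $M$ — commutes with this base change; the factor $d$ then comes out of an elementary length computation.

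First I would manufacture a tuning stack and a cover for $x_E$ by base change. Fix a tuning stack $\cC$ for $x$ over $C=\Spec\OO_{K_v}$, with coarse map $\pi\colon\cC\to C$ and extension $\overline{x}\colon\cC\to\cX$, and let $p\colon C'\to\cC$ be a finite flat surjection of degree $m$ from $C'=\Spec\OO_{L_v}$ (with $L_v/K_v$ étale) as in Proposition~\ref{prop:tuning-finite-cover}, so that $\delta_{\cV;v}(x)=\tfrac1m\log|p^*M(\overline{x}^*\cV^{\vee})|$. Set $C_E=\Spec\OO_{E_v}$, $\cC_E=\cC\times_C C_E$, $C'_E=C'\times_C C_E$, and let $g\colon\cC_E\to\cC$ be the projection. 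Since $E_v/K_v$ is unramified, $C_E\to C$ is finite étale; hence $\cC_E$ is again a normal Artin stack with finite diagonal (this normality step is exactly what fails in general for ramified extensions), the projection $\pi_E\colon\cC_E\to C_E$ is a birational coarse space map because coarse space formation commutes with flat base change, and $\overline{x}_E:=\overline{x}\circ g$ extends $x_E$. Thus $(\cC_E,\overline{x}_E,\pi_E)$ is a tuning stack for $x_E$, and $p_E\colon C'_E\to\cC_E$ is a finite flat surjection of degree $m$ from $\Spec\OO_{L_v\otimes_{K_v}E_v}$, a legitimate choice of cover for computing $\delta_{\cV;v}(x_E)$.

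Next I would check that $M$ base-changes. We have $\overline{x}_E^*\cV^{\vee}=g^*\overline{x}^*\cV^{\vee}$, and since $\pi_*$ commutes with the flat base change $C_E\to C$ (flat base change for quasi-coherent pushforward along the quasi-compact quasi-separated map $\pi$), while $\pi^*$ commutes with base change trivially, the counit \eqref{eq:counit} for $x_E$ is the pullback along $g$ of the counit for $x$. Taking cokernels — which commutes with the right-exact functor $g^*$ — gives $M(\overline{x}_E^*\cV^{\vee})=g^*M(\overline{x}^*\cV^{\vee})$, and pulling back once more along $C'_E\to C'$ (noting that $C'_E\to C'\to\cC$ and $C'_E\to\cC_E\to\cC$ agree) yields
\[
p_E^*M(\overline{x}_E^*\cV^{\vee})\;\cong\;\bigl(p^*M(\overline{x}^*\cV^{\vee})\bigr)\otimes_{\OO_{K_v}}\OO_{E_v}
\]
as $\OO_{K_v}$-modules. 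Writing $N=p^*M(\overline{x}^*\cV^{\vee})$, a finite $\OO_{K_v}$-module, a standard length/residue-field count — $\OO_{E_v}$ is finite free of rank $d$ over $\OO_{K_v}$, and unramifiedness preserves lengths while raising residue cardinalities to the $d$th power — gives $|N\otimes_{\OO_{K_v}}\OO_{E_v}|=|N|^d$. Therefore
\[
\delta_{\cV;v}(x_E)=\tfrac1m\log\bigl|p_E^*M(\overline{x}_E^*\cV^{\vee})\bigr|=\tfrac1m\log\!\bigl(|N|^d\bigr)=d\cdot\tfrac1m\log|N|=d\,\delta_{\cV;v}(x).
\]

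The main obstacle is the bookkeeping in the middle step: confirming that coarse space formation, the pushforward $\pi_*$, the counit, and the cokernel $M$ all genuinely commute with the base change $\OO_{K_v}\to\OO_{E_v}$. Once that is in hand the conclusion is formal. It is worth emphasizing that the hypothesis that $E_v/K_v$ is \emph{unramified} is used in two essential places: it keeps $\cC_E$ normal, so that $\cC\times_C C_E$ is already a tuning stack (no normalization needed), and it makes the residue-field/length count produce a clean factor of $d$ rather than a correction depending on ramification.
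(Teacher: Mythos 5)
Your proof is correct, and it takes a genuinely different route from the paper's. The paper works entirely at the level of the explicit lattice picture set up in \S\ref{subsec:local-stacky-heights}: it writes $\Lambda_E := \Lambda \otimes_{\OO_{K_v}} \OO_{E_v}$, observes that $\Lambda_E \cap E_v^r = (\Lambda \cap K_v^r) \otimes_{\OO_{K_v}} \OO_{E_v}$ (a $K_v$-linear-conditions argument), plugs both sides into the concrete formula for $\delta_{\cV;v}$, and finishes by invoking $\OO_{F_v} = \OO_{L_v} \otimes_{\OO_{K_v}} \OO_{E_v}$, which is exactly where unramifiedness enters. You instead go one level up and show that the entire construction — tuning stack, coarse space map, $\pi_*$, the counit, and its cokernel $M$ — commutes with the étale base change $\OO_{K_v} \to \OO_{E_v}$, reducing the statement to the elementary length identity $|N \otimes_{\OO_{K_v}} \OO_{E_v}| = |N|^d$. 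Your approach exposes the structural reason the proposition holds (flatness of the base change makes everything base-change) and makes the role of unramifiedness more transparent, at the cost of having to verify that coarse space formation and $\pi_*$ commute with flat base change, whereas the paper's approach reuses the concrete lattice description already developed and is correspondingly shorter.

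One small imprecision in your closing remarks: the length identity $|N \otimes_{\OO_{K_v}} \OO_{E_v}| = |N|^d$ does \emph{not} itself require unramifiedness — it holds whenever $\OO_{E_v}$ is finite free of rank $d$ over $\OO_{K_v}$, since then $N \otimes_{\OO_{K_v}} \OO_{E_v} \cong N^{\oplus d}$ as abelian groups — so the second of your two ``essential uses'' of unramifiedness is not really one. The genuine obstacle in the ramified case is not the count but the input to the count: $\cC \times_C C_E$ and $C' \times_C C_E$ fail to be normal, and after normalizing, the resulting map to $\cC$ is no longer flat, so $M$ no longer base-changes and the module one computes lengths of is no longer $N \otimes_{\OO_{K_v}} \OO_{E_v}$. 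The proof itself, however, is sound.
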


\begin{proof}
(This proof is essentially the same as that of the ``geometric" part of \cite[Lemma 3.4]{woodY:2015mass-I}.)  

Write $\Lambda_E$ for $(\Lambda \tensor_{\OO_{K_v}} \OO_{E_v})$.  Observe first that 
\beq
\Lambda_E \cap E_v^r = (\Lambda \cap K_v^r) \tensor_{\OO_{K_v}} \OO_{E_v}
\eeq
since the condition of being in $K^r$ is cut out by $K$-linear conditions on $L^r$ considered as a $K$-module; the same linear conditions applied to $(L \tensor_K E)^r$ cut out $E^r$.  We then get an equality
\beq
d \delta_{\cV;v}(x) = d \frac{1}{m} \log \left| \frac{\Lambda}{(\Lambda \cap K_v^r) \tensor_{\OO_{K_v}} \OO_{L_v}} \right|
= \frac{1}{m} \log \left| \frac{\Lambda_E}{(\Lambda \cap K_v^r) \tensor_{\OO_{K_v}} \OO_{L_v} \tensor_{\OO_{K_v}} \OO_{E_v}} \right|.
\eeq
On the other hand, writing $F_v$ for the etale algebra $E_v \tensor_{K_v} L_v$, we have
\beq
\delta_{\cV;v}(x_E) = \frac{1}{m} \log \left| \frac{\Lambda_E}{(\Lambda_E \cap E_v^r) \tensor_{\OO_{E_v}} \OO_{F_v}}\right|
=  \frac{1}{m} \log \left| \frac{\Lambda_E}{(\Lambda \cap K_v^r) \tensor_{\OO_{K_v}} \OO_{F_v}} \right|.
\eeq
The desired equality now follows from the fact that, since $E_v / K_v$ is unramified, we have $\OO_{F_v} = \OO_{L_v} \tensor_{\OO_{K_v}} \OO_{E_v}$.
\end{proof}

There will be some cases where our life is simpler if we can ignore a specified finite set of primes.  The following proposition is useful when we need to show this negligence does not perturb our height functions by very much.
\begin{proposition}
\label{pr:boundeddisc}
Suppose $K$ is a number field.  There is a constant $C(\cX,\cV,v)$ such that
\beq
\delta_{\cV;v}(x) \leq C(\cX,\cV,v)
\eeq
for all $x$ in $\cX(K_v)$.
\end{proposition}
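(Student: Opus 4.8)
The plan is to reduce the statement to a computation in the local ring at $v$, and then bound a certain finite length by the discriminant of a field extension whose degree is controlled by $\cX$ alone.

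First I would localize. Since $\delta_{\cV;v}(x)$ depends only on the behaviour of $\overline{x}$ near the closed point $v \in C$, I may replace $C$ by $\Spec \OO_{K_v}$ (passing, if convenient, to the henselization or completion, harmless since all rings in sight are excellent); note that $v$ is nonarchimedean and, because $K$ is a number field, $K_v$ is a nonarchimedean local field of characteristic $0$. I would compute $\delta_{\cV;v}(x)$ using the \emph{universal} tuning stack $\cC$ of $x$, for which $\overline{x}\colon \cC \to \cX$ is representable (Corollary \ref{cor:universal tuning stack iff representable}). The point of this choice is a uniform bound on inertia: $\cX$ is of finite type over the Noetherian scheme $C$ with finite diagonal, so its inertia $I_{\cX}\to\cX$ is finite, and since $\cX$ is quasi-compact the orders of its geometric automorphism groups are bounded by an integer $n = n(\cX)$; representability of $\overline{x}$ then forces the automorphism group at every point of $\cC$ to embed into one for $\cX$, hence to have order $\le n$.

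Next I would invoke the local structure of $\cC$. Over $\Spec \OO_{K_v}$ the coarse space map $\pi\colon \cC \to C$ is a homeomorphism of topological spaces, so $\cC$ has a single closed point $\xi$, with automorphism group $\Gamma$ of order $e := |\Gamma| \le n$. By the analysis of one-dimensional normal Artin stacks with finite diagonal (Appendix \ref{ss:one-dimensional-artin}), the henselization of $\cC$ at $\xi$ is $[\Spec \OO_E / \Gamma]$, where $\OO_E$ is the ring of integers of a finite extension $E/K_v$ on which $\Gamma$ acts with $E^{\Gamma} = K_v$; since $\mathrm{char}\, K_v = 0$, $E/K_v$ is separable, hence Galois with group $\Gamma$ and degree $e \le n$. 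Let $\rho\colon \Spec \OO_E \to [\Spec \OO_E / \Gamma]$ be the quotient $\Gamma$-torsor (finite flat of degree $e$), arranged to be the restriction over $\xi$ of a global finite flat cover $C' \to \cC$ as in Proposition \ref{prop:tuning-finite-cover}. Restricting $\overline{x}^* \cV^{\vee}$ along $\rho$ gives a free $\OO_E$-module $\Lambda$ of rank $r := \rank \cV$ with a semilinear $\Gamma$-action; the tuning sheaf $\pi_* \overline{x}^* \cV^{\vee}$ is the $\OO_{K_v}$-module $\Lambda^{\Gamma}$; and the counit \eqref{eq:counit}, restricted along $\rho$, is the inclusion $\OO_E \cdot \Lambda^{\Gamma} \hookrightarrow \Lambda$ (injective, as established right after \eqref{eq:counit}), with cokernel a finite $\OO_E$-module $M$. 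Unwinding the degree conventions of Definition \ref{defn:degree-length} and Proposition \ref{proposition:degree-length-exactness} then gives
\[
\delta_{\cV;v}(x) = \deg_{\cC, \xi} M = \frac{f_E}{e}\, \length_{\OO_E}(M)\, \log|k_v|,
\]
where $f_E$ is the residue degree of $E/K_v$ and $k_v$ is the residue field at $v$; even a one-sided bound $\delta_{\cV;v}(x) \le (\mathrm{const})\cdot \length_{\OO_E}(M)\cdot \log|k_v|$ would suffice for what follows.

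It remains to bound $\length_{\OO_E}(M)$, and this is the crux. Fix an $\OO_{K_v}$-basis $\omega_1, \dots, \omega_e$ of $\OO_E$, enumerate $\Gamma = \{\gamma_1, \dots, \gamma_e\}$, and set $D := \det(\gamma_i(\omega_j))_{i,j} \in \OO_E$, so that $D^2 = \disc(\OO_E / \OO_{K_v})$ (a nonzero element of $\OO_{K_v}$, by separability). Writing $(\beta_{jk})$ for the adjugate matrix, so $\sum_j \gamma_i(\omega_j)\beta_{jk} = D\,\delta_{ik}$, and $T := \sum_{\gamma \in \Gamma} \gamma \colon \Lambda \to \Lambda^{\Gamma}$, one computes for every $\mu \in \Lambda$ that
\[
\sum_j \beta_{jk}\, T(\omega_j \mu) = \sum_i \Bigl( \sum_j \gamma_i(\omega_j)\beta_{jk} \Bigr) \gamma_i(\mu) = D\, \gamma_k(\mu),
\]
using $\omega_j \mu \in \Lambda$ and $T(\Lambda) \subseteq \Lambda^{\Gamma}$. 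Taking $\gamma_k = \mathrm{id}$ and noting that the left-hand side lies in $\OO_E \cdot \Lambda^{\Gamma}$ shows $D\Lambda \subseteq \OO_E \cdot \Lambda^{\Gamma}$; hence $M$ is killed by $D$, so also by $D^2 = \disc(\OO_E/\OO_{K_v})$, so by $\pi_{K_v}^{\,w}$ with $w := v_{K_v}(\disc_{E/K_v})$. As $M$ is a quotient of $\Lambda \cong \OO_E^{\,r}$, we get $\length_{\OO_E}(M) \le r\, e_E\, w$ (with $e_E$ the ramification index), and therefore $\delta_{\cV;v}(x) \le r\, w\, \log|k_v|$. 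Finally $w$ is bounded over all $E/K_v$ with $[E:K_v] \le n$ — a nonarchimedean local field has only finitely many extensions of each bounded degree (alternatively, Serre's bound on the different gives an explicit estimate $w \le n-1 + n\, v_{K_v}(n!)$) — so
\[
C(\cX, \cV, v) := r \cdot \Bigl(\sup\{\, v_{K_v}(\disc_{E/K_v}) : [E:K_v] \le n\,\}\Bigr) \cdot \log|k_v|
\]
works, with $n = n(\cX)$ and $r = \rank \cV$. The one genuinely new ingredient is the adjugate identity; I expect everything else (the local description $[\Spec \OO_E/\Gamma]$ and the bookkeeping identifying $\delta_{\cV;v}$ with $\tfrac{f_E}{e}\length_{\OO_E}(M)\log|k_v|$) to be routine given the appendix. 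Note that the argument uses no structure theory of integral $\Gamma$-representations — which would fail to be finite when $v$ divides $|\Gamma|$ — so the bound remains uniform even under wild ramification.
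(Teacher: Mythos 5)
Your algebraic core is the same as the paper's: the paper's argument (lifted from Berhuy's proof of Hilbert 90) picks a basis $\alpha_1,\dots,\alpha_m$ of $\mathcal{O}_{L_v}$, forms the matrix $A=(\alpha_i^g)_{i,g}$, invokes Dedekind's independence to see $A\in\GL_m(L_v)$, and then observes that the entries of $A^{-1}$ lie in $C^{-1}\mathcal{O}_{L_v}$ for some fixed $C$; your adjugate identity is exactly this with the explicit choice $C=D=\det A$, which makes the bound a little cleaner. So the two proofs do not differ there.

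Where you do diverge is in the reduction, and this is where there is a genuine gap. The paper never looks inside the tuning stack: it invokes the existence of a finite scheme cover of $\mathcal{X}$ (via Rydh) to produce, for each $x$, a Galois extension $L_v/K_v$ of bounded degree over which $x$ extends to an integral point, and then runs the matrix argument with the honest Galois group $G=\Gal(L_v/K_v)$ acting on the lattice. You instead pass to the universal tuning stack $\mathcal{C}$, and assert that its henselization at $\xi$ is a quotient $[\Spec\mathcal{O}_E/\Gamma]$ in which $\Gamma$ is the stabilizer group (of order $\le n(\mathcal{X})$) and ``$E/K_v$ is separable, hence Galois with group $\Gamma$''. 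Two problems. First, the local quotient-stack description is not what Appendix~\ref{ss:one-dimensional-artin} proves; Proposition~\ref{prop:tuning-finite-cover} gives a finite flat cover by a regular scheme, but not a torsor under the stabilizer. Second, and more seriously, $\Gamma$ is a finite \emph{group scheme}, not a finite group, and it need not be constant even in the tamest situations: if $\mathcal{C}$ is a root stack of order $m$ at $\xi$ then $\Gamma\cong\mu_m$, and when $\mu_m\not\subset K_v$ the extension $E=K_v(\pi^{1/m})$ is \emph{not} Galois over $K_v$ (e.g.\ $\mathbf{Q}_5(5^{1/3})/\mathbf{Q}_5$). In that case the line ``enumerate $\Gamma=\{\gamma_1,\dots,\gamma_e\}$'' has no meaning, and the adjugate computation with $D=\det(\gamma_i(\omega_j))$ cannot be carried out. ``Separable'' does not give you Galois, and the implicit appeal to Artin's lemma requires a genuine finite group of field automorphisms. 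For non--Deligne--Mumford models (e.g.\ $B\mu_p/\mathbf{Z}_p$ at $v\mid p$) the stabilizer can even be infinitesimal, which makes the issue unavoidable rather than a convenience.

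The repair is to do what the paper does: choose a finite Galois extension $L_v/K_v$ of bounded degree (bounded using a finite scheme cover of $\mathcal{X}$, or, if you prefer your mechanism, using a degree-bounded finite flat cover $C'\to\mathcal{C}$ as in Proposition~\ref{prop:tuning-finite-cover} and then taking a Galois closure) over which $x$ lifts to an integral point, and run your adjugate identity against $G=\Gal(L_v/K_v)$ acting semilinearly on $\Lambda$. That already gives $D\Lambda\subset\mathcal{O}_{L_v}\Lambda^G$ with $D^2=\disc(\mathcal{O}_{L_v}/\mathcal{O}_{K_v})$, and the rest of your bound goes through unchanged, without any appeal to the local structure of $\mathcal{C}$. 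Your closing remark about wild ramification is then accurate: the argument bypasses integral representation theory of $G$ and survives $p\mid |G|$; the issue was never wildness but the non-constancy of the stabilizer group scheme.
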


\begin{proof}
(The following proof is adapted from a nice proof of Hilbert 90 that we learned from \cite[Lemma 3.3]{berhuy}.)

There is some constant $B$ such that every point $x \in \cX(K_v)$ extends to an integral point of $\cX(L_v)$ for some finite Galois extension $L$ of $K$ of degree at most $B$; this follows from the fact that $\cX$ has a finite cover by a scheme, see 
\cite[Theorem B]{rydh-noetherian-approximation}. Since $K$ is a number field, there are only finitely many isomorphism classes of extensions of $K_v$ of degree at most $B$.  We may thus prove the required bound for a single choice of $L_v$.

Write $G$ for $\Gal(L/K)$.  Write $\alpha_1, \ldots, \alpha_m$ for a subset of $\OO_{L_v}$ which freely spans $\OO_{L_v}$ as an $\OO_{K_v}$-module.   Let $\lambda$ be an element of $\Lambda$, and for each $i$ in $1,\ldots,m$ define
\beq
\lambda_i = \sum_{g \in G} (\alpha_i \lambda)^g.
\eeq
The action of $G$ permutes the summands above, so $\lambda_i$ is fixed by $G$ and thus lies in $\Lambda \cap K_v^r$.

We can also write
\begin{equation}
\lambda_i = \sum_{g \in G} (\alpha_i^g)(\lambda^g).
\label{eq:lambdai}
\end{equation}
Write $A$ for the $m \times m$ matrix in with rows indexed by $\alpha_1, \ldots, \alpha_m$ and columns by the elements of $G$; by Dedekind's lemma this matrix lies in $\GL_m(L_v)$. Write $\overrightarrow{\lambda}$ for the vector $\lambda_1, \ldots, \lambda_m \in L_v^m$, and $\overrightarrow{\mu}$ for the vector whose entries are $\set{\lambda^g}_{g \in G}$.  With this notation,\eqref{eq:lambdai} becomes
\beq
\overrightarrow{\lambda} = A \overrightarrow{\mu}
\eeq
which we can rewrite as
\beq
\overrightarrow{\mu} = A^{-1} \overrightarrow{\lambda}.
\eeq
In particular, we can write
\begin{equation}
\lambda = \sum a_i \lambda_i
\label{eq:lambdasum}
\end{equation}
where $a_i$ are entries of $A^{-1}$.  But note that $A$ depends only on the choice of $\alpha_i$; in particular, there is some constant $C$ such that the entries of $A^{-1}$ lie in $C^{-1} \OO_{L_v}$.  Thus, \eqref{eq:lambdasum} expresses an arbitrary $\lambda \in \Lambda$ as a linear combination of the $\lambda_i$, which lie in $\Lambda \cap K_v^r$, with coefficients in $C^{-1} \OO_{L_v}$.  We conclude that
\beq
\Lambda \subset C^{-1}[(\Lambda \cap K_v^r) \tensor_{\OO_{K_v}} \OO_{L_v}]
\eeq
which provides a bound for 
\beq
\left| \frac{\Lambda}{(\Lambda \cap K_v^r) \tensor_{\OO_{K_v}} \OO_{L_v}} \right|
\eeq
depending only on $L_v$, as required.
\end{proof}

We note that Proposition~\ref{pr:boundeddisc} does {\em not} hold in general when $K$ has characteristic $p$.  For instance, we will see that the local discrepancy at $v$ for a point of $BG$, with $\cV$ the regular representation of $G$, computes the discriminant of the local extension:  but we know that the discriminant of a $\Z/p\Z$-extension of $\FF_p((t))$ can be arbitrarily large, by contrast with the discriminant of a $\Z/p\Z$-extension of $\Q_p$.

\subsection{Computing heights:  line bundles on $\cX$ with globally generated powers}
\label{subsec:computing-heights}

In this section, we consider the special case where $\cV$ is a line bundle $\cL$.  It turns out that, speaking loosely, if some tensor power $\cL^m$ has ``enough sections," we can use these sections to compute heights of rational points on $\cX$ with little explicit reference to stacks.  (Whether this is a virtue depends on the reader's taste.)

Suppose $\cX$ is a stack over $C$, $\L$ is a metrized line bundle on $\cX$, and $s_1, \ldots, s_k$ are sections of $\L$.  We say $\L$ is \defi{generically globally generated} by $s_1, \ldots, s_k$ if the cokernel $\F$ of the corresponding morphism of sheaves
\beq
\OO_{\cX}^{\oplus k} \ra \cL
\eeq
vanishes over the generic point of $C$.  In particular, this implies that $\F$ is supported at finitely many places $v$ of $C$.  More specifically; for each nonarchimedean $v$ with uniformizer $\pi_v \in \OO_{C_v}$,  there is an integer $m_v$ such that the restriction of $\F$ to $\cX \times_C \OO_{C_v}$ is killed by $\pi_v^{m_v}$ (since $\cX$ is finite type, it suffices to check this on a finite flat cover).
In the case where $C$ has no archimedean places, we say $\L$ is globally generated by $s_1, \ldots, s_k$ when the map from $\OO_{\cX}^{\oplus k}$ to $\cL$ is surjective.  We write $q_v$ for the order of the residue field at $v$, if $v$ is a non-archimedean place; when $v$ is archimedean we can take $q_v = e$.

\begin{proposition}
\label{pr:heightcompute}
  Suppose $\cX$ is a stack, and suppose $\L$ is a metrized line bundle on $\cX$ such that some power $\L^n$ is generically globally generated by sections $s_1, \ldots, s_k$.  Let $K$ be a global field and let $x\colon \Spec K \ra \cX$ be a point of $\cX(K)$.  Choose an identification of $x^*\cL$ (whence also $x^* \cL^n$) with $K$, and write $x_1, \ldots, x_k$ for the pullbacks of $s_1, \ldots, s_k$ by $x$.  Then
\beq
\height_{\L}(x) = \sum_{v} \left\lceil (1/n) \log_{q_v}  \max(|x_1|_v, \ldots, |x_k|_v) \right\rceil \log q_v + E(x) 
\label{eq:ggheightgeneral}
\eeq
where $E(x)$ is a function bounded above and below on $\cX(K)$.  When $C$ has no archimedean places and $\L^n$ is globally generated by $s_1, \ldots, s_k$ we have
\beq
\height_{\L}(x) = \sum_{v} \left\lceil (1/n) \log_{q_v}  \max(|x_1|_v, \ldots, |x_k|_v) \right\rceil \log q_v
\label{eq:ggheightexact}
\eeq
exactly.

\end{proposition}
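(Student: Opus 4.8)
The plan is to work from the decomposition \eqref{eq:lsh}, $\height_\cL(x) = \height^{\st}_\cL(x) + \sum_v \delta_{\cL;v}(x)$, which holds for any tuning stack $(\cC,\overline{x},\pi)$, and to analyze the two summands place by place on $C$. Fix the identification $x^*\cL\cong K$; then each $\overline{x}^*s_i$ is a section of $\overline{x}^*\cL^n$ whose restriction to $\Spec K\subseteq\cC$ is $x_i\in x^*\cL^n = K$. Since $\cL^n$ is generically globally generated, the cokernel $\F$ of $\OO_\cX^{\oplus k}\to\cL^n$ is supported on a proper closed substack of $\cX$ lying over a finite set $T\subsetneq C$ of closed points; put $S = T\cup\{\text{archimedean places}\}$, a finite set independent of $x$. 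Because $\supp\F$ is vertical over $C$, the point $x$ does not factor through it, so $\max_i|x_i|_v>0$ at every $v$; in particular the right-hand sides of \eqref{eq:ggheightgeneral} and \eqref{eq:ggheightexact} make sense, and I am reduced to a place-by-place comparison.

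For the stable height I would use $\height^{\st}_\cL(x) = \deg_\cC\overline{x}^*\cL = \tfrac1n\deg_\cC\overline{x}^*\cL^n$ (Definition~\ref{def:stable-height} and functoriality of degree, Lemma~\ref{lemma:functorality-of-degree}). At a nonarchimedean $v\notin S$, the $\overline{x}^*s_i$ globally generate $\overline{x}^*\cL^n$ over $\cC\times_C\Spec\OO_{C_v}$; by the structure theory of one-dimensional normal Artin stacks with finite diagonal (Appendix~\ref{ss:one-dimensional-artin}) a globally generated line bundle on such a stack must have trivial weight at the stacky points, so $\overline{x}^*\cL^n$ descends to a line bundle on $C$ near $v$, globally generated there by sections with generic values $x_1,\dots,x_k$; hence the local contribution of $\deg_\cC\overline{x}^*\cL^n$ at $v$ is the classical $\log_{q_v}\max_i|x_i|_v\cdot\log q_v$. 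Summing, $\height^{\st}_\cL(x) = \tfrac1n\sum_v\log_{q_v}\max_i|x_i|_v\,\log q_v + E_1(x)$, where $E_1$ gathers the $v\in S$: at nonarchimedean such $v$ the discrepancy is bounded because $\F$ is killed by a fixed power of the maximal ideal at $v$, and at archimedean $v$ it is bounded because the metric on $\cL$ is fixed and $\cX(K_v)$ is compact (so $\max_i\|s_i\|_v$ is bounded above, and, since $\supp\F$ is vertical, bounded below).

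For the local discrepancies the key claim is that at a nonarchimedean $v\notin S$ one has exactly $\delta_{\cL;v}(x) = \big(\lceil\tfrac1n\log_{q_v}\max_i|x_i|_v\rceil - \tfrac1n\log_{q_v}\max_i|x_i|_v\big)\log q_v$, a nonnegative quantity in $[0,\log q_v)$. To prove this I would pass to a finite flat cover of $\cC\times_C\Spec\OO_{C_v}$ by a semilocal Dedekind scheme (Proposition~\ref{prop:tuning-finite-cover}), turning the cokernel of \eqref{eq:counit} into a classical lattice index, and invoke the lattice description of $\pi_*$ from \S\ref{subsec:local-stacky-heights} together with Lemma~\ref{lemma:functorality-of-degree}; the point is that, although $\overline{x}^*\cL^n$ is globally generated at $v$, $\overline{x}^*\cL$ itself generally is not, and the defect is entirely concentrated at the stacky point, depending only on its order $r$ and the weight $j$ of $\overline{x}^*\cL$ there. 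The divisibility $r\mid nj$ forced by global generation of $\overline{x}^*\cL^n$ is precisely what is needed for this local defect to combine with the classical contribution above into the single ceiling. At the finitely many $v\in S$, $\delta_{\cL;v}(x)$ is bounded uniformly in $x$ by Proposition~\ref{pr:boundeddisc} (and is trivially controlled at archimedean places, where passing to the ceiling changes the value by less than $1$). Adding this to the stable-height display and collecting all the finitely many uniformly bounded contributions into one function $E(x)$ yields \eqref{eq:ggheightgeneral}; and if in addition $C$ has no archimedean places and $\cL^n$ is globally generated on $\cX$, then $\F=0$, so $S=\emptyset$, every place is good, both $E_1$ and the bad-place corrections vanish, and we obtain \eqref{eq:ggheightexact} on the nose.

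The hard part is the good-place computation of $\delta_{\cL;v}$: verifying triviality of the weight of a globally generated line bundle on the (possibly Artin, possibly wildly stacky) local model of $\cC$ at $v$, identifying the exact local defect caused by the failure of $\overline{x}^*\cL$ to be globally generated, and checking that it combines with the classical height of $(x_1,\dots,x_k)$ to give exactly $\lceil\tfrac1n\log_{q_v}\max_i|x_i|_v\rceil$; legitimizing the reduction to a finite cover by a scheme inside this computation is the accompanying technical point. Once the exact local identity is in hand, the stable-height piece and the bad-place and archimedean estimates are routine, and the same local analysis recurs in the weighted-projective and symmetric-power examples of Section~\ref{sec:examples}.
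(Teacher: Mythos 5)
Your proposal follows the paper's proof in its decomposition $\height_\cL(x) = \height^{\st}_\cL(x) + \sum_v\delta_{\cL;v}(x)$ and in its discrepancy computation (the lattice argument of \S\ref{subsec:local-stacky-heights}), and the final assembly is correct. The one substantive difference is the stable-height step. The paper pulls $\overline{x}^*\cL^n$ up to a scheme cover $C'\to\cC$, computes its degree there with the single section $s_1$, and absorbs both the cokernel of the generating map and the archimedean metric choice into the $O_{\XX(K)}(1)$ error term; you push down instead, arguing that at a good place $v$ global generation of $\overline{x}^*\cL^n$ on the local stacky curve $\cC_v$ forces the inertial weight there to vanish (global sections live in the invariants of the stabilizer action, so they can only surject onto the fiber if the fiber carries the trivial representation), hence $\overline{x}^*\cL^n$ descends to $C$ near $v$ and the classical globally-generated-degree formula applies place by place. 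Both routes are correct and yield the same intermediate expression for $\height^{\st}_\cL(x)$. Your route usefully surfaces the weight-triviality consequence of global generation, which the paper leaves implicit when it writes $\overline{x}^*\cL^\vee$ as the $n$-th inverse root $I^{-1/n}$ of the lattice $I$; the paper's route is shorter, avoids having to decompose $\deg_\cC$ into local contributions, and, because the lattice-in-$L_v$ computation is agnostic to the structure of the local stabilizer, sidesteps the delicate wild and non-Deligne--Mumford local models of $\cC_v$ that you correctly flag as the technical crux of your descent step.
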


From now on we denote a bounded function on $\XX(K)$ by  $O_{\XX(K)}(1)$.
Note that, when $K=\Q$, we may take $x_1, \ldots, x_k$ to be integers, with the property that, for every $p$, there is some $x_i$ which is not a multiple of $p^n$.  We say such a tuple $(x_1, \ldots, x_k) \in \Z^k$ is in \defi{minimal form}.  Suppose $(x_1, \ldots, x_k)$ corresponds to a point $x$ of $\cX(\Q)$ as in Proposition~\ref{pr:heightcompute}.  The hypothesis of minimal form implies that the non-archimedean contributions all vanish, and we are left with
\begin{equation}
\height_{\L}(x) = (1/n) \log \max_i |x_i|_{\R} + O_{\XX(K)}(1)
\label{eq:heightqgg}
\end{equation}
up to a function bounded on $\cX(\Q)$.  (The ceiling function can now be neglected, since, having restricted to a single summand, the difference between a number and its floor is bounded and can be absorbed into the error term.)

We now prove Proposition~\ref{pr:heightcompute}.

\begin{proof}
 
We note, first of all, that we have not specified the choice of metric on $\cL$ at archimedean places, but this choice can be absorbed in the error term; if $\cL$ and $\cL'$ are line bundles which differ only with respect to the archimedean metric, it is easy to see from the proof below that $\height_{\cL}- \height_{\cL'} = O_{\XX(K)}(1)$.  (At the moment when we say ``Fubini-Study metric on $\OO(1)$ on complex projective space," just insert your own favorite metric, which differs from Fubini-Study by a bounded function.)

We begin by computing the degree of $\overline{x}^* \cL^n$ on $\cC$.  Let $L/K$ be a finite extension of some degree $m$ such that the pullback of $x$ to $\Spec L$ extends to a morphism $y\colon C' \ra \cX$, where $C'$ is the curve (or Dedekind domain) with fraction field $L$.  We then have a commutative diagram: 
 \[
\xymatrix{
\spec L\ar[r]^\iota\ar[d]^\phi & C'\ar[r]^-{\overline{y}}\ar[d]^p & X\ar[d]^q\\
\spec K\ar[r]\ar@/^1pc/[rr]^{x} & \cC\ar[r]^-{\overline{x}}\ar[d]^\pi & \X \\
& C & 
}
\]
 
Now $\deg_{\cC} \overline{x}^* \cL^n = (1/m) \deg_{C'} p^* \overline{x}^* \L^n$.  The latter is a metrized line bundle on $\OO_L$ whose degree we can compute by means of a section.  For ease of notation, write $\Lambda$ for $p^* \overline{x}^* \L^n$.
\beq
\deg_{C'} p^* \overline{x}^* \L^n = \log |\Lambda/ s_1 \OO_L| - \sum_{\sigma \colon  L \ra \C} |\sigma^* s_1|_\sigma.
\eeq
Write $\Lambda'$ for the submodule of $\Lambda$ spanned by $s_1, \ldots, s_k$.  By hypothesis, there is a bound independent of $x$ for the size of $\Lambda / \Lambda'$.  Thus, we may replace $\Lambda$ with $\Lambda'$ and get
\beq
\deg_{C'} p^* \overline{x}^* \L^n = \log |\Lambda'/ s_1 \OO_L| - \sum_{\sigma \colon L \ra \C}  |\sigma^* s_1|_\sigma + O_{\XX(K)}(1).
\eeq

Now the torsion $\OO_L$-module $\Lambda'/ s_1 \OO_L$ can be broken up into $v$-adic components $T_v$, one for each nonarchimedean place $v$ of $K$, and by the explicit description of $\Lambda'$ we have 
\beq
\log |T_v| = m (\log \max_i |x_i|_v - \log |x_1|_v).
\eeq
Thus we have
\beq
\log |\Lambda'/ s_1 \OO_L| = \sum_{v \nmid \infty} m(\log \max_i |x_i|_v - \log |x_1|_v).
\eeq
We now turn to the archimedean places, which requires us to specify the metric on $\cL^n$.  The sections $s_1, \ldots, s_n$ provide a map of complex manifolds $f\colon \cX(\C) \ra \P^{k-1}(\C)$, and $\cL^n|\XX(\C)$ is pulled back from $\OO(1)$ under $f$.  So we may choose for our metric on $\cL^n|\XX(\C)$ the pullback of the Fubini-Study metric on $\OO(1)$.  Having done so, we have
\beq
\sum_{\sigma\colon L \ra \C}  |\sigma^* s_1|_\sigma = \sum_{v | \infty} m (\log|x_1|_v - \log \max_i |x_i|_v ) + O_{\XX(K)}(1).
\eeq

To sum up, we have computed that
\beq
\log |\Lambda'/ s_1 \OO_L| - \sum_{\sigma\colon L \ra C}  |\sigma^* s_1|_\sigma = - \sum_v m |x_i|_v + \sum_v m \log \max_i |x_i|_v  = \sum_v \log \max_i |x_i|_v  + O_{\XX(K)}(1).
\eeq
whence 
\beq
\deg_{C'} p^* \overline{x}^* \L^n = (\sum_v m\log \max_i |x_i|_v) + O_{\XX(K)}(1)
\eeq
whence
\beq
\deg_{C} \overline{x}^* \L^\vee = -(1/n) (\sum_v \log \max_i |x_i|_v)  + O_{\XX(K)}(1).
\eeq

We note that, in the case where $K$ is a function field and $s_1,\ldots,s_k$ globally generate $\cL^n$, the expression $(\sum_v m\log \max_i |x_i|_v)$ is just the usual expression for the degree of a line bundle pulled back from $\OO(1)$ on $\P^{k-1}$ by a morphism with coordinates $(x_1: \ldots : x_k)$.

Having computed this degree, which is the negative of the stable height $\height^{\st}_{\cL}(x)$, we can compute $\height_{\cL}(x)$ by adding local discrepancies as in the previous section.  First of all, if $v$ is one of the finitely many nonarchimedean places where $\cL^n$ is not generated by $s_1, \ldots, s_k$, we observe that $\delta_{\cL;v}(x)$ is $O_{\XX(K)}(1)$ by Proposition~\ref{pr:boundeddisc}, and since the number of such places is bounded independently of $x$, we can absorb the contribution of those local discrepancies $\delta_{\cL;v}(x)$ into the error term.

So let $v$ be a nonarchimedean place where $\cL^n$ is generated by $s_1, \ldots, s_k$.  Then, given our choice of identification of $x^* \cL^n$ with $K$, and writing $L_v$ for the etale algebra $L \tensor_K K_v$, we can write $\overline{x}^* \cL^n$ as the Galois-stable lattice $I$ in $L_v$ spanned as an $\OO_{L_v}$-module by $x_1, \ldots, x_k$.  Then $\overline{x}^* \cL^\vee$ is the submodule $I^{-1/n}$ of $L_v$ consisting of all $\alpha \in L_v$ such that $\alpha^n I \subset \OO{L_v}$.  The pushforward $\pi_* \overline{x}^* \cL^\vee$ is then the submodule $I^{-1/n} \cap K_v$ of $K_v$ consisting of all $\beta \in K_v$ with $\beta^n x_i \subset \OO_{K_v}$ for all $i$, which is to say it is the fractional ideal $m_v^{c_v}$ where
\beq
c_v = \lceil -(1/n)\min_i \ord_v x_i \rceil = \lceil (1/n) \log_{q_v} \max_i |x_i|_v \rceil.
\eeq
So
\beq
\delta_{\cL;v}(x) = (1/m) \log |I^{-1/n} / I^{-1/n} \cap K| = (\log q_v) \lceil (1/n) \log_{q_v} \max_i |x_i|_v \rceil - (1/n) \log \max_i |x_i|_v. 
\eeq
Recalling from above that
\beq
\height_{\cL}^{\st}(x) = (1/n) \sum_v \log \max_i |x_i|_v  + O_{\XX(K)}(1)
\eeq
we conclude that
\beq
\height_{\cL}(x) = \height^{\st}(x) + \sum_v \delta_{\cL;v}(x) = \sum_v \lceil (1/n) \log_{q_v} \max_i |x_i|_v \rceil \log q_v+ O_{\XX(K)}(1)
\eeq
which was the desired result.
\end{proof}

\section{Examples}
\label{sec:examples}

In this section we show how to compute heights of points on various stacks that often arise in practice, emphasizing the fact that in these cases the output of our definition often recovers an invariant which was already widely used to measure the ``size" of the objects parametrized by rational points on those stacks.

\subsection{Heights on $BG$}
\label{subsec:BG}

Let $G$ be a constant finite group scheme over $C$,  let $\cX$ be the classifying stack $BG/C$, and let $q\colon C \ra BG$ be the universal $G$-cover.  Let $x\colon \Spec K \ra \cX$ be a rational point and let $\overline{x}\colon \cC \ra BG$ be the extension of $x$ to a tuning stack.  Then we have a commutative diagram

\[
\xymatrix{
C' \ar[r]^{x_{C'}}\ar[d]^p & C\ar[d]^q \\
\cC \ar[r]^{\overline{x}}  & BG \\
}
\]
 where $C'$ is a smooth proper curve (not necessarily irreducible) whose fiber over $\Spec K$ is an \'{e}tale $G$-algebra $L/K$.

Let $\cV$ be a vector bundle of rank $r$ on $BG$; in other words, $\cV$ is an $r$-dimensional representation $V$ of $G$ over $C$.  Then, by \eqref{eq:lsh}, we have
\beq
\height_{\cV}(x) = \height^{\st}_{\cV}(x) + \sum_v \delta_{\cV;v}(x).
\eeq

First of all, note that $p^* \overline{x}^* \cV^\vee = x_{C'}^* q^* \cV$ is a vector bundle on $C'$ pulled back from the trivial bundle on $C$, and thus has degree $0$.  So
\beq
\height^{\st}_\cV(x) = -\deg  \overline{x}^* \cV^{\vee} = - (\deg p)^{-1} \deg p^* \overline{x}^* \cV^{\vee} = 0.
\eeq

We have thus reduced ourselves to the local problem of computing $\delta_{\cV;v}(x)$ at the finite set of non-archimedean places $v$ of $K$ where $L/K$ is ramified.  Let $v$ be such a place.

The pullback of $\cV^\vee$ along $x_{C'}^*$ from $C$ to $C'$ is $\OO_{C'} \tensor_{\OO_C} V^{\vee}$.

Thus, locally, the $G$-stable lattice $\Lambda_v \subset L_v^r$ we use to compute the local discrepancy can be written as
\beq
\OO_{L_v} \tensor_{\OO_{K_v}} V^\vee.
\eeq

We note that this is precisely the $G$-module studied by Yasuda and Wood in section 3 of \cite{woodY:2015mass-I}.  (The free rank $r$ $\OO_{L_v}$-module we call $\Lambda_v$ is identified with $\OO_{L_v}^r$ in their notation.)  In particular, the free rank $r$ $\OO_{K_v}$-module $\Lambda^G$ is precisely the {\em tuning submodule} in \cite[Def 3.1]{woodY:2015mass-I}, and the local discrepancy $\delta_{\cV;v}(x)$ is exactly the quantity denoted $\mathbf{v}_\tau(\rho)$ in \cite[Def 3.3]{woodY:2015mass-I}.  Thus, we can make use of their results to compute the local discrepancies explicitly.

The case where $\cV$ is a permutation representation is an important example; in this case, we find that the discriminant of a field extension can be understood as a height on $BG$ in the sense of this paper.  In particular: when $\cV$ is a degree-$n$ permutation representation of $G$, and $x$ is a point of $BG(K)$, we can associate to $x$ a map
\beq
\rho_x\colon \Gal(K) \ra G \ra S_n
\eeq
which in turn specifies a degree-$n$ \'{e}tale algebra $L/K$.

\begin{proposition}
\label{prop:heightdisc}
 Let $\cV$ be a vector bundle on $BG$ corresponding to a degree-$n$ permutation representation $\rho$ of $G$, let $x$ be a point in $BG(K)$, and let $L/K$ be the algebra corresponding to $x$ as described above.  Then
\begin{equation}
\label{eq:heightdisc}
\height_{\cV}(x) = (1/2) \log |\Delta_{L/K}|.
\end{equation}
\label{pr:heightdisc}
\end{proposition}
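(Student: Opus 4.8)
\emph{Plan.} The strategy is to identify the tuning sheaf $\pi_*\overline{x}^*\cV^{\vee}$ explicitly with the (metrized) $\OO_C$-module $\OO_L$ — the ring of integers of the degree-$n$ \'etale algebra $L/K$ attached to $x$ — and then to invoke the classical formula computing the degree of $\OO_L$. So the first step is a descent computation along the $G$-torsor $p\colon C'=\cC\times_{BG}C\to\cC$ from the diagram above, obtained by pulling back the universal $G$-cover $q\colon C\to BG$ along $\overline{x}$. Since $p$ is finite and representable and $\cC$ is normal with coarse space the Dedekind scheme $C$, the total space $C'$ is a normal scheme, finite over $C$, whose generic fibre is the $G$-torsor $\Spec\widetilde L$ attached to $x$; hence $C'$ is the normalization of $C$ in the \'etale $G$-algebra $\widetilde L$. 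Because $V^{\vee}$ is again a permutation representation (on the dual basis), we have $p^*\overline{x}^*\cV^{\vee}\cong\OO_{C'}\otimes_{\OO_C}V^{\vee}\cong\OO_{C'}^{\oplus n}$ with $G$ acting diagonally, by deck transformations on $\OO_{C'}$ and by the given permutation on the $n$ summands. Writing $x_{C'}=\pi\circ p\colon C'\to C$, and using that $p^*$ is an equivalence onto $G$-equivariant sheaves with quasi-inverse $\G\mapsto(p_*\G)^G$, together with the fact that $\pi_*$ (being left exact) commutes with $(-)^G$, I obtain
\[
\pi_*\overline{x}^*\cV^{\vee}\;\cong\;\bigl(x_{C',*}\,p^*\overline{x}^*\cV^{\vee}\bigr)^G\;\cong\;\bigl(\OO_{\widetilde L}^{\oplus n}\bigr)^G,
\]
the invariants for the diagonal $G$-action.

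The second step is to compute these invariants. Choosing a representative $i$ in each $G$-orbit $O$ of $\{1,\dots,n\}$, with stabilizer $H_O\le G$, a short calculation identifies $\bigl(\OO_{\widetilde L}^{\oplus n}\bigr)^G$ with $\bigoplus_O\OO_{\widetilde L}^{H_O}=\bigoplus_O\OO_{\widetilde L^{H_O}}=\OO_L$, the maximal order of $L=\prod_O\widetilde L^{H_O}$. This is precisely the point where normality of $\cC$ (hence of $C'$, hence the fact that $\OO_{\widetilde L}$ is the \emph{full} ring of integers of $\widetilde L$) is used: it forces the tuning sheaf to be the maximal order rather than some sub-order, which is exactly what makes the discriminant come out on the nose.

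The third step evaluates $\height_\cV(x)=-\deg\det\bigl(\pi_*\overline{x}^*\cV^{\vee}\bigr)=-\deg\det\OO_L$. In the function field case this is immediate: $\OO_L$ is the pushforward of the structure sheaf along the finite map $\Spec\OO_L\to C$, and $\deg\det$ of such a pushforward equals $-\tfrac12$ times the degree of the discriminant (equivalently, ramification) divisor, i.e.\ $-\tfrac12\disc(L/K)$. In the number field case one must additionally check that the standard (permutation-basis-orthonormal) metric on $\cV^{\vee}$ pulls back, on $x^*\cV^{\vee}\cong L$ and its archimedean completions, to the Minkowski metric under the canonical bijection between $\{1,\dots,n\}$ and $\hom_K(L,\overline{K})$; so $\pi_*\overline{x}^*\cV^{\vee}$ is $\OO_L$ with its Minkowski metric, whose Arakelov degree relative to $\OO_C$ is $-\tfrac12\log|\Delta_{L/K}|$ by the standard computation of the degree of a ring of integers. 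Either way, $\height_\cV(x)=\tfrac12\log|\Delta_{L/K}|$.

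The main obstacle is entirely in the first two steps: carrying out the torsor descent over the Artin stack $\cC$ rather than a scheme, and pinning down the tuning sheaf as the maximal order $\OO_L$ (which is where normality of the tuning stack earns its keep). After that the result is just the classical discriminant-of-the-ring-of-integers computation, with only the factor $\tfrac12$, and arithmetically the normalization of the Arakelov metric, to keep track of. As a consistency check one can also argue locally: we showed above that $\height^{\st}_\cV(x)=0$, so $\height_\cV(x)=\sum_v\delta_{\cV;v}(x)$ by \eqref{eq:lsh}, and for a permutation representation $\delta_{\cV;v}$ is exactly the Wood--Yasuda quantity $\mathbf{v}_\tau(\rho)$, whose value on permutation representations is $\tfrac12$ of the local discriminant exponent at $v$.
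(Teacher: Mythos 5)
Your argument is correct, but it takes a genuinely different route from the paper. The paper's proof is short and local: it observes $\height^{\st}_\cV(x)=0$, applies the decomposition $\height_\cV(x)=\sum_v\delta_{\cV;v}(x)$ from \eqref{eq:lsh}, and then simply cites Wood--Yasuda's Theorem 4.8 to identify $\delta_{\cV;v}(x)$ with $(1/2)a_v(\rho_x)$, one half the local Artin conductor. Your proof instead computes the tuning sheaf $\pi_*\overline{x}^*\cV^\vee$ outright: descending along the $G$-torsor $C'\to\cC$, pushing forward to $C$, and identifying $(\OO_{\widetilde L}^{\oplus n})^G$ with the maximal order $\OO_L=\bigoplus_O\OO_{\widetilde L^{H_O}}$ via the orbit decomposition, whence the height is read off the classical formula $\deg\det\OO_L=-\tfrac12\log|\Delta_{L/K}|$. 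What your route buys: it avoids the Wood--Yasuda black box and delivers the sharper statement that the tuning sheaf \emph{is} the Minkowski lattice $\OO_L$, something the paper asserts without proof only in a remark; what the paper's route buys is brevity and the fact that Wood--Yasuda's results apply uniformly to non-permutation representations, which the paper exploits immediately afterward. Two small precision points worth addressing: (i) your descent equivalence $\QCoh(\cC)\simeq\QCoh^G(C')$ with quasi-inverse $(p_*-)^G$ needs $C'$ to be a scheme, which holds exactly when $\overline{x}$ is representable, i.e.\ when $\cC$ is the \emph{universal} tuning stack (Corollary \ref{cor:universal tuning stack iff representable}) — your stated justification (``$p$ finite and representable, $\cC$ normal'') is not by itself enough, since $\cC$ is a stack, so you should invoke Proposition \ref{prop:ht-indep-of-tuning-stack} to reduce to that case; (ii) the commutation $\pi_*((-)^G)=(\pi_*(-))^G$ holds since $(-)^G$ is a finite limit and $\pi_*$ is a right adjoint, and the comparison of the orthonormal permutation metric on $\cV^\vee$ with the Minkowski metric is exactly the computation carried out in Appendix \ref{ss:BG-arakelov}.
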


\begin{proof}

It follows from \cite[Theorem 4.8]{woodY:2015mass-I} that
\beq
\delta_{\cV;v}(x) = (1/2) a_v(\rho_x)
\eeq
where $a_v$ is the Artin conductor of $\rho_x|_{K_v}$, which is precisely the local component at $v$ of $\Delta_{L/K}$.  Thus,
\begin{equation}
\label{eq:heightdisc}
\height_{\cV}(x) = \sum_v \delta_{\cV;v}(x) = (1/2) \log |\Delta_{L/K}|
\end{equation}
where by $|\Delta_{L/K}|$ we mean the absolute norm of the discriminant, i.e., the order of the finite group $\OO_C / \Delta_{L/K}$.
\end{proof}

In other words, the general definition of height introduced here, when applied to a $G$-extension (thought of as a point of $BS_n$), recovers the discriminant.  Of course, a point of $BS_n$ can be thought of as a $G$-extension in different ways; one might have in mind a degree-$n$ extension, the Galois $S_n$-extension obtained by applying Galois closure, or some other number field with the same Galois closure.  Each such field corresponds to a permutation representation of $S_n$ (in the first and second case above, the standard representation and the regular representation) and the discriminant of the field is computed by the height with respect to the vector bundle $\cV$ specified by the corresponding permutation representation.

The case $\cX = BG$ demonstrates the necessity of computing heights with respect to vector bundles of arbitrary rank, not only line bundles.  Line bundles on $BG$ correspond to $1$-dimensional representations of $G$.  If, for example, $G$ is a finite group with trivial abelianization, there are no nontrivial line bundles at all.  In order to have a theory of heights rich enough to capture the invariants of $G$-extensions, we have no alternative than to consider vector bundles of higher rank on $BG$.

The work of Yasuda and Wood is not limited to permutation representations.  For example, Wood and Yasuda work out in \cite[Example 4.10]{woodY:2015mass-I} the example where $G = \Z/p\Z$, $K$ is a function field of characteristic $p$, and $\cV$ is the $2$-dimensional non-semisimple representation of $\Z/p\Z$ over $K$.  A rational point of $BG$ corresponds to a $\Z/p\Z$-extension $L/K$.  If $v$ is a place of $K$, we denote by $j_v$ the largest integer $i$ such that the higher ramification group $G_i$ at $v$ surjects onto $\Z/p\Z$.  Then Yasuda and Wood's computation shows
\begin{equation}
\height_v(x) = 1 + \left\lfloor \frac{j_v}{p} \right\rfloor.
\label{eq:localas}
\end{equation}
When $K = \FF_q(t)$ with $q$ a power of $p$, the points of $B(\Z/p\Z)(K)$ correspond to Artin--Schreier curves, and the height of an Artin--Schreier curve with respect to this $\cV$ is the sum of the local terms \eqref{eq:localas} over all places $v$ of $\FF_q(t)$ which are ramified in the Artin--Schreier cover.   We do not know if this notion of height of an Artin--Schreier curve corresponds to anything that has appeared in previous literature, but we note that the expression above is closely related to that appearing in the computation of dimensions of irreducible components of moduli space for Artin--Schreier curves of specified $p$-rank in the work of Pries and Zhu \cite[Theorem 1.1]{prieszhu}.

This example also illustrates the important point that the height function $\height_{\cV}$ is {\em not} determined by the class of $\cV$ in $K_0$ of the category of vector bundles; the vector bundle above is an extension of the trivial line bundle by the trivial line bundle, but its associated height function is not zero.\footnote{This is specifically due to the fact that $B(\Z/p\Z)$ is not a tame stack over $\FF_q(t)$, so $\pi_*$ is not exact. Although $\overline{x}^*\cV^\vee$ is an extension of $\O_\cC$ by itself, $\pi_*\overline{x}^*\cV^\vee$ is no longer the extension of $\O_C$ by itself.}

\subsection{Heights on $B\mu_n$}
\label{subsection:the-ballad-of-Bmu-n}

Suppose $\cX = B\mu_n$, and $\cL$ is the line bundle on $B\mu_n$ corresponding to the standard $1$-dimensional representation $\mu_n \ra \GG_m$.  In that case, $\cL^n$ is the trivial bundle on $\cX$ and thus admits a generating section $s$.  On the other hand, if $x$ is a $K$-point of $B\mu_n$, the pullback $x^* \cL$ is isomorphic to $K$.  The obstruction to $x^*s \in \Gamma(\Spec K, x^* \cL^n)$ being an $n$th power of an nonzero section of $x^* \cL$ now yields a class in $K^* / (K^*)^n$.  Put another way:  choosing an identification of $x^* \cL$ with $K$ induces an identification of $x^* \cL^n$ with $K$, under which $x^* s$ is identified with an element $x_0 \in K^*$, which represents the class in  $K^* / (K^*)^n$ corresponding to $x$.  Note that a change in the choice of $s$ will apply a translation to the identification $B\mu_n(K) \cong K^* / (K^*)^n$, but such a change will modify heights by a bounded quantity, and if $K$ is a function field over a finite field $k$ and we require $s$ to globally generate $\cL^n$, the ambiguity in $s$ imposes translation by $k^*$, which will not change the heights we compute at all.  (If we want to remove this ambiguity entirely, we can fix for all time a choice of universal $\mu_n$-torsor $q\colon \Spec K \ra B\mu_n/K$ and an identification of $q^* \cL$ with $K$; having done so, we can require that $s$ pull back under $q$ to an element of $(K^*)^n$.)

We note that the above setup applies even when $\text{char}\, K$ divides $n$.

In particular:  Proposition~\ref{pr:heightcompute} yields
\beq
\height_{\L}(x)  = \sum_v \left\lceil \frac{1}{n} \log_{q_v}  |x_0|_v \right\rceil \log q_v.
\eeq
We note that our formula for $\height_{\L}(x)$ is unchanged, as it must be, when $x_0$ is modified by an element of $(K^*)^n$.  

By the computation above, when $K = \Q$ we see that the height of a point $x$ of $B \mu_n (\Q) = \Q^\times / (\Q^\times)^n$ is obtained as follows:  the class of $\Q^\times / (\Q^\times)^n$ corresponding to $x$ is represented uniquely by an integer $N$ with no $n$th power divisor, and as in \eqref{eq:heightqgg} we have
\beq
\height_{\L}(x) = \log |N|^{1/n}.
\eeq
(In the examples we will often suppress the $O_{\XX(K)}(1)$ error term when no confusion is likely.)

Once again, the height recovers the measure of complexity most frequently used in practice; when enumerating the elements of $\Q^*/(\Q^*)^n$, one typically identifies the elements of the group with $n$th power-free integers and lists in order of absolute value.

Of course, this choice $\L$ is not the only option.  Suppose, for instance, $K = \Q$ and $n=3$; then there are two equally good choices of nontrivial line bundle on $\cX$, namely $\L$ and $\L^2$.  Suppose $x \in B\mu_3(\Q)$ corresponds to $N M^2 \in \Q^\times / (\Q^\times)^3$, with $N$ and $M$ coprime and squarefree.  Then, as we have already observed above,
\beq
\height_{\L}(x) = \log |NM^2|^{1/3} = (1/3) \log N + (2/3) \log M.
\eeq
On the other hand, consider $\L' = \L^2$.  Then, having chosen $s$ as above, $s^2$ is a generating section of $(\L')^3$, so we can take $x_1$ to be $x^* s^2$, which corresponds to $N^2 M^4 \in \Q^\times$.  Putting this integer in minimal form modifies it to $N^2 M$, and another application of \eqref{eq:heightqgg} shows that
\beq
\height_{\L'}(x) = (2/3) \log N + (1/3) \log M.
\eeq

As a final illustration, we can see how the above two computations combine to yield Proposition \ref{prop:heightdisc} for $B\mu_3$. Let $\cV$ be the vector bundle $\cL \oplus \cL^2 \oplus \OO_{\XX}$.  Then 
\beq
\height_{\cV}(x) = \log N + \log M
\eeq
which we note is also $(1/2) \Delta_{L/\Q}$, where $L = \Q((NM^2)^{1/3}) = \Q((N^2M)^{1/3}) $ is the cubic extension of $\Q$ arising from $x$.  This is as it must be, as we now explain.  First, note that $\height_{\cV}^{\st}(x) = 0$ for all $x$ just as in the case $\cX = BG$, because $\cV$ pulls back to a trivial bundle on a finite cover of $B\mu_3$.  So
\beq
\height_{\cV}(x) = \sum_v \delta_{\cV;v}(x).
\eeq
Now the size of $\delta_{\cV;3}(x)$ is bounded by Proposition~\ref{pr:boundeddisc}, so at the expense of a bounded error term we can write
\beq
\height_{\cV}(x) = \sum_{v \neq 3} \delta_{\cV;v}(x).
\eeq
Let $x'$ be the point of $B(\mu_3)(\Q(\zeta_3))$ obtained by base change from $x$.  Since every prime other than $3$ is unramified in $\Q(\zeta_3)/\Q$, Proposition~\ref{pr:etaleld} tells us that
\beq
\height_{\cV}(x') = 2 \sum_{v \neq 3} \delta_{\cV;v}(x) = 2\height_{\cV}(x).
\eeq
On the other hand, over $\Q(\zeta_3)$, there is an isomorphism between $B(\mu_3)$ and $B(\Z/3\Z)$, which carries $\cV$ to the reduced permutation representation of $\Z/3\Z$, which we denote by $\cW$.  In fact, this isomorphism extends to $\Z[\zeta_3][1/3]$.  Let $y$ be the point of $B(\Z/3\Z)(\Q(\zeta_3))$ corresponding to $x'$ under this isomorphism, which we can also think of as the point associated to the Galois $\Z/3\Z$-extension $L(\zeta_3)/\Q(\zeta_3)$.  Then
\beq
\delta_{\cW;v}(y) = \delta_{\cV';v}(x')
\eeq
for all places $v$ of $\Q(\zeta_3)$ not dividing $3$.  We conclude that (as always, up to bounded error)
\beq
\height_{\cW}(y) = \sum_{v \neq 3} \delta_{\cW;v}(y) =  \sum_{v \neq 3} \delta_{\cV';v}(x') = 2\height_{\cV}(x).
\eeq
On the other hand, by \eqref{eq:heightdisc} we have
\beq
\height_{\cW}(y) = (1/2) \log |\Delta_{L(\zeta_3)/\Q(\zeta_3)}| = \log \Delta_{L/\Q}
\eeq
which shows that $\height_{\cV}(x) = (1/2)  \log |\Delta_{L/\Q}|$.

\subsection{Heights on weighted projective space and weighted projective stacks}
\label{subsec:weightedprojective}

In this section we consider rational points on the {\em weighted projective space} $\XX = \P(a_0, \ldots, a_k)$.  This stack is, by definition, the quotient $[\AA^{k+1}\setminus 0/\GG_m]$ where $\GG_m$ acts by the rule
\beq
\lambda\cdot(X_0, \ldots, X_k) = (\lambda^{a_0} X_0, \ldots, \lambda^{a_k} X_k).
\eeq
Then $\P(a_0, \ldots, a_k)$ is a smooth proper stack, and $\AA^{k+1} \setminus 0$ is the total space of a line bundle on $\XX$, whose dual is the {\em tautological bundle} $\OO_{\P(a_0, \ldots, a_k)}(1)$; for simplicity of notation, we denote the tautological bundle by $\cL$ for the rest of this section.  The coordinate function $X_i$ is a section of $\cL^{a_i}$.  Writing $A$ for the least common multiple of the $a_i$, the $k+1$ sections $X_i^{A/a_i}$ of $\cL^A$ generate $\cL^A$.  So we can compute heights of points in $\P(a_0, \ldots, a_k)(K)$ by applying Proposition~\ref{pr:heightcompute}, as we now explain.

Let $x$ be a point of $\P(a_0, \ldots, a_k)(K)$.  As in Proposition~\ref{pr:heightcompute}, we choose an identification of $x^* \cL$ with $K$; this assigns a value in $K$ to each of the $k+1$ coordinates, which values we denote $x_0, \ldots, x_k$.  Changing the identification of $x^*\cL$ with $K$ modifies this tuple by elementwise multiplication by tuples of the form $\lambda^{a_0}, \ldots, \lambda^{a_k}$, and we say that two tuples $x_0, \ldots, x_k$ are equivalent if they differ by such a transformation.  Then Proposition~\ref{pr:heightcompute} tells us that
\begin{equation}
\height_{\cL}(x) = \sum_v \lceil \log_{q_v}  \max_i |x_i|_v^{1/{a_i}} \rceil \log q_v.
\label{eq:weightedheight}
\end{equation}

In particular, when $K = \Q$, a rational point $x$ of $\P^1(a_0, \ldots, a_k)(\Q)$ can be identified with a tuple of integers $(M_0: \ldots :M_k)$ such that there is no prime $p$ with $p^{a_i} | M_i$ for all $i$.  Given a tuple which is in minimal form in this sense, the nonarchimedean primes contribute nothing to \eqref{eq:weightedheight}, and we get 
\begin{equation}
\height_{\cL}(x) = \log \max_i |M_i|^{1/a_i}.
\label{eq:wpsq}
\end{equation}

We note that this definition recovers the notion called ``naive height" for points of weighted projective space in \cite{shaska}.

Here is another means by which it is often practical to compute heights on weighted projective space when $K$ is a global function field.  Let $F$ be a section of $\cL^A$ -- for instance, it might be $X_i^{A/a_i}$ for some $i$ -- and let $y$ be the pullback of $F$ along $x$ to $x^*\cL^A$, which we have identified with $K$.  We define the {\em minimal valuation} of $F$ at a place $v$ of $K$ as follows.  Let $\pi_v \in K^*$ be an element which is a uniformizer at $v$, and define
\beq
c_v = \min \lfloor (1/a_i) \ord_v x_i \rfloor.
\eeq
Note that $c_v = 0$ if and only if all the $x_i$ are integral at $v$ and there is some $i$ such that $\ord_v x_i < a_i$.  In this case, we say that $(x_0, \ldots, x_k)$ is in minimal form.  If $(x_0, \ldots, x_k)$ is not in minimal form, we find an equivalent tuple in minimal form by modifying each $x_i$ by $\pi_v^{-a_i c_v}$; the effect of this transformation on $y$ is multiplication by $\pi_v^{-A c_v}$.  We therefore define the minimal valuation of $F$ to be 
\beq
\ord^{\min}_v F =  \ord_v y - A c_v =  \ord_v y -  A \min \lfloor (1/a_i) \ord_v x_i \rfloor.
\eeq
We note that this quantity does not depend on the identification of $x^* \cL$ with $K$, but only on $F$ and $v$.  Furthermore, we have
\beq
\sum_v \ord^{\min}_v F = \sum_v \ord_v y - \sum_v A \min \lfloor (1/a_i) \ord_v x_i. \rfloor = A \sum_v \max \lceil (1/a_i) \log_{q_v} \max |x_i|_v \rceil \log q_v
\eeq
and, by Proposition~\ref{pr:heightcompute}, this last quantity, taking $X_i^{A/a_i}$ to be the sections generating $\cL^A$, is exactly $A \height_{\cL} x$.  We conclude that
\begin{equation}
\height_{\cL} x = (1/A) \sum_v \ord^{\min}_v F \log q_v.
\label{eq:heightmin}
\end{equation}

\bigskip

The classical theory of Weil heights is often set up by defining heights on projective spaces, and then defining a height $\height_{\OO(1)}$ on $X(K)$ for other projective schemes $X$ by restriction. In a similar manner, one can define height functions on weighted projective stacks $\P(a_0,\dots,a_n)$ and obtain a height function $\height_{\cL}$ on $\cX(K)$ whenever $\cL$ is a generically globally generated power as in Section~\ref{subsec:computing-heights}. However, we stress that this naive approach does \emph{not} apply to all stacks of interest. Indeed, if $\cX$ is any stack with a non-abelian stabilizer group, it does not embed into a weighted projective stack, hence the necessity of our construction of heights given in Section~\ref{subsec:heights}.

\bigskip

One example of weighted projective stacks which is of great interest is the moduli stack of elliptic curves $\overline{\cM}_{1,1}$. If $K$ is a field of characteristic not equal to 2 or 3, this stack is isomorphic over $K$ to the weighted projective line $\P(4,6)$:~concretely, given an elliptic curve $E/K$, we can write it in Weierstrass form $y^2 = x^3 + Ax + B$ with $A,B$ in $K$.  This Weierstrass form is unique up to transformations $(A,B) \ra (\lambda^4 A, \lambda^6 B)$.  So $(A:B)$ is a well-defined point on $\P(4,6)$.  Moreover, the isomorphism takes the line bundle $\OO(1)$ on $\P(4,6)$ to the Hodge bundle $\cL$ on $\overline{\cM}_{1,1}$ (the bundle whose $k$th powers have weight $2k$ modular forms as sections). We conclude that, if $E/K$ is an elliptic curve over a global field of characteristic at least $5$, with Weierstrass equation $y^2 = x^3 + Ax+B$, thought of as a $K$-point of $\overline{\cM}_{1,1}$, we have
\beq
\label{eq:naiveheight}
\height_{\cL} E =  \log \max (|A|^{1/4},|B|^{1/6}).
\eeq
In other words, the familiar ``naive height" of an elliptic curve is indeed a height in the sense of this paper. 

When $K$ is a number field, the identification of $\overline{\cM}_{1,1}/\Q$ with $\P(4,6)/\Q$ does not extend to $\Spec \Z$, but only to $\Spec \Z[1/6]$.  However, this is enough to ensure that $\cL^{12}$ is still {\em generically} globally generated by $A^3$ and $B^2$ in the sense of Proposition~\ref{pr:heightcompute}, and so \eqref{eq:naiveheight} still holds up to a bounded error term.

When $K$ is a global function field of characteristic at least $5$, we can also apply \eqref{eq:heightmin}; here $A = \lcm(4,6) = 12$ and the discriminant $\Delta$ is a natural section of $\cL^{12}$ to use.  So we find
\begin{equation}
\height_{\cL} E = \frac{1}{12}\sum_v \ord_v^{\min} \Delta
\label{eq:heightecmindisc}
\end{equation}
where $\ord_v^{\min} \Delta$ is the valuation of the discriminant of a Weierstrass equation for $E$ which is minimal at $v$.

We will return to the interesting case where $K$ is a global function field of characteristic $2$ or $3$ in Section~\ref{ss:av}.

More generally, the moduli space of hyperelliptic curves over $K$ with a marked Weierstrass point can be thought of as a weighted projective space as long as the characteristic of $K$ is large enough:  if $Y \ra \P^1$ is the hyperelliptic map, we can move the image of the marked Weierstrass point to $\infty$ and (assuming the characteristic of $K$ is not $2$) complete the square in $y$, so that the curve has affine equation
\beq
y^2 = x^{2g+1} + a_1 x^{2g} + \cdots + a_{2g+1}
\eeq
then (again throwing out a finite set of characteristics for $K$) modify by the automorphism $x \ra x+\frac{a_1}{2g+1}$ of $\P^1$ in order to make $a_1 = 0$.  We now have an equation for $Y$ of the form
\begin{equation}
y^2 = x^{2g+1} + a_2 x^{2g-1} + \cdots + a_{2g+1}
\label{eq:weierstrasshe}
\end{equation}
which is unique up to the operation of multiplying $a_i$ by $\lambda^{2i}$ for $\lambda \in K^*$.  In other words, the moduli stack of hyperelliptic curves with marked Weierstrass point is isomorphic over $K$ to the weighted projective $(2g-1)$-space $\P(4,6,8,\ldots,4g+2)$.  So a hyperelliptic curve over $K$ can be thought of as a point $x$ on $\P(4,6,8,\ldots, 4g+2)$, whose height with respect to $\OO(1)$ we have computed above.  In particular, if $Y$ is a hyperelliptic curve over $\Q$ with Weierstrass equation \eqref{eq:weierstrasshe}, where the $a_i$ are chosen to be integers so that there is no prime $p$ with $p^{2i}|a_i$, the height of $Y$ is $\log \max |a_i|^{1/2i}$, which again is equivalent to the notion of height typically used for hyperelliptic curves with a specified Weierstrass point as in, e.g., the work of Bhargava and Gross~\cite{bhargavaG:average-2-selmer-hyperelliptic}. 

\begin{question}  A weighted projective space is an example of a toric stack, as in \cite{GeraschenkoSatrianoToricI}. What is the height of a rational point on a more general toric stack?
\end{question}

\subsection{Heights of abelian varieties}
\label{ss:av}

We have established above in \eqref{eq:naiveheight} that, when $K$ is a global field of characteristic at least $5$, the height of an elliptic curve with respect to the Hodge bundle on $\overline{\cM}_{1,1}$ is the same as the customary naive height.  There is another natural height on an elliptic curve over a global field: the {\em Faltings height} $\height_{\Fal}(E)$.  In this section we study the extent to which Faltings height can be seen as a height in the sense of the present paper. 

We note first that Faltings height satisfies some of the same formal properties as the heights defined in this paper do.  For example:  if $L/K$ is a field extension, it is not necessarily the case that $\height_{\Fal}(E/L)$ is $[L:K]\height_{\Fal}(E/K)$; however, this equality {\em does} hold if $E/K$ has everywhere semistable reduction, so we can define a stable Faltings height $\height_s(E/K)$ to be $[L:K]^{-1}\height_{\Fal}(E/L)$ for any $L/K$ such that $E/L$ has everywhere semistable reduction.  The height $\height_{\cV}$ for any vector bundle on $\overline{\cM}_{1,1}$ has the same properties, since an elliptic curve over $L = K(C')$ with everywhere semistable reduction is an integral point of $\overline{\cM}_{1,1}$, i.e., a morphism from $C'$ to $\overline{\cM}_{1,1}$. Lastly, $\height_{\Fal}(E/K) - \height_s(E/K)$ has a canonical local decomposition, just as does $\height_{\cV}(E/K) - \height^{\st}_{\cV}(E/K)$, see (\ref{eq:lsh}).

It is thus natural to ask whether Faltings height is $\height_\cV$ for some vector bundle $\cV$, or at least whether the two heights differ by a bounded function.  One can even guess which vector bundle one might use; for everywhere semistable $E/K$, or in other words morphisms $f\colon C \ra \overline{\cM}_{1,1}$, we have
\beq
\height_{\Fal}(E) = \deg f^* \cL
\eeq
where $\cL$ is the Hodge bundle $\Omega^1_{\mathcal{E}/\overline{\cM}_{1,1}}$ and $\mathcal{E}$ the universal semielliptic curve over the moduli stack.  

So does $\height_{\Fal}$ differ from $\height_{\cL}$ by a bounded function?  Unfortunately, the answer is in general no -- remember, in the number field case, $\height_{\cL}$ is naive height, and the difference between the naive height and the Faltings height of an elliptic curve over a number field $K$ is {\em not} bounded, as one can see for instance in the proof of \cite[Lemma 3.2]{pazuki:Modular-invariants-and-isogenies}.

The reason for this is the following.  When $K$ is a number field, the specification of the degree above requires a choice of metrization on $\cL$ at the archimedean places; for Faltings height, the appropriate Hermitian norm actually has a singularity at the cusp of moduli space, and in the present paper we have not considered metrized line bundles in this level of generality; rather, we have assumed that our choice of metrization on $\cL$ is defined on all of $\overline{\cM}_{1,1}(\C)$, including the cusp.

However, when $K$ is a global function field, this archimedean issue is absent, and we find the following.

\begin{proposition}  Let $K$ be a global function field of characteristic at least $5$, and let $\cL$ be the Hodge bundle on $\overline{\cM}_{1,1}$ as above.  Then
\beq
\height_{\Fal}(E) = \height_{\cL}(E)
\eeq
for all elliptic curves $E/K$.
\end{proposition}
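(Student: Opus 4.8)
The plan is to evaluate both heights explicitly and see that each equals $\tfrac{1}{12}\deg_C\mathfrak{d}_{E/K}$, where $\mathfrak{d}_{E/K}=\sum_v v(\Delta^{\min}_{E/K_v})\,[v]$ is the minimal discriminant divisor of $E$ (here $\Delta^{\min}_{E/K_v}$ denotes the discriminant of a Weierstrass equation for $E$ minimal at $v$; since $\mathrm{char}\,K\geq 5$ this is the usual notion, and $\mathfrak{d}_{E/K}$ is a well-defined effective divisor on $C$ even when a global minimal Weierstrass model does not exist). For $\height_{\cL}$ this is essentially already in hand: it is exactly equation \eqref{eq:heightecmindisc}, once one observes that putting a pair $(A,B)$ into ``minimal form'' at $v$ in the sense of Subsection~\ref{subsec:weightedprojective} produces, in characteristic at least $5$, a Weierstrass equation for $E$ which is minimal at $v$, so that $\ord_v^{\min}\Delta=v(\Delta^{\min}_{E/K_v})$ and hence $\height_{\cL}(E)=\tfrac{1}{12}\deg_C\mathfrak{d}_{E/K}$.

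The remaining task is to check $\height_{\Fal}(E)=\tfrac{1}{12}\deg_C\mathfrak{d}_{E/K}$. Because $K$ is a function field there are no archimedean places, so no metric enters and the Faltings height is simply $\deg_C\omega$, where $\omega=e^{*}\Omega^{1}$ is the Hodge bundle (the pullback of the relative differentials along the identity section); this may be computed on the N\'eron model, the minimal regular model, or the minimal Weierstrass model interchangeably, since in each case the identity section lies in the common smooth locus. I would then glue the local minimal Weierstrass equations of $E$ into a (generalized) Weierstrass model $p\colon W\to C$, whose coefficients $A,B$ are global sections of $\omega^{\otimes 4}$ and $\omega^{\otimes 6}$ and whose discriminant $\Delta_W$ is a nonzero global section of $\omega^{\otimes 12}$ with $\mathrm{div}(\Delta_W)=\mathfrak{d}_{E/K}$ by construction. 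Hence $12\deg_C\omega=\deg_C\mathfrak{d}_{E/K}$, and combining with the previous paragraph gives $\height_{\Fal}(E)=\height_{\cL}(E)$.

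The one delicate point — and the precise reason the analogous identity is \emph{false} over number fields, as explained just above — is the claim that over a function field the Faltings height carries no correction beyond $\deg_C\omega$. Over a number field one must metrize $\omega$ at the archimedean places using the $L^2$-metric on $H^{0}(E(\C),\Omega^{1})$, and this metric acquires a logarithmic singularity at the cusp of $\overline{\cM}_{1,1}$, so it is not the restriction of a metric defined on all of $\overline{\cM}_{1,1}(\C)$ of the kind allowed in this paper; that mismatch is exactly the gap between $\height_{\Fal}$ and $\height_{\cL}$ in the number-field case. In the function-field setting this issue is vacuous, and the remaining ingredient — that the Weierstrass discriminant section cuts out the minimal discriminant divisor even at additive-reduction places — is standard Kodaira--N\'eron theory. (Alternatively one can argue through the local decomposition \eqref{eq:lsh}: choosing $L/K$ with $E_L$ everywhere semistable exhibits $E_L$ as an honest integral point of $\overline{\cM}_{1,1}$, so $\height^{\st}_{\cL}(E_L)=\height_{\Fal}(E_L)$; Proposition~\ref{prop:stable-height-is-stable} together with the definition of $\height_s$ then gives $\height^{\st}_{\cL}(E)=\height_s(E)$, after which it suffices to match, place by place, the local discrepancy $\delta_{\cL;v}(E)$ with the local term of $\height_{\Fal}-\height_s$ at $v$ — but the direct discriminant computation above is cleaner and I would present that one.)
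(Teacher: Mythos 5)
Your argument is correct and follows essentially the same route as the paper: reduce both sides to $\tfrac{1}{12}\deg_C\mathfrak{d}_{E/K}$, using \eqref{eq:heightecmindisc} for $\height_{\cL}$ and the minimal-discriminant description of the Faltings height of an elliptic curve over a function field for $\height_{\Fal}$. The only difference is that where you derive the latter identity (via $\Delta$ as a section of $\omega^{\otimes 12}$ on a glued minimal Weierstrass model), the paper simply cites it.
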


\begin{proof}  For global function fields of characteristic larger than $3$, the Faltings height of $E/K$ is $(1/12)$ times the sum over all places of the valuation of a minimal discriminant: see e.g., \cite[Def 2.2]{bandini}. We have already seen in \eqref{eq:heightecmindisc} that $\height_\cL(E)$ is given by the same expression.
\end{proof}

The case of small characteristic is a different story.  Let $K$ be the function field of a curve $C$ in characteristic $p$. Then the Faltings height of an elliptic curve over $K$ is still the valuation of a minimal discriminant divisor on $C$, even if the characteristic of $K$ is $2$ or $3$, and the Faltings height has the Northcott property.\footnote{We do not know a citation for this fact in the published literature, but learned it via personal communication from Xinyi Yuan.}  

On the other hand, $\height_\cL$ is {\em not} Northcott in this setting,  Note for instance that $\overline{\cM}_{1,1}/\mathbf{F}_3$ contains as a closed substack a copy of $BG$ lying over the coarse point $j=0=1728$, where $G$ is the automorphism group scheme of an elliptic curve with $j$-invariant $0$ in characteristic $3$.  The group scheme $G$ has order $12$ and sits in an exact sequence
\beq
1 \ra A \ra G \ra \mu_4 \ra 1
\eeq
where $A \cong \Z/3\Z$ (see for instance \cite[Exercise A.1]{silverman:aec}) and $\lambda \in \mu_4$ acts on $A$ by multiplication by $\lambda^2$.  The pullback of $\cL$ to $BG$ is a line bundle on $BG$, which is necessarily trivial on the commutator subgroup $A$.  So $\cL$ pulls back to the zero bundle under the composition $BA \ra BG \ra \overline{\cM}_{1,1}$, which means that any point $x$ in the image of $BA(K) \ra \overline{\cM}_{1,1}(K)$ has $\height_{\cL}(x) = 0$.  There are infinitely many such points, corresponding to the $\Z/3\Z$-extensions of $K$.  Concretely, elliptic curves given by Weierstrass equations of the form
\begin{equation}
y^2 = x^3 - x - f(t)
\label{eq:badec3}
\end{equation}
all have height $0$ with respect to $\cL$.  Another way to see this is to observe that the space of sections of $\cL^{12}$ -- that is, of weight-$12$ modular forms of level $1$ in characteristic $3$ -- is two-dimensional and is spanned by $\Delta$ and $b_2$, where $b_2$ is the Hasse invariant.~\cite[Prop 6.2]{deligne:formulaire}.  Any Weierstrass equation of type \eqref{eq:badec3} has $b_2(E) = 0$ and $\Delta(E) = 1$ (\cite[Appendix A, Prop 1.1.~b)]{silverman:aec}).  So by Proposition~\ref{pr:heightcompute}, using the fact that $\Delta$ is constant, we see again that $\height_{\cL}(E) = 0$ for any such $E$.

This does not mean, however, that Faltings height is a different kind of height from those discussed in this paper; it only means it does not agree with the height arising from the Hodge bundle or any of its powers.  But, as explained in a paper of Meier \cite{meier:vb}, there are other vector bundles!  When $K$ is a field of characteristic greater than $3$, every vector bundle on $\overline{\cM}_{1,1}$ is isomorphic to a direct sum of line bundles, which can only be powers of the Hodge bundle~\cite[Cor 3.6]{meier:vb}), essentially because $\overline{\cM}_{1,1}$ is a weighted projective line in this case.  But in characteristic $2$ and $3$, Meier constructs indecomposable higher-rank vector bundles on $\overline{\cM}_{1,1}/K$.\footnote{Meier only describes these bundles on $\cM_{1,1}$ but it is not hard to show they extend to the compactification.}  Thus, the following question still makes sense.  

\begin{question}[A. Landesman]
When $K$ is a global field of characteristic $2$ or $3$, is there a vector bundle $\cV$ on $\overline{\cM}_{1,1}/K$ such that $\height_{\cV} =  c \height_{\Fal}$ for some $c \in \Z$?
\label{q:meier}
\end{question}

\medskip

We originally asked this question with $c=1$; i.e., is the Faltings height itself a height in our sense?  Landesman showed in his thesis~\cite{landesman:thesis} that this is too much to hope for; in characteristic $3$, there is no vector bundle $\cV$ on $\overline{\cM}_{1,1}/\mathbf{F}_3$ with $\height_{\cV} =  \height_{\Fal}$.  However, Landesman then raises the question stated above, which remains open:  is there a vector bundle which computes some integer multiple of the Faltings height in characteristic $3$?  For that matter, is there even a vector bundle whose associated height is Northcott?


Furthermore, one may ask the same question about abelian varieties of higher dimension.  The Faltings height is usually thought of as being related to the Hodge bundle on the moduli stack $\bar{\cA}_g$.  But the stacky height associated to this line bundle, or {\em any} line bundle, will not be Northcott on $\bar{\cA}_g$, for the same reason it failed to be Northcott for $\overline{\cM}_{1,1}$ in low characteristic; there are abelian varieties of dimension $d$ with non-abelian automorphism group, which give rise to maps $BG \inj \bar{\cA}_g$ for nonabelian $G$, and no line bundle on $BG$ can be Northcott.  This problem can be avoided by computing heights on $\bar{\cA}_g$ with respect to the rank-$g$ vector bundle $\cV = e^* \Omega^1_{A/\bar{\cA}_g}$, where $A$ is the universal principally polarized abelian variety over the moduli stack, rather than with respect to its determinant, the Hodge bundle.  There will still be problems in low characteristic, as we have seen from the case of elliptic curves.  One way of understanding the difficulty with curves of the form \eqref{eq:badec3} is that a wildly ramified extension of $K$ is necessary in order to arrive at a curve with semistable reduction; this cannot be the case for elliptic curves over fields of characteristic $5$ or greater.  The following question thus seems reasonable.

\begin{question}
When $K$ is a global function field, $\cV$ is the vector bundle $e^* \Omega^1_{A/\overline{\cA}_g}$ on $\overline{\cA}_g$, and $A/K$ is an abelian variety that becomes semistable over an everywhere tamely ramified extension of $K$, is it the case that
\beq
\height_{\cV}(A) = \height_{\Fal}(A)?
\eeq
\label{q:av}
\end{question}

If Questions~\ref{q:meier} and \ref{q:av} both have a positive answer, one might well ask the common descendant of both questions:  are there ``exotic" vector bundles on $\cA_g$ in small (relative to $g$) characteristic which compute the Faltings height of abelian varieties that require a wild extension to become semistable?

Finally, we return for a moment to the number field case.  Because of the singularity at the boundary of $\overline{\cA}_g$ of the Faltings metric, we cannot expect $\height_{\cV}$  to match $\height_{\Fal}$ exactly.  But there is a way to ask whether the two heights agree ``apart from the archimedean place."  Namely, we can ask the following.

\begin{question}
Let $K$ be a global field, let $v$ be a nonarchimedean place of $K$, and let $A/K$ be an abelian variety which becomes semistable over a tamely ramified extension of $K_v$.  Is the component at $v$ of $\height_{\Fal}(A) - \height_s(A)$ equal to $\delta_{\cV;v}(A)$?
\label{q:localfaltings}
\end{question}

This is a purely local question which has to do with the behavior of the tangent space to the N\'{e}ron model of $A$ under ramified base change.  A positive answer to Question~\ref{q:localfaltings} would imply a positive answer to Question~\ref{q:av}, as follows.  The stable Faltings height $\height_s(A)$ agrees with $\height^{\st}_{\cV}$ in this setting, because both are given by the degree of the pullback of the Hodge bundle to an integral point $C' \ra \overline{\cA}_g$, where $C'$ is a cover of $C$.  And since there are no archimedean places, the positive answer to Question~\ref{q:localfaltings} shows that
\beq
\height^{\Fal}(A) - \height_s(A) = \sum_v \delta_{\cV;v}(A) = \height_{\cV}(A) - \height^{\st}_{\cV}(A).
\eeq

\subsection{Heights on footballs}
\label{ss:footballs}

A football\footnote{The ``football" here is understood to be an American football, which has two singular points.  In the professional sporting context, the residual gerbes at these points are not specified.} $\mathcal{F}(a,b)$ is a $\P^1$ rooted at $0$ and $\infty$, with residual gerbes $\mu_a$ and $\mu_b$ respectively.  Let $K$ be a global field; we emphasize that $K$ is allowed to have any characteristic, including characteristics dividing $a$ or $b$.  (When $K$ has one of these characteristics, $\mathcal{F}(a,b)$ is a tame Artin stack but not a Deligne--Mumford stack.)  As an illustration of the (moderate) subtlety of the Northcott condition in the stacky case, we will work out which line bundles on $\mathcal{F}(a,b)$ are Northcott.

There are three kinds of $K$-points of $\mathcal{F}(a,b)$, which may be treated separately.
\begin{itemize}
\item The points supported at $0$; these are naturally identified with $K$-points of $B(\mu_a)$, which are in turn identified with the set $K^* / (K^*)^a$;
\item The points supported at $\infty$; these are naturally identified with $K$-points of $B(\mu_b)$, which are in turn identified with the set $K^* / (K^*)^b$;
\item The rest of the points, which are naturally identified with the points on $\P^1(K)$ other than $0$ and $\infty$; that is, these points are in bijection with $K^*$.
\end{itemize}

Any divisor on $\mathcal{F}(a,b)$ is linearly equivalent to one of the form $d[P] + n[0] + m[\infty]$, where $P$ is some point on $\mathbb{G}_m$; such a divisor has degree $d + n/a + m/b$.   This expression is not unique, but is subject to the relations $a[0] \sim b[\infty] \sim [P]$.  Take $\mathcal{L}$ to be the line bundle on $\mathcal{F}(a,b)$ corresponding to $d[P] + n[0] + m[\infty]$.  We now explain how to compute $\height_{\mathcal{L}}(x)$ for $x \in \mathcal{F}(a,b)(K)$.

For the first two types of points, this computation of height  has already been carried out in Section~\ref{subsec:computing-heights}.  For a point $x$ of the first type, $d$ and $m$ are irrelevant.  The class in $K^* / (K^*)^a$ associated to $x$ is represented by a function $f \in K^*$, and the height of $x$ is  a sum over places of $K$:
\beq
\height_{\L}(x) = \sum_v \left\lceil \frac{n}{a} \ord_v(f) \right\rceil.
\eeq

Similarly, for a point of the second type, represented by the class of $g$ in $K^* / (K^*)^b$  the height is 
\beq
\height_{\L}(x) = \sum_v \left\lceil \frac{m}{b} \ord_v(g) \right\rceil.
\eeq

We now treat points of the third, or generic type.  For simplicity of description, take $K$ to be the function field of a smooth proper curve $C/\FF_q$.  Then $x$ affords a rational map $\phi$ from $C$ to $\mathcal{F}(a,b)$.  Write $\phi_c\colon C \ra \P^1$ for the composition of $\phi$ with the coarse moduli map, denote $\deg \phi_c = \deg \phi$ by $e$, and write  $\sum e_i P_i$ for the divisor $\phi_c^* [0]$ and $\sum e_i' Q_i$ for the divisor $\phi_c^* [\infty]$. Then $\sum e_i \deg P_i = \sum e'_i  \deg Q_i = e$. 

We may take $\cC$ to be a root stack with residual gerbe $\mu_a$ at the $P_i$ and  $\mu_b$ at the $Q_i$.  Then $\overline{x}^* \mathcal{L}^\vee$ is the divisor 
\beq
-\left(d\phi^{-1}(P) + \sum_i \frac{e_i n}{a} P_i + \sum_i \frac{e'_i m}{b} Q_i\right)
\eeq
whose degree, as it must be, is $-e \deg \mathcal{L}$.

We then have
\begin{eqnarray}
\height_{\mathcal{L}}(x) & = & - \deg \pi_* \overline{x}^* \mathcal{L} = -\left(-ed + \sum_i \left\lfloor -\frac{e_i n}{a} \right\rfloor \deg P_i + \sum_i \left\lfloor -\frac{e'_i m}{b} \right\rfloor \deg Q_i\right) \log q \\
& = & \left(ed + \sum_i \left\lceil \frac{e_i n}{a} \right\rceil \deg P_i + \sum_i \left\lceil \frac{e'_i m}{b} \right\rceil \deg Q_i \right) \log q.
\label{eq:footballheight}
\end{eqnarray}

In particular, we note that $\height_{\mathcal{L}}(x) \geq e \log q \deg \mathcal{L}$, with equality holding just when every $e_i$ is a multiple of $a$ and every $e'_i$ is a multiple of $b$, which is to say, when $x$ actually extends to an integral point of $\mathcal{F}(a,b)$.

This description suffices to tell us which line bundles have the Northcott property.  We already see that the set of Northcott line bundles does not form a cone, because it is not closed under addition.  (Indeed, we could have already seen that from the case $B(\Z/2\Z)$, where the nontrivial line bundle $\mathcal{L}$ is Northcott and $\mathcal{L}^{\otimes 2}$, which is trivial, is not Northcott.)

\begin{proposition} Choose $a,b$ coprime integers and let $K$ be the function field of a curve $C$. A divisor $L = d + n[0] + m[\infty]$ on $\mathcal{F}(a,b)$ is Northcott if and only if $\deg L > 0$ and $(n,a) = (m,b) = 1$.
\end{proposition}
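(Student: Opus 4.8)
The plan is to treat the three families of $K$-points separately and to rest the whole argument on two standard finiteness facts: the Northcott property for $\P^1$ over a global field, and the finiteness of $\Pic^0$ of a curve over a finite field. First I would record the reduction to the base field: if $L'/K$ is a finite extension, then $L'$ is again the function field of a smooth proper curve $C'$, the football and the line bundle $\cL$ pull back to the corresponding objects over $C'$, and $\deg L$, $(n,a)$ and $(m,b)$ are unchanged; hence the stated conditions are insensitive to base change, and it suffices to prove (i) that $\{x\in\mathcal F(a,b)(K):\height_{\cL}(x)\le B\}$ is finite for every $B$ whenever $\deg L>0$ and $(n,a)=(m,b)=1$, over an arbitrary function field $K$, and (ii) that if one of these conditions fails there are already infinitely many points of $\mathcal F(a,b)(K)$ of bounded height. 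Normalize so that $0\le n<a$ and $0\le m<b$.

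For (i) I would argue type by type. For a generic-type point, \eqref{eq:footballheight} together with $\lceil t\rceil\ge t$ gives $\height_{\cL}(x)\ge e\,(\deg L)\log q$, where $e$ is the degree of the coarse map to $\P^1$; since $\deg L>0$, a height bound bounds $e$, and finitely many generic-type points have bounded $e$ by Northcott on $\P^1$ (plus the finitely many constant points). For a first-type point, represented by $f\in K^\times/(K^\times)^a$, the height is $\sum_v\lceil\tfrac na\ord_v f\rceil\log q_v$, which I would rewrite as $\sum_v\varepsilon_v\log q_v$ with $\varepsilon_v:=\lceil\tfrac na\ord_v f\rceil-\tfrac na\ord_v f\in[0,1)$; using $(n,a)=1$ one checks $\varepsilon_v\ge\tfrac1a$ precisely when $a\nmid\ord_v f$, i.e.\ precisely when $v$ lies in the support of the class of $f$. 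Thus a height bound bounds $\sum_{v\in\supp}\deg v$, so only finitely many finite sets $S$ of places occur as supports; and for a fixed $S$, the reduction mod $a$ of the valuation vector lies in the finite group $\bigoplus_{v\in S}\ZZ/a$, while the fibres of this reduction map on $K^\times/(K^\times)^a$ are torsors under the subgroup of empty-support classes, which injects (after killing the image of $\FF_q^\times$) into $\Pic^0(C)[a]$ and is therefore finite. So the first-type locus of bounded height is finite; the second type is identical with $(m,b)$ replacing $(n,a)$.

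For (ii): if $\deg L\le 0$, I would take any non-constant $g\in K^\times$; then each $g^{abk}$ (for $k\ge 1$) extends to an integral point of $\mathcal F(a,b)$, since its divisor of zeros, resp.\ of poles, is $abk$ times that of $g$ and hence has all multiplicities divisible by $a$, resp.\ by $b$, so the required root-stack lifts exist; these integral points are pairwise distinct, and by \eqref{eq:footballheight} each has height $abk\,\deg(\operatorname{div}_0(g))\,(\deg L)\log q\le0$, so no bound $B\ge0$ leaves only finitely many. If $(n,a)=e_0>1$, I would instead use the first-type points $[g^{a/e_0}]$: the image of $(K^\times)^{a/e_0}$ in $K^\times/(K^\times)^a$ is infinite (it is a quotient of the infinite group $K^\times/(K^\times)^{e_0}$ by a finite subgroup), and since $e_0\mid n$ the quantity $\tfrac na\ord_v(g^{a/e_0})=\tfrac n{e_0}\ord_v g$ is always integral, so each such point has height $\tfrac n{e_0}\sum_v\ord_v(g)\log q_v=0$. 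The case $(m,b)>1$ is symmetric, which finishes the necessity direction.

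I expect the main obstacle to be the first-type (equivalently second-type) finiteness in (i): converting a height bound into a bound on the support of a class in $K^\times/(K^\times)^a$ genuinely uses the coprimality $(n,a)=1$ to obtain a uniform positive lower bound on the fractional contributions $\varepsilon_v$, and bounding the number of classes of given support is exactly where finiteness of $\Pic^0(C)$ enters. By contrast the generic-type part is just Northcott on $\P^1$, and the necessity direction is a matter of exhibiting the right explicit families.
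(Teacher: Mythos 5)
Correct, and it is essentially the paper's own argument: the same case split into the three point types, the same lower bound $\height_L(x)\geq e(\deg L)\log q$ (hence Northcott on $\P^1$) for generic-type points, and the same product-formula rewriting of the height as a sum of fractional parts plus the coprimality observation for the stacky-type points. You spell out a few steps the paper leaves terse -- the reduction to a single base field (needed since Northcott quantifies over extensions of $K$), the finiteness of the empty-support classes via the map to $\Pic^0(C)[a]$ behind the paper's ``finite set of cosets of $(K^\times)^a$,'' and the explicit families $g^{abk}$ and $g^{a/e_0}$ in the necessity direction where the paper pulls back maps $\P^1\to\mathcal{F}(a,b)$ of the form $[B^b:A^a]$ -- but these are elaborations of the same route rather than a different one.
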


\begin{proof}
Suppose $(n,a) = r > 1$.  Then any point of $\PP(a,b)$ of the first type which corresponds to $f \in (K^*)^{a/r}/(K^*)^a \subset K^* / (K^*)^a$ has height $0$ with respect to $L$, which contradicts Northcott.  The argument is just the same if $(m,b) > 1$.

We observe that there are infinitely many maps $\P^1 \ra \mathcal{F}(a,b)$; namely, those whose coarse map $\P^1 \ra \P^1$ is of the form $[B(s,t)^b:A(s,t)^a]$.  Any such map, pulled back to $C$ via a map $C \ra \P^1$, gives an integral point $C \ra \mathcal{F}(a,b)$ of some coarse degree $e$, whose height is $e \deg L$; we can make $e$ as large as we want, which shows that $L$ cannot be Northcott if $\deg L \leq 0$.

Suppose, on the other hand, that all three conditions are met.  We have already shown that points $x$ of the third type have $\height_L(x) \geq e \log q \deg L$; since $\deg L$ is positive, $\height_L(x)$ gives an upper bound for $e$, which makes the set of possible $x$ finite.  For points of the first type represented by $f \in (K^*)/(K^*)^a$, we observe that
\beq
\height_{\cL}(x) = \sum_v \left\lceil \frac{n}{a} \ord_v(f) \right\rceil = \sum_v \left\{\frac{n}{a} \ord_v(f) \right\}
\eeq
the latter equality following from $\sum_v \ord_v(f) = 0$.  So a bound on the height of $x$ yields a bound on the number of places where $\frac{n}{a} \ord_v(f)$ is not an integer; since $(n,a) = 1$, this bounds the number of places where (more precisely:  the degree of the divisor where) $\ord_v(f)$ is not a multiple of $a$.  Bounding this quantity places $f$ within a finite set of cosets of $(K^*)^a$, so we are done.  The case of points of the second type is exactly the same.
\end{proof}

\subsubsection{Consistency check: footballs and weighted projective lines}

When $a$ and $b$ are relatively prime, the football $\mathcal{F}(a,b)$ is isomorphic to the weighted projective line $\PP(a,b)$; on $K$-points, the isomorphism $\psi$ from $\PP(a,b)$ to $\mathcal{F}(a,b)$ sends $(s:t)$ to the point $t^a/s^b$ when $st \neq 0$.  Let $m,n$ be integers such that $ma+nb = 1$; then the line bundle  $\cL = n[0] + m[\infty]$ on $\FF(a,b)$ has degree $1/ab$, and its pullback to $\PP(a,b)$ is the tautological bundle $\OO_{\PP(a,b)}(1)$.  If $x$ is a point of $\PP(a,b)(K)$, we have
\beq
\height_{\OO_{\PP(a,b)}(1)}(x) = \height_{\cL}(\psi(x))
\eeq
This provides an opportunity to check consistency between the formulas we have given for the height of a point on weighted projective space and the height of a point on a football.  Let $x = (s:t)$ be a point of $\PP(a,b)$.  Then by \eqref{eq:weightedheight} we have
\begin{equation}
\height_{\OO_{\PP(a,b)}(1)}(x) = \sum_v \lceil \log_{q_v}  \max (|s|_v^{1/a},|t|_v^{1/b}) \rceil \log q_v.
\label{eq:pab}
\end{equation}
We now compute  $\height_{\cL}(\psi(x))$.  Recall that $\psi(x)$ is the point on $\FF(a,b)$ corresponding to the point $t^a/s^b$ of $\P^1(K)$.  In the notation of the above section, the points $P_i$ correspond to those places $v$ of $K$ where $a \ord_v t - b \ord_v s > 0$, and the points $Q_i$ to those places where $a \ord_v t - b \ord_v s < 0$.  When $v$ is a prime with $a \ord_v t - b \ord_v s > 0$, we have, again maintaining the notation of \eqref{eq:footballheight},
\beq
e_i = a \ord_v t - b \ord_v s 
\eeq
and
\beq
\deg P_i = \log q_v / \log q.
\eeq
So the contribution of $v$ to \eqref{eq:footballheight} is
\beq
\left\lceil \frac{(a \ord_v t - b \ord_v s)n}{a} \right\rceil \log q_v =  \left( n \ord_v t  - \left\lceil \frac{nb \ord_v s}{a} \right\rceil \right)\log q_v
= \left( n \ord_v t + m \ord_v s - \left\lceil \frac{\ord_v s}{a} \right\rceil \right) \log q_v.
\eeq
By a similar argument, one shows that when $a \ord_v t - b \ord_v s < 0$ one gets a contribution of
\beq
 \left( n \ord_v t + m \ord_v s - \left\lceil \frac{\ord_v t}{b} \right\rceil \right) \log q_v
\eeq
Since the first case obtains exactly when $\ord_v s / a < \ord_v t / b$, we can express the contribution of $v$ uniformly as
\beq
\left( n \ord_v t + m \ord_v s - \left\lceil \min \left(\frac{\ord_v s}{a} , \frac{\ord_v t}{b} \right) \right\rceil \right) \log q_v.
\eeq
Summing over $v$, the first two terms vanish by the product formula, and we are left with
\beq
\height_{\cL}(\psi(x)) = -\sum_v \min \left(\left\lceil \frac{\ord_v s}{a} \right\rceil, \left\lceil \frac{\ord_v t}{b} \right\rceil \right) \log q_v
\eeq
which is just \eqref{eq:pab} in another form.

\subsection{Heights on symmetric powers of varieties}
\label{s:sympowers}

There is a substantial literature about points on varieties of bounded algebraic degree.  We explain how these questions look through the lens of heights on stacks.  Let $X$ be a smooth proper scheme of dimension $n$ over $K$.  A point on $X$ of algebraic degree $m$ over $K$ can be thought of as a $K$-point on the stack $\Sym^m X = [X^m / S_m]$.  In this section, we explain how to compute the height of such a point.  Slightly more generally, let $G$ be a subgroup of $S_m$, and let $\cX$ be the quotient $[X^m / G]$; when $G = S_m$, our stack $\cX$ is $\Sym^m X$. 

In order to talk about height, we need to choose a vector bundle $\cV$ on $\cX$; this is the same thing as an $G$-equivariant vector bundle on $X^m$.  The choice we make is as follows:  let $V_0$ be some vector bundle of rank $r$ on $X$, and let $\pi_1,\ldots, \pi_m\colon X^m \ra X$ be the $m$ projections.   Then $\widetilde{V} = \oplus_i \pi_i^* V_0$ is an $G$-equivariant vector bundle of rank $mr$, which descends to a vector bundle $\cV$ of rank $mr$ on $\cX$.

Let $x$ be a point of $\cX(K)$.  We begin by computing the stable height $\height^{\st}_\cV(x)$.  The Cartesian square
  \[
\xymatrix{
 \Spec L\ar[r]^{x_L} \ar[d] & X^m\ar[d] \\
 \Spec K  \ar[r]^{x}  & \cX
}
\]
provides an \'etale algebra $L$ over $K$ which carries an $S_m$-action, and a rational point $x_L$ which extends to an integral point $C \ra X^m$.  By Proposition~\ref{prop:stable-height-is-stable},
\beq
\height^{\st}_{\cV}(x) = [L:K]^{-1}\height^{\st}_{\widetilde{V}}(x_L).
\eeq
(Should $L$ be an \'{e}tale algebra which is not  a field but rather a direct sum $\oplus_i F_i$, our convention is that the height of a point of $X^m(L)$ is $\sum_i \height(P_i)$, where $P_i \in X^m(F_i)$ are the points corresponding to the restriction of $x_L\colon \Spec L \ra X^m$ to connected components of $\Spec L$.) 

Since $X^m$ is a scheme, we have
\beq
\height^{\st}_{\widetilde{V}}(x_L) = \height_{\widetilde{V}}(x_L).
\eeq
The latter quantity is a very natural one, what you might call the ``absolute height" of $x$.  Suppose, for instance, that $L/K$ is a field extension, necessarily Galois with Galois group $G$. Then $x_L$ is a point of $X^m(L)$ on which $\Gal(L/K)$ acts by permutations; in other words, it is an element $(\alpha_1, \ldots, \alpha_m)$ where the $\alpha_i$ are conjugate and each $\alpha_i$ is contained in a degree-$m$ extension $L_i/K$ whose Galois closure is $L$.  The (unordered) set $\alpha_1, \ldots, \alpha_m$ can be thought of as a $K$-rational Galois orbit of points on $X$, and the height of $x_L$ is then given by the usual Weil height on $X$:
\beq
\height_{\widetilde{V}}(x_L) = \sum_i \height_{V_0;L} \alpha_i = m \height_{V_0;L} \alpha_1
\eeq
where the subscript $L$ is indicating that the  height of $\alpha_i$ is understood to mean the height of $\alpha_i$ as a point of $X(L)$, not of $X(L_i)$; to sum up, this means that 
\beq
\height^{\st}_{\cV}(x) = |G|^{-1} \height_{\widetilde{V}}(x_L) = m |G|^{-1} \height_{V_0;L} \alpha_i = \height_{V_0;L_i} \alpha_i
\eeq
which is the same for every $i$.  In fact, the reader will note that nothing we did actually used the hypothesis that $L$ was a field, so the description of the stable height of $x$ is valid also in the case where $L$ is an \'etale algebra other than a field.  For instance, if $L$ splits completely as a product of copies of $K$, then $L_i$ is isomorphic to $K^m$, and our point $x \in \XX(K)$ may be thought of as an unordered $m$-tuple $\set{Q_1, \ldots, Q_m} \subset X(K)$; in that case,
\beq
\height^{\st}_{\cV}(x) =  \height_{V_0;L_i} (Q_1, \ldots Q_m) = \sum_{i=1}^m \height_{V_0;K} Q_i.
\eeq

We now consider the discrepancy $\delta_{\cV}(x) = \height_{\cV}(x) - \height^{\st}_{\cV}(x)$.

\begin{proposition} The value of $\delta_{\cV}(x)$ is the same for any $V_0$ and $V'_0$ of the same rank $r$.
\label{pr:indepofv0}
\end{proposition}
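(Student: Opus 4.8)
The plan is to reduce to the local discrepancies $\delta_{\cV;v}(x)$ and to show that each of them, as computed by the explicit formula of \S\ref{subsec:local-stacky-heights}, depends on $V_0$ only through $r=\rank V_0$. First, by Proposition~\ref{prop:ht-indep-of-tuning-stack} we may assume $\cC$ is the universal tuning stack, so that $\overline{x}\colon\cC\to\cX=[X^m/G]$ is representable. Then $C':=\cC\times_{\cX}X^m$ is finite over $X^m$ (base change of $\overline{x}$), hence a scheme; it is normal and one-dimensional because $C'\to\cC$ is a $G$-torsor with $G\subseteq S_m$ constant, hence \'etale, so $C'$ qualifies as the finite flat cover $p\colon C'\to\cC$ (of degree $|G|$) appearing in the definition of $\delta_{\cV;v}$. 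Since a $G$-torsor presents its base as a quotient stack, $\cC=[C'/G]$ and the coarse space $C$ of $\cC$ is $C'/G$. Writing $\Lambda$ for the $C'_v$-lattice associated to $p^*\overline{x}^*\cV^\vee$ as in \S\ref{subsec:local-stacky-heights}, the identification of $\overline{x}^*\cV^\vee|_{\Spec K_v}$ with $K_v^{\rank\cV}$ realizes $K_v^{\rank\cV}=(L_v^{\rank\cV})^G$, so $\Lambda\cap K_v^{\rank\cV}=\Lambda^G$, and the formula of that subsection reads
\[
\delta_{\cV;v}(x)=\tfrac{1}{|G|}\log\Bigl|\Lambda_v\big/\bigl(\Lambda_v^{G}\otimes_{\OO_{K_v}}\OO_{L_v}\bigr)\Bigr|.
\]
This quantity is determined by the $G$-equivariant $\OO_{L_v}$-module $\Lambda_v$ together with the $G$-ring $\OO_{L_v}$; the latter manifestly does not involve $V_0$, so it suffices to prove that $\Lambda_v$ is independent of $V_0$, up to $G$-equivariant $\OO_{L_v}$-isomorphism, once $\rank V_0=r$ is fixed.

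Now let $y\colon C'\to X^m$ be the induced map, which is strictly $G$-equivariant for the torsor action on $C'$ and the coordinate permutation action on $X^m$, and put $\beta_j:=\pi_j\circ y\colon C'\to X$. Then $p^*\overline{x}^*\cV^\vee=y^*\widetilde{V}^\vee=\bigoplus_{j=1}^m\beta_j^*V_0^\vee$, and equivariance of $y$ gives the strict equalities $\beta_j\circ\sigma=\beta_{\sigma^{-1}(j)}$, so the $G$-equivariant structure on this bundle permutes the summands through $G\subseteq S_m$. Decomposing $\{1,\dots,m\}$ into $G$-orbits, an orbit $O$ with stabilizer $G_j$ of a chosen $j\in O$ contributes $\bigoplus_{j'\in O}\beta_{j'}^*V_0^\vee\cong\operatorname{Ind}_{G_j}^{G}\bigl(\beta_j^*V_0^\vee\bigr)$, where $\beta_j^*V_0^\vee$ carries the $G_j$-equivariant structure coming from $\beta_j\circ\sigma=\beta_j$ for $\sigma\in G_j$. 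The crux is that this last structure has no twist in the fibre direction: $\beta_j^*V_0^\vee$ is literally the pullback of $V_0^\vee$ from $X$ equipped with the \emph{trivial} $G_j$-action. Working one maximal ideal $w\mid v$ of $\OO_{L_v}$ at a time (the decomposition group permutes them, which by induction reduces us to a single $w$ with decomposition group $H\subseteq G_j$), $\OO_{L_w}$ is a discrete valuation ring over which $\beta_j^*V_0^\vee$ is free of rank $r$, and a local trivialization of $V_0^\vee$ near the image of the closed point pulls back to an $H$-invariant trivialization, since $\sigma^*(\beta_j^*s)=(\beta_j\circ\sigma)^*s=\beta_j^*s$. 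Hence $\beta_j^*V_0^\vee|_{\OO_{L_w}}\cong\OO_{L_w}^{\oplus r}$ with $H$ acting purely through $\OO_{L_w}$; reassembling via induction and then over the $G$-orbits produces a $G$-equivariant isomorphism $\Lambda_v\cong(\OO_{L_v})^{\oplus\rank\cV}$ whose $G$-action is pinned down by the fixed combinatorics of $G\subseteq S_m$, the decomposition behaviour of $C'\to C$ at $v$, and $r$ alone. Summing over $v$ then shows $\delta_{\cV}(x)$ is independent of $V_0$ given $r$.

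The one step that needs genuine care is the claim that the $G_j$-equivariant structure on $\beta_j^*V_0^\vee$ introduces no automorphism in the fibre direction; this is where it matters that $\beta_j\circ\sigma=\beta_j$ holds on the nose as morphisms of schemes (not merely up to $2$-isomorphism), which is legitimate because $C'$ and $X^m$ are honest schemes with honest $G$-actions and $y$ is strictly equivariant. Granting this, the rest is routine bookkeeping with induced representations: choosing local trivializations at each $w\mid v$ and transporting them across the decomposition orbit and across the $G$-orbits of $\{1,\dots,m\}$ to assemble a single global $G$-equivariant isomorphism. I would also remark that nothing here depends on the particular cover $C'$: the sheaf $M(\overline{x}^*\cV^\vee)$ lives on $\cC$ itself and is supported on the stacky locus (away from which $\pi$, and hence the counit, is an isomorphism), so the local discrepancies are intrinsic and $C'$ only serves to make the displayed formula explicit.
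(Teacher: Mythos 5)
Your proof is correct and follows the same essential strategy as the paper's: reduce to the local discrepancies, pull back along a finite cover encoding the $G$-action, and observe that the dependence on $V_0$ disappears once $V_0$ is trivialized locally over a common open of $X$ containing the (finitely many) image points. You make the crux explicit — that the strict equality $\beta_j\circ\sigma=\beta_j$ for $\sigma$ in the stabilizer forces the equivariant structure to act only through the base and not through fibre automorphisms — whereas the paper's version asserts the resulting $G$-equivariant isomorphism $\oplus_i q_i^* V_0 \cong \oplus_i q_i^* V_0'$ more tersely after trivializing both over a common $U\subset X$, but the mathematical content is the same.
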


\begin{proof} We write $\widetilde{V}',\cV'$ for the vector bundles on $X^m$ and $\cX$ respectively obtained from $V'_0$ as $\widetilde{V},\cV$ were obtained from $V_0$.

The discrepancy is a sum of local terms $\delta_{v;\cV}(x)$ where $v$ ranges over a finite list of non-archimedean places $v$ of $C$ where $x$ does not extend to an $\OO_{K_v}$-point; in particular, this list depends only on $x$, not on the choice of $\cV$.  Choose such a $v$; denoting by $\cC_v$ the infinitesimal neighborhood of the tuning stack $\cC$ over $v$, we have a commutative diagram
  \[
\xymatrix{
 \Spec \OO_{L_v} \ar[r]^-{\overline{x}_{L_v}} \ar[d] & X^m\ar[d] \\
 \cC_v  \ar[r]^{\overline{x}_{K_v}}  & \cX
}
\]
where $L_v$ denotes $L \tensor_K K_v$, so $\OO_{L_v}$ is a disjoint union of dvrs.  Composing $\overline{x}_{L_v}$ with the projection maps $p_1, \ldots, p_m$ yields maps $q_1, \ldots, q_m\colon \Spec \OO_{L_v} \ra X$ which are permuted by composition with the action of $G$ on $\Spec \OO_{L_v}$.  We may take $U \subset X$ to be an open subscheme containing the image of the $q_i$ on which $V_0$ and $V'_0$ become isomorphic (and indeed we may choose $U$ to make both isomorphic to $\OO_U^r$). 

Now $\overline{x}_{L_v}^* \widetilde{V}$ can be described as $\oplus_i q_i^* V_0$, where the action of $G$ permutes the factors; we note that this is {\em $G$-equivariantly} isomorphic to $\overline{x}_{L_v}^* \widetilde{V}' =  \oplus_i q_i^* V'_0$.  Thus,  the vector bundle $\overline{x}_{K_v}^* \cV$, which is the descent of $\overline{x}_{L_v}^* \widetilde{V}$, is isomorphic to $\overline{x}_{K_v}^* \cV'$.   Since $\delta_{v;\cV}(x)$ depends only on $\overline{x}_{L_v}^* \widetilde{V}$, we conclude that
\beq
 \delta_{v;\cV}(x) = \delta_{v;\cV'}(x)
\eeq
as desired.
\end{proof}

Given Proposition~\ref{pr:indepofv0}, we are free to take $V_0 = \OO_X^r$ when computing $\delta_{\cV}(x)$.  In this case, $V$ is the direct sum of $r$ copies of the vector bundle on $\cX$ obtained by taking $V_0 = \OO_X$; so we may simply take $V_0 = \OO_X$ and multiply by $r$ at the end.  

In this case, we can describe $\cV$ very concretely; in the diagram
\[
\xymatrix{
 X^m \ar[r]^{} \ar[d] & \ast \ar[d] \\
 \cX  \ar[r]^{c}  & BG
}
\]
the rank-$m$ vector bundle $\cV$ on $\cX$ is just $c^* \rho$, where $\rho$ is the rank-$m$ vector bundle on $BG$ corresponding to the $m$-dimensional permutation representation of $G$ afforded by our embedding $G \inj S_m$.  This description makes it easy to compute $\height_{\cV}(x)$.  Extending the diagram above to
  \[
\xymatrix{
 \Spec L\ar[r]^{x_L} \ar[d] & X^m\ar[r] \ar[d] & \ast \ar[d]\\
 \Spec K  \ar[r]^{x}  & \cX \ar[r]^{c} & BG
}
\]
we have that $\height_{\cV}(x) = \height_{\rho}(c \circ x)$, where $c \circ x$ is the morphism from $\Spec K$ to $BG$ corresponding to the etale $G$-extension $L/K$.  It follows from Proposition \ref{pr:heightdisc} that $\height_{\rho} (c \circ x) = (1/2) \log \Delta_{L_i/K}$ (which is the same for all $i$).  The pullback of $\rho$ to $\ast$ is trivial, so $\height^{\st}_{\rho}$ is identically $0$, whence the discrepancy $\delta_{\rho}(c \circ x)$ is also $(1/2) \log \Delta_{L_i/K}$.  We can now conclude from the discussion above that, for any choice of $V_0$,
\beq
\delta_{\cV}(x) = (r/2) \log \Delta_{L_i/K}.
\eeq
Combining this with our computation of $\height^{\st}_{\cV}$, we finally arrive at a description of the height of a rational point on $\cX$ with respect to $\cV$.  Recall that a point $x \in \cX(K)$ provides us with a degree-$m$ etale extension $L_1/K$ and a point $\alpha_1 \in X(L_1)$.  Denote by $\height^W_{L_1}(\alpha_1)$ the usual Weil height of $\alpha_1$ under the map $X(L_1) \ra \R$ afforded by $V_0$.  Then
\beq
\height_{\cV}(x) = \height^W_{L_1}(\alpha_1)+ (r/2) \log \Delta_{L_1/K}.
\eeq

\section{Counting rational points by height:  a conjecture of Batyrev--Manin--Malle type}
\label{sec:count-rati-points}

In this section, we formulate a conjecture of Batyrev--Manin--Malle type for rational points of bounded height on a stack $\cX$. When $\cX$ is a scheme, we recover the weak Batyrev--Manin conjecture about rational points on schemes; when we take $\cX=BG$, we recover the weak Malle conjecture. We thus think of our conjecture as interpolating between the two conjectures, while at the same time generating many new cases of interest.  As was the case for the original Batyrev--Manin, we develop our heuristics by consideration of the case $K = k(t)$ and the corresponding geometric problem of   studying spaces of rational curves on $\cX$.

By ``weak" in the above paragraph we mean that we propose conjectures that bound counting functions between $X^a$ and $X^{a+\epsilon}$ for a specified exponent $a$.  The ``strong" versions of Batyrev--Manin and Malle make a more precise conjecture, that counting functions are asymptotic to $X^a (\log X)^b$ for specified $a,b$.  In work posted after the original version of this paper was released, Darda and Yasuda~\cite{dardayasudabatman} have proposed a ``strong" conjecture about point-counting on stacks, with an explicit predicted power of $\log X$.

One could go further still and ask whether the counting functions discussed here are of the form $c X^a (\log X)^b + o(X^a (\log X)^b)$, with an explicit constant $c$; this has been quite an active area of investigation in both the Batyrev--Manin and the Malle context.  One remark in this regard: to get constants right, it is presumably important to remember that $\XX(K)$ is naturally not a set but a groupoid, and counts of points should probably be weighted inversely to the size of the point's automorphism group.  But issues of this kind will not be relevant for the coarser heuristics considered here.

\subsection{Expected deformation dimension: stacky anti-canonical height}
\label{subsec:edd}

In the Batyrev--Manin Conjecture for a scheme $X$, when counting rational points with respect to a line bundle $\cL$, the expected growth rate is given by $B^{a(\cL)}$ where the \emph{Fujita invariant} $a(\cL)$ is the infimum of all $a$ for which $a\cL+K_X$ is effective. A technical hurdle we must overcome in defining $a(\cL)$ for stacks $\cX$ is that for many stacks of interest, e.g.~$\cX=BG$, the canonical bundle $K_\cX$ is trivial! Thus, the anti-canonical height is not suitable for the purposes of obtaining the expected growth rate of point counts on stacks. Our solution is to introduce a new quantity, the expected deformation dimension (or $\edd$), which is a suitable perturbation of the anti-canonical height.

Before giving the definition of $\edd$, we wish to sufficiently motivate it through geometric intuition. In the case of a proper scheme $X$ over a function field $\CC(t)$, a rational point $x\colon\Spec\CC(t)\to X$, by the valuative criterion, extends to a map $\overline{x}\colon\PP^1\to X$. By Riemann--Roch, the anti-canonical height $\height_{-K_X}(x)=\deg(\overline{x}^*T_X)$ differs from $\chi(\overline{x}^*T_X)$ by a constant, and $\chi(\overline{x}^*T_X)$ is the expected dimension of the deformation space of $\overline{x}^*$.

The deformation theoretic point of view serves as our launching point for the definition of $\edd$. Given a rational point $x\colon\Spec K\to\cX$ of a stack, we can extend $x$ to a \emph{universal} tuning stack $\overline{x}\colon\cC\to\cX$, see Definition \ref{def:tuning-stack}. The expected deformation dimension of $\overline{x}$ is then given by $\chi(L^\vee_{\overline{x}}[1])$ where $L_{\overline{x}}$ is the cotangent complex for the representable map $\overline{x}$. For the sake of motivational purposes, suppose both $\cX$ and $\cC$ are smooth tame Deligne--Mumford stacks, in which case the tangent complexes $L^\vee_{\cX}$ and $L^\vee_{\cC}$ are vector bundles, denoted by $T_\cX$ and $T_\cC$. Then
\[
\chi(L^\vee_{\overline{x}}[1])=\chi(\overline{x}^*T_\cX)-\chi(T_\cC),
\]
which up to constants are the same as
\begin{equation}
\label{eqn:prelim-edd-def}
\deg(\pi_*\overline{x}^*T_\cX)-\deg(\pi_*T_\cC).
\end{equation}
Note that $\deg(\pi_*\overline{x}^*T_\cX)=-\height_{K_\cX}(x)$. We next calculate $\deg(\pi_*T_\cC)$. Letting $\pi\colon\cC\to C$ be the coarse space, we have

\begin{equation}
\label{eqn:stacky-curve-canonical}
  \Omega^1_\cC=\pi^*\Omega^1_C\otimes\O_\cC\left(\sum (1-e_p^{-1})p\right)
\end{equation}
by \cite[Lemma 5.5.3 and Proposition 5.5.6]{voightZB:canonical-ring-of-a-stacky-curve}. So,
\[
\pi_*T_\cC=T_C\otimes\O_C\left(\sum \left\lfloor e_p^{-1} - 1 \right\rfloor p\right)=T_C(-R);
\]
since the floors are equal to $-1$ if $e_p$ is nontrivial and 0 otherwise, $R$ is the divisor given by the ramified points taken without multiplicity. So, up to constants, $\deg(\pi_*T_\cC)=-\deg(R)$.

In practice, however, we will want to consider stacks $\cX_0$ over $K$ for which we do not have in mind a particular model $\XX/C$ which is normal and Deligne--Mumford, or for which we do have in mind a model but it isn't Deligne--Mumford; for example, we don't want to exclude a stack like $B\mu_n / \Spec \Z$ which fails to be Deligne--Mumford in characteristics dividing $n$. Tuning stacks for rational points of such stacks are also generally not Deligne--Mumford.  Presumably a more complicated definition involving the tangent \emph{complex} would work, but in the interest of simplicity we have chosen for now to apply a technical work-around.

  First, the universal tuning stack $\cC$ of a rational point $x \in \cX(K)$ is generically a scheme (and thus generically Deligne--Mumford). The coarse space map $\pi\colon\cC\to C$ is birational and $\cC$ is normal; if $\cC$ is tame then it is a root stack. To promote our working definition of edd (Equation \ref{eqn:prelim-edd-def}) to the general setting we are tempted to define
  \[
    \Omega^{1,\fake}_\cC=\pi^*\Omega^1_C\otimes\O_\cC\left(\sum (1-e_p^{-1})p\right).
  \]
If $p$ is a Deligne--Mumford point of $\cC$ but is not tame, then one defines $e_p$ via wild ramification \cite[Proposition 7.1]{kobin:Artin-Schreier-Root-Stacks}. But if $p$ is not a Deligne--Mumford point it is unclear how to define $e_p$. If $p$ is tame, then the stabilizer of $p$ is isomorphic to $\mu_m$ for some integer $m$, and it is tempting to define $e_p$ to be $1/m$. This is ad hoc, but worse, not general enough: the stabilizer could be a group which is neither \'etale nor tame (such as $\mu_p \times \Z/p\Z$). 

Our perspective is that the precise definition of $e_p$ does not matter, as long as it is nontrivial at a stacky point. What we mean is: for the part of the definition of edd that relies on the universal tuning stack we only ever consider the quantity ``$\deg(\pi_*T_\cC)$''. Since $T_\cC$ is the dual of $\Omega^{1,\fake}_\cC$, 
  \[
\pi_*T_\cC=T_C\otimes\O_C\left(\sum \left\lfloor e_p^{-1} - 1 \right\rfloor p\right).
\]
In particular, since we are taking floors the quantity $\left\lfloor e_p^{-1} - 1 \right\rfloor$ is 0 is $p$ is not stacky and is $-1$ otherwise. In Equation \ref{eqn:prelim-edd-def} we thus abstain from defining $T_\cC$ and instead replace $\deg(\pi_*T_\cC)$ with the following quantity.

\begin{definition}[Reduced discriminant]
  \label{definition:reduced-discriminant}
  Let $\pi \colon \cC \to C$ be a tuning stack of a rational point $x \in \cX(K)$. We define the \defi{reduced discriminant} $\rDisc(x)$ of $x$ to be the sum
\[
\rDisc(x) = \sum \log q_v
\]
over the stacky points $v$ of $\cC$, where $q_v$ is the cardinality of the residue field of the point $v$.
\end{definition}

To make sense more generally of the other term of Equation \ref{eqn:prelim-edd-def}, for the rest of this section, in addition to the assumptions of Subsection \ref{subsec:not-conv} and Section~\ref{sec:Heights of rational points on stacks}, we assume that the generic fiber $\cX_K$ of our proper Artin stack $p\colon \cX \to C$ is Deligne--Mumford, so that it makes sense to talk about the canonical sheaf $K_{\cX_K}$ of the generic fiber.
\begin{definition}
We say a line bundle on $\XX$ is \defi{generically canonical} if its restriction to $\cX_K$ is $K_{\cX_K}$.  
\end{definition}

We now define $\edd$ as follows, guided by the motivation above.

\begin{definition}[Expected deformation dimension]
  \label{def:edd}
Let $K$ be a global field and let $C$ be either $\Spec O_K$ in the number field case or a smooth proper curve with function field $K$ in the function field case.  Let $\cX$ be a proper Artin stack over $C$ with finite diagonal such that $\cX$ is a smooth proper Deligne--Mumford stack over $K$.  Let $\widetilde{K}$ be a generically canonical line bundle on $\cX$.  Given $x \in \cX(K)$, let $(\cC,\overline{x},\pi)$ be its universal tuning stack. The \defi{expected deformation dimension} of $x$ is
\[
\edd(x):=-\height_{\widetilde{K}}(x)+\rDisc(x).
\]
\end{definition}

\begin{remark}
Implicit in this definition is a conjecture:  that the definition is independent of choices.  More precisely, we expect that, given two different models of $\cX_K$, and two different extensions of $K_{\cX_K}$ to these models, the two functions $\edd(x)$ would differ by a function that is bounded as $x$ ranges over $\cX(K)$.  In the examples that follow, we will simply choose a model $\cX$ and choose a generically canonical line bundle on $\cX$.
\end{remark}

\begin{remark}\label{rmk:edd-special-cases}
If $\cX=X$ is a scheme, then the universal tuning stack is a curve, and $\edd$ agrees with the anti-canonical height since $\edd(x):=-\height_{K_X}(x)=\deg(\overline{x}^*T_X)=\height_{-K_X}(x)$. On the other extreme, if $\cX=BG$ then $K_\cX$ is trivial, so $\edd(x)$ is the reduced discriminant of the field extension corresponding to $x$.
\end{remark}

\begin{example}[Extending a stacky curve and its canonical bundle]\label{example:stacky-curve-model}
  Let $\cX_0$ be a smooth tame Deligne--Mumford stacky curve over $K$ and suppose that the coarse space map $\phi_0\colon \cX_0 \to X_0$ is birational (equivalently, $\cX_0$ has trivial generic inertia).   By  \cite[Theorem 1 and Remark 4]{geraschenkoS:bottomUp}, such an $\cX_0$ is isomorphic to a root stack over its coarse space. Let $p_1,\ldots,p_k \in X_0$ be the ramification locus of $\phi_0$; since $\cX_0$ is a root stack, the stabilizer group over each $p_i$ is isomorphic to $\mu_{e_i}$ for some integer $e_i \geq 2$, and $\cX_0$ is the root stack of $X_0$ rooted along each $p_i$ with order $e_i$.

  The coarse space $X_0$ is a smooth proper curve over $K$ and extends to a proper relative curve $X \to C$. Let $D_i$ be the closure of $p_i$. After a possible normalization and sequence of blow ups, we can assume that $X$ is regular and that the $D_i$ do not intersect each other or the singular points of the fibers of $X \to C$. Define $\phi \colon \cX \to X$ to be the root stack of $X$ rooted along each $D_i$ with order $e_i$. The relative stacky curve $\cX$ is a model of $\cX_0$ and is tame. If there is some point $v$ of $C$ and some $i$ such that the residue characteristic of $v$ divides $e_i$, then $\cX$ is an Artin stack which is not Deligne--Mumford; if $C = \Spec \mathcal{O}_K$ for some number field $K$, then there is always some such $v$ and $i$.

  As discussed above (see Equation \ref{eqn:stacky-curve-canonical}) the canonical sheaf of $\cX_0$ is
\[
  \Omega^1_{\cX_0}=\phi_0^*\Omega^1_{X_0}\otimes\O_{\cX_0}\left(\sum (1-e_i^{-1})p_i\right).
\]  
Define
\[
  \Omega^{1,\fake}_{\cX}=\phi^*\omega_{X/C}\otimes\O_{\cX}\left(\sum (1-e_i^{-1})D_i\right)
\]
by the same ``formula''. Then $\Omega^{1,\fake}_{\cX}$ is a generically canonical sheaf.
\end{example}  

We have seen in Remark~\ref{rmk:edd-special-cases} that when $\cX$ is a scheme, $\edd$ agrees with anticanonical height, i.e., the height of the tangent bundle.  It turns out that the same identity holds when $\cX$ is a smooth, tame Deligne--Mumford stacky curve with no generic inertia, at least away from the accumulating subvarieties.

\begin{proposition}[Curves with stacky points]
\label{pr:eddcurve}
Let $\cX_0$ be a smooth tame Deligne--Mumford stacky curve over $K$ and suppose that $\cX_0$ is birational to its coarse space. Let $\cX$ be the model of $\cX_0$ given by extending the root data as in Example \ref{example:stacky-curve-model} and let $T_{\cX}$ be the dual of the generically canonical bundle from Example \ref{example:stacky-curve-model}. Let $x$ be a point of $\cX(K)$.  Then
\beq
\edd(x) = \height_{T_{\cX}}(x). 
\eeq

\end{proposition}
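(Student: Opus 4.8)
The plan is to unwind the two sides into line-bundle degrees on $C$ and match them by a local computation at the stacky points of the tuning stack. Write $\widetilde{K}=\Omega^{1,\fake}_{\cX}$ for the generically canonical line bundle of Example~\ref{example:stacky-curve-model}, so $T_{\cX}=\widetilde{K}^{\vee}$; let $(\cC,\overline{x},\pi)$ be the \emph{universal} tuning stack of $x$, with $\pi\colon\cC\to C$ the coarse space; and set $\cM:=\overline{x}^{*}\widetilde{K}$, a line bundle on $\cC$, so that $\overline{x}^{*}T_{\cX}=\cM^{\vee}$. By Definition~\ref{def:edd} and Definition~\ref{def:height-of-rat-pt},
\[
\edd(x)=-\height_{\widetilde{K}}(x)+\rDisc(x)=\deg_{C}\pi_{*}\cM^{\vee}+\rDisc(x),\qquad \height_{T_{\cX}}(x)=-\deg_{C}\pi_{*}\cM,
\]
so the proposition is equivalent to the identity $\deg_{C}\pi_{*}\cM+\deg_{C}\pi_{*}\cM^{\vee}=-\rDisc(x)$. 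I would prove this identity, the key point being that $\cM=\overline{x}^{*}\widetilde{K}$ is nontrivial on the residual gerbe at \emph{every} stacky point of $\cC$ --- which is exactly where universality of $\cC$ enters.

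The first step is to record the local structure of $\cC$. Since $\pi\colon\cC\to C$ is a birational coarse space map with $\cC$ normal of dimension one, and since $\cX$ is tame with all stabilizers of the form $\mu_{r}$ (hence so is $\cC$, which is representable over $\cX$; cf.\ Remark~\ref{remark:tuning-stacks-inherit-properties}), Appendix~\ref{ss:one-dimensional-artin} (together with \cite{geraschenkoS:bottomUp} in the Deligne--Mumford case) describes $\cC$ as a root construction on $C$ at finitely many nonarchimedean closed points $Q_{1},\dots,Q_{N}$ of orders $r_{1},\dots,r_{N}\ge2$, with tautological root divisors $\widetilde{Q}_{k}$ satisfying $\pi^{*}Q_{k}=r_{k}\widetilde{Q}_{k}$; put $d_{k}=\deg_{C}Q_{k}=\log q_{Q_{k}}$. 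Every line bundle on $\cC$ is determined up to pullback from $C$ by its weights on the residual $\mu_{r_{k}}$-gerbes, so I may write $\cM=\pi^{*}\mathcal{N}\otimes\OO_{\cC}\bigl(\sum_{k}a_{k}\widetilde{Q}_{k}\bigr)$ with $\mathcal{N}$ a line bundle on $C$ and $0\le a_{k}\le r_{k}-1$, where $a_{k}\bmod r_{k}$ is the weight of $\cM$ at $Q_{k}$. Since $\pi_{*}\OO_{\cC}\bigl(\sum_{k}b_{k}\widetilde{Q}_{k}\bigr)=\OO_{C}$ whenever $0\le b_{k}\le r_{k}-1$ (standard for root stacks, cf.\ \cite{voightZB:canonical-ring-of-a-stacky-curve}), and since $-a_{k}\widetilde{Q}_{k}=(r_{k}-a_{k})\widetilde{Q}_{k}-\pi^{*}Q_{k}$ for $a_{k}\neq0$, the projection formula gives
\[
\pi_{*}\cM=\mathcal{N},\qquad \pi_{*}\cM^{\vee}=\mathcal{N}^{\vee}\otimes\OO_{C}\Bigl(-\!\!\sum_{k\,:\,a_{k}\neq0}\!\!Q_{k}\Bigr),
\]
both line bundles by Corollary~\ref{coro:vb-pushforward}, hence $\deg_{C}\pi_{*}\cM+\deg_{C}\pi_{*}\cM^{\vee}=-\sum_{k:\,a_{k}\neq0}d_{k}$. (In the number field case these are metrized line bundles, but the $Q_{k}$ are nonarchimedean, so the archimedean metrics cancel in the sum exactly as above.)

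It then remains to show $a_{k}\neq0$ for every $k$: granting this, $\sum_{k:\,a_{k}\neq0}d_{k}=\sum_{k}d_{k}=\rDisc(x)$ by Definition~\ref{definition:reduced-discriminant}, and the identity is proved. For this I would pass to residual gerbes. By Example~\ref{example:stacky-curve-model}, $\widetilde{K}=\phi^{*}\omega_{X/C}\otimes\OO_{\cX}\bigl(\sum_{i}(e_{i}-1)\widetilde{D}_{i}\bigr)$, where $\widetilde{D}_{i}$ is the $e_{i}$-th root divisor of $D_{i}$ (so $\phi^{*}D_{i}=e_{i}\widetilde{D}_{i}$); restricting to the residual $\mu_{e_{i}}$-gerbe $\G_{i}$ over $D_{i}$, the factor $\phi^{*}\omega_{X/C}$ restricts to the trivial character and $\OO_{\cX}(\widetilde{D}_{i})$ to the tautological character, so, since $\gcd(e_{i}-1,e_{i})=1$, $\widetilde{K}|_{\G_{i}}$ is a primitive character of $\mu_{e_{i}}$ (a generator of its character group $\Z/e_{i}$). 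Now let $Q_{k}$ be a stacky point of $\cC$. Because $\cC$ is the universal tuning stack, $\overline{x}$ is representable (Corollary~\ref{cor:universal tuning stack iff representable}), so the induced map $\mu_{r_{k}}\to\stab_{\cX}(\overline{x}(Q_{k}))$ is a monomorphism; as $r_{k}\ge2$ the target is nontrivial, forcing $\overline{x}(Q_{k})$ into the stacky locus $\bigsqcup_{i}\G_{i}$ of $\cX$, say into $\G_{i(k)}$, and the monomorphism is the inclusion $\mu_{r_{k}}\hookrightarrow\mu_{e_{i(k)}}$ (so $r_{k}\mid e_{i(k)}$). Pulling a generator of $\Z/e_{i(k)}$ back along this inclusion gives a generator of $\Z/r_{k}$, which is nontrivial since $r_{k}\ge2$; thus $a_{k}\not\equiv0\pmod{r_{k}}$, as required.

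The only genuinely non-formal input is the local structure of the universal tuning stack near a stacky point and the behaviour of $\pi_{*}$ there (second paragraph), for which I would appeal to Appendix~\ref{ss:one-dimensional-artin}; everything else is bookkeeping with $\mu$-weights. I would emphasize that universality of $\cC$ is essential: for a non-universal tuning stack --- e.g.\ one obtained by rooting $\cX$ further at a non-stacky point, as in the remark following Corollary~\ref{cor:universal tuning stack iff representable} --- the pullback $\overline{x}^{*}\widetilde{K}$ has weight $0$ at the extra stacky point, the identity $\deg_{C}\pi_{*}\cM+\deg_{C}\pi_{*}\cM^{\vee}=-\rDisc$ fails, and indeed $\rDisc(x)$ itself changes; this is consistent with $\edd$ being defined via the universal tuning stack.
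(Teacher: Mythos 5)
Your proof is correct, and it takes a genuinely different route from the paper's. The paper works purely locally: it starts from $\height^{\st}_{T_{\cX}}(x)+\height^{\st}_{T_{\cX}^\vee}(x)=0$, then at each closed point $v$ computes the two local discrepancies $\delta_{T_{\cX};v}(x)$ and $\delta_{T_{\cX}^\vee;v}(x)$ explicitly using the lattice/ideal picture of \S\ref{subsec:local-stacky-heights} --- writing $\overline{x}^*T_{\cX}$ over $\OO_{K_v}$ as $\pi_w^{-k}\OO_{L_w}$ where $k$ is the intersection multiplicity of $x$ with the stacky point $p$ it reduces to and $m=|\stab(p)|$ --- and shows their sum is $((-k/m)-\lfloor -k/m\rfloor+k/m-\lfloor k/m\rfloor)\log q_v$, which is $\log q_v$ unless $m\mid k$. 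Your proof instead works globally with the Picard group of the root stack $\cC$: you decompose $\cM=\pi^*\cN\otimes\OO_{\cC}(\sum a_k\widetilde Q_k)$, compute both pushforwards with the projection formula, and reduce everything to showing $a_k\neq 0$ at each stacky point, which you get from the representability of $\overline{x}$ on the universal tuning stack combined with the fact that $\widetilde{K}=\Omega^{1,\fake}_{\cX}$ restricts to a generator $(-1\bmod e_i)$ of the character group of each residual gerbe.

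The two arguments are of roughly the same length, but they put the weight in different places. Your version makes the role of universality of $\cC$ crisp and self-contained --- representability of $\overline{x}$ forces $\mu_{r_k}\hookrightarrow\mu_{e_{i(k)}}$, and a generator pulls back to a generator --- whereas in the paper this is handled more implicitly by the remark that if $m\mid k$ one can take $\cC$ to be a scheme near $v$, so the set of stacky points computing $\rDisc(x)$ is exactly the set of $v$ with $m\nmid k$. Your version is also cleaner about the archimedean places (they cancel via $\deg\cN+\deg\cN^\vee=0$ since the dual metric is dual), while the paper's local-discrepancy framing makes it visibly parallel to the computations done for footballs in \S\ref{ss:footballs} and for $BG$ in \S\ref{subsec:BG}. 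Both are fine; yours reads a bit more like intersection theory on stacky curves, the paper's more like arithmetic on local fields.

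One small presentational point worth flagging: you silently use that $\cC$, being tame (because $\cX$ is tame and $\overline{x}$ is representable), is a root stack over $C$, citing \cite{geraschenkoS:bottomUp}. That's fine, but worth stating explicitly since the paper's Example~\ref{example:stacky-curve-model} invokes the same result only for the original $\cX_0$ over the generic fiber, not for the tuning stack over the integral base; the hypotheses (finite inertia, normal, one-dimensional, tame, coarse map birational) hold for $\cC$ as well, but the reader should not have to notice that on their own.
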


\begin{proof}

Let $\cC$ be a tuning stack and $\overline{x}\colon \cC \ra \cX$ the extension of $x$, as usual.  The pullback $\overline{x}^* T_{\cX}^\vee$ is a line bundle on $\cC$.  We first note that 
\beq
\height_{T_{\cX}}(x) + \height_{T_{\cX}^\vee}(x) = \sum_v (\delta_{T_{\cX};v}(x) + \delta_{T_{\cX}^\vee;v}(x))
\eeq
since
\beq
\height^{\st}_{T_{\cX}}(x) + \height^{\st}_{T_{\cX}^\vee}(x) = 0.
\eeq

For each closed point $v$ of $C$, the point $x$ either reduces to a non stacky point or reduces to a unique stacky point $p$ with stabilizer group $\mu_m$ for some integer $m \geq 2$. Let $k$ be the multiplicity of the reduction of $x$ to $p$ (i.e., the multiplicity of the intersection of the images of $x$ and $p$ in the coarse space $X$). If $m$ divides $k$ then we can take the tuning stack $\cC$ to be a scheme in a neighborhood of $v$, in which case the discrepancies are 0.  Otherwise $\cC_v$ is a root stack which can be resolved by adjoining to $K_v$ an $m$th root of a uniformizer.  Denote the resulting field extension by $L_w$.  So as in Section~\ref{subsec:local-stacky-heights}, the restriction of $\overline{x}^* T_{\cX}$ to $\OO_{K_v}$ is identified with an ideal $\Lambda$ in $\OO_{L_w}$, and we have
\beq
 \delta_{T_{\cX};v}(x) = (1/m) \log \left| \frac{\Lambda}{(\Lambda \cap K_v) \tensor_{\OO_{K_v}} \OO_{L_w}} \right|.
\eeq
Taking $\pi_w$ to be a uniformizer of $\OO_{L_w}$, we may write $\Lambda = \pi_w^{-k} \OO_{L_w}$, and so 
\beq
\delta_{T_{\cX};v}(x) = ((-k/m) - \lfloor -k/m \rfloor) \log q_v.
\label{eq:deltarootedp1}
\eeq

The restriction $\overline{x}^* T_{\cX}^{\vee}$, by the same argument, is identified with the ideal $\pi_w^{k} \OO_{L_w}$.  We conclude that
 \beq
 \delta_{T_{\cX};v}(x) +  \delta_{T_{\cX}^\vee;v}(x) = ((-k/m) - \lfloor -k/m \rfloor + k/m - \lfloor k/m \rfloor) \log q_v
 \eeq
 which is $\log q_v$ unless $m|k$, in which case it is zero.  In other words,
 \beq
 \height_{T_{\cX}}(x) + \height_{T_{\cX}^\vee}(x) = \sum_v (\delta_{T_{\cX};v}(x) + \delta_{T_{\cX}^\vee;v}(x)) = \rDisc(x)
\eeq
since $\rDisc(x)$ is precisely the sum of $\log q_v$ over the stacky points $v$ of $\cC$.  We conclude that 
\beq
\edd(x) = -\height_{T_{\cX}^\vee}(x) +  \rDisc(x) =  \height_{T_{\cX}}(x)
\eeq
as claimed.
\end{proof}

\begin{remark}
  If $\cX'$ is a second model of the stacky curve $\cX_0$ from Proposition \ref{pr:eddcurve} and if $\cX'$ is tame, one can show that away from finitely many points of $C$, $\cX'$ is a root stack and isomorphic to $\cX$; shrinking $C$ further the generically canonical sheaves agree.  By Proposition~\ref{pr:boundeddisc}, the value of $\delta_{T_{\cX'};v}(x) + \delta_{T_{\cX'}^\vee;v}(x)$ is bounded on $\cX'(K)$, and thus the $\edd$ associated to the model $\cX'$ will only differ by a constant which depends on $\cX_0$ and $K$.
\end{remark}

\subsection{Weak form of the Stacky Batyrev--Manin--Malle Conjecture}
\label{subsec:weak-stacky-batman-conj}

Having now defined $\edd$, we are ready to state a heuristic for counting rational points of bounded height on a stack. We then show that our heuristic recovers the weak form of the Batyrev--Manin when $\X$ is a scheme, and recovers the weak form of the Malle conjecture when $\X=BG$. 

Of course, we cannot expect to count points of bounded height unless the height function satisfies some kind of positivity property.  In the Batyrev--Manin setting, this is achieved by restricting to heights corresponding to ample line bundles.  One does not have as clear a geometric picture of vector bundles on stacks as one does in the setting of line bundles on schemes, so we use for the moment the following definition.  We recall that stable height is well-behaved under field extension (Proposition \ref{prop:stable-height-is-stable}), so we can define an absolute $\height^{\st;\abs}_{\cV}$ as a function on $\cX(\overline{K})$ by the usual rule:
\beq
\height^{\st;\abs}_{\cV}(x) = [L:K]^{-1}\height^{\st}_{\cV}(x)
\eeq
for points of $\cX(L)$.

\begin{definition} We say a vector bundle $\cV$ on a stack $\cX$ is \defi{semipositive} if the quantity $\height^{\st;\abs}_{\cV}(x)$ is bounded below on $\cX(\overline{K})$.

\label{def:gsp}
\end{definition}

We note that the property of being semipositive is stable under field extensions by Remark \ref{rmk:ht-under-pullback-of-V}.

\begin{definition} 
Let $f$ be a real-valued function on $\cX(\overline{K})$.  We say $f$ is \defi{generically bounded below} if there is a proper closed substack $\cZ$ of $\cX$ and a constant $B$ such that the set of $x \in \cX(\overline{K})$ such that $f(x) < [K(x):K] \cdot B$ is contained in $\cZ(\overline{K})$, where $K(x)$ is the residue field of $x$.
\label{def:gbb}
\end{definition}

Suppose $\cV$ is a semipositive vector bundle on $X$. We consider the function
\beq
D_{a,\cV}(x) = a \height_{\cV}(x) - \edd(x)
\eeq
on $\cX(\overline{K})$.  We note that if $a' > a$ then
\beq
D_{a',\cV}(x) = D_{a,\cV}(x) + (a'-a) \height_{\cV}(x) \geq D_{a,\cV}(x) + (a'-a)\height^{\st}_{\cV}(x) =  D_{a,\cV}(x) + (a'-a) [K(x):K]\height^{\st;\abs}_{\cV}(x).
\eeq

Since $\cV$ is semipositive, for fixed $a'$ and $a$ the quantity
\[
  (a'-a) [K(x):K]\height^{\st;\abs}_{\cV}(x) > (a'-a) \height^{\st;\abs}_{\cV}(x)
\]
is bounded below on $\cX(\overline{K})$.  It follows that if $D_{a,\cV}$ is generically bounded below, so is $D_{a',\cV}$.  So the set of $a$ such that $D_{a,\cV}(x)$ is generically bounded below is an interval, extending infinitely in the positive direction.

\begin{definition} With notation as above, the \defi{Fujita invariant} $a(\cV)$ of a semipositive $\cV$ is the infimum of all positive real numbers $a$ such that $D_{a,\cV}$ is generically bounded below.  If $D_{a,\cV}$ is never generically bounded below we take $a = \infty$.
\label{def:fujita}
\end{definition}

The main goal of this section is to propose a heuristic for counting points of bounded height on stacks.  If $\cX$ is a stack over $C$, $\cU$ is an open dense substack of $\cX$, and $\cV$ is a Northcott vector bundle (as in Definition~\ref{def:northcott}) on $\cX$, define a counting function
\beq
N_{\cU,\cV,K}(B) = |\set{x \in \cU(K): \height_{\cV}(x) \leq \log B}|.
\eeq

The Batyrev--Manin conjecture is customarily stated for {\em Fano} varieties, those with ample anticanonical bundle.  As mentioned above, it is not clear what the right analogue of this condition is for stacks.  For instance, we certainly do not want to exclude stacks like $BG$, on which all vector bundles have degree $0$ and are thus in some sense not ``strictly positive," but we {\em do} want to exclude stacks like abelian varieties, whose anticanonical bundle is trivial.  To this end, we make the following defintion.  Let $\cX$ is a smooth proper Deligne--Mumford stack over a number field $K$, let $m>0$ and $B$ be real numbers, and let $d \geq 1$ an integer.  We then define $S(\cX,m,d,B)$ to be the set of pairs $(L,P)$ with $L$ a degree-$d$ extension and $P \in \cX(L)$, satisfying
\beq
\edd(P) + m\Delta_{L/K} < B.
\eeq
We provisionally say $\cX$ is {\em Fanoish} if $S(\cX,m,d,B)$ is finite for all $m,d$, and $B$. 
\\


We are now ready to state the heuristic that motivates this part of the paper.

\begin{conjecture}
\label{conj:weak-batman}
Let $K$ be a number field and let $C = \Spec \O_K$.  Let $\cX$ be a stack over $C$ whose generic fiber $\cX_K$ is a smooth proper Deligne--Mumford stack over $K$. Suppose further that $\cX_K$ is Fanoish and that $\cX(K)$ is Zariski dense in $\cX_K$.  If $\cV$ is a semipositive vector bundle on $\cX$, then there exists an open dense substack $\cU$ of $\cX$ such that, for every $\epsilon > 0$, there is a nonzero constant $c_\epsilon$ such that
\beq
c_\epsilon^{-1} B^{a(\cV)} \leq N_{\cU,\cV,K}(B) \leq c_\epsilon B^{a(\cV) + \epsilon}
\label{eq:batmaneq}
\eeq
where $a(\cV)$ is the Fujita invariant defined in Definition~\ref{def:fujita}.
\end{conjecture}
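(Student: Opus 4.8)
The plan has two parts: first, to check that the conjecture is \emph{consistent} with the two theorems it is designed to generalize; second, to give the function-field heuristic that makes the exponent $a(\cV)$ plausible in general. For the consistency check, take $\cX = X$ a smooth projective variety and $\cV = \cL$ an ample line bundle. By Remark~\ref{rmk:edd-special-cases} the universal tuning stack of any $x \in X(K)$ is a curve and $\edd(x) = \height_{-K_X}(x)$, so $D_{a,\cL}(x) = a\,\height_{\cL}(x) - \height_{-K_X}(x) = \height_{a\cL + K_X}(x) + O_{\cX(K)}(1)$ is the height attached to the $\mathbf{Q}$-line bundle $a\cL + K_X$; this is generically bounded below exactly when $a\cL+K_X$ is pseudo-effective, so $a(\cL)$ is the classical $a$-invariant and \eqref{eq:batmaneq} is precisely the weak Batyrev--Manin prediction on a suitable dense open $\cU = X \setminus \cZ$. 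When $\cX = BG$ and $\cV$ is a faithful representation, Remark~\ref{rmk:edd-special-cases} gives $\edd(x) = \rDisc(x)$ while $\height_{\cV}(x) = \sum_v \delta_{\cV;v}(x)$ is the discriminant-type quantity computed in \S\ref{subsec:BG}; matching $a(\cV)$ against the Malle exponent and $\rDisc$ against the power of $\log$ recovers the weak form of Malle's conjecture. Thus \eqref{eq:batmaneq} is at least not known to be false where its two progenitors are known.

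For the general heuristic I would take $K = k(t)$, $C = \PP^1$, and count maps in place of points. A point $x \in \cU(K)$ with $\height_{\cV}(x) \le \log B$ extends to a representable $\overline{x}\colon \cC \to \cX$ from its universal tuning stack, and $\cC$ is determined up to finite data by a \emph{type} $\tau$: the set of stacky points of $C$, their residue fields, and their inertia. For fixed $\tau$ such maps are parametrized by an open substack $M_\tau$ of a Hom-stack $\underline{\mathrm{Hom}}_C(\cC_\tau,\cX)$, whose expected dimension at $\overline{x}$ is $\chi(L^{\vee}_{\overline{x}}[1])$, which by the computation motivating Definition~\ref{def:edd} equals $\edd(x)$ up to a constant depending only on $\cX$ and $\tau$. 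If each $M_\tau$ has the expected dimension and its $\FF_q$-point count grows like $q^{\dim}$ in the relevant range, then $N_{\cU,\cV,k(t)}(B) \approx \sum_\tau \#\{\overline{x} : \height_{\cV}(\overline{x}) \le \log B\}$ behaves like a sum $\sum q^{\edd(x)}$ over the bounded locus. Organizing this as a geometric-type series in $\height_\cV$ and using that $a\,\height_{\cV}(x) - \edd(x)$ is bounded below precisely when $a \ge a(\cV)$ --- semipositivity of $\cV$ (Definition~\ref{def:gsp}) makes the relevant interval of $a$ nonempty and forces convergence for $a > a(\cV)$ --- one expects the sum to be dominated by the terms near the critical exponent, giving $B^{a(\cV)} \ll N_{\cU,\cV,K}(B) \ll B^{a(\cV)+\epsilon}$; the open substack $\cU$ is what remains after discarding the accumulating types $\tau$ and the subvarieties on which $\edd/\height_{\cV}$ is too large. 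The passage from $k(t)$ to a number field then proceeds by the usual analogy, with places of $\Spec\O_K$ playing the role of closed points of $\PP^1$.

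The main obstacle is that the statement is genuinely open: restricted to schemes it contains the weak Batyrev--Manin conjecture, and restricted to $BG$ it contains Malle's conjecture, neither of which is known in general, so no unconditional proof is available. Beyond that, turning the function-field heuristic into a theorem requires (i) showing the Hom-stacks $M_\tau$ have the expected dimension and, much harder, controlling their irreducible components and cohomology in the stable range so that $\FF_q$-point counts really are $\sim q^{\dim}$ --- a homological-stability-plus-Weil-conjectures input available only in favorable cases; (ii) proving the accumulating locus is a proper closed substack, i.e.\ that the exceptional types and subvarieties can be absorbed into $\cX \setminus \cU$; and (iii) handling wild ramification, where Proposition~\ref{pr:boundeddisc} fails and the discrepancies $\delta_{\cV;v}$ are unbounded, so the series manipulation must be carried out more carefully. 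A realistic intermediate target is to establish \eqref{eq:batmaneq} for families where the Hom-stack geometry is understood --- weighted projective stacks, $BG$ for abelian $G$, and symmetric powers of $\PP^1$ --- using the explicit height formulas of \S\ref{sec:examples}.
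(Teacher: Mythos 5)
This statement is a conjecture, not a theorem, so the paper offers no proof; it only states the conjecture, motivates $\edd$ via deformation theory in \S\ref{subsec:edd}, and verifies in \S 4.3--4.4 that the predicted exponent specializes correctly to the weak Batyrev--Manin and Malle conjectures. You correctly recognize this and your consistency checks and function-field heuristic match the paper's own motivation, so your proposal is essentially the same approach and is appropriate for what the statement actually is.
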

 
\begin{remark} Our point of view throughout has been to let $K$ be a global field of any characteristic, however in Conjecture \ref{conj:weak-batman} we restrict to the case where $K$ has characteristic $0$. The reason for this is that we aim to emulate the Batyrev--Manin conjecture, and the form that conjecture should take for global fields of characteristic $p$ is not fully settled.  Indeed, there are counterexamples to the most naive formulations of Batyrev--Manin, even for the anticanonical height; see Starr--Tian--Zong~\cite[Lemma 5.1]{starr2018weak} and recent work of Beheshti, Lehmann, Riedl, and Tanimoto~\cite{blrt:batmanhosed}. 
\end{remark}

\begin{remark} The condition that $\cX(K)$ is Zariski dense is present to handle cases where, for instance, $\cX(K)$ is empty or supported on a closed subvariety due to a local obstruction.
\end{remark}

\begin{remark}[Accumulating loci can be $0$-dimensional] One difference between this case and the traditional Batyrev--Manin conjecture is that the accumulating locus $\cX \backslash \cU$ can be $0$-dimensional; indeed, on a stacky $\P^1$, the stacky points are accumulating subvarieties.  An example of this phenomenon can be seen in the recent paper of Pizzo, Pomerance, and Voight~\cite{pizzopomerancevoight}, which counts points on the moduli stack $X_0(3)$ with respect to (in our language) the height arising from the Hodge bundle.  They find that the preponderance of points are those supported at the single (stacky) point over $j=0$, and compute a lower-order asymptotic for points on the complement $\cU$ of this point.
\end{remark}

\begin{remark} Conjecture \ref{conj:weak-batman} corresponds to the weak version of the Batyrev--Manin conjecture.  An analogue of the strong version would be an assertion that $N_{\cU,\cV,K}(B)$ is asymptotic to a constant multiple of $B^{a(\cV)} (\log B)^{b(\cV,K)}$ for some explicit constant $b(\cV,K)$.  Getting the power of $\log B$ correct  (not even to speak of the constant!) is very subtle even in the Batyrev--Manin setting where $\cX$ is a scheme; we will not attempt to pin it down here, but it seems a rich problem for further investigation.
\end{remark}

\begin{remark}  One could, in the same way, propose a heuristic for counting points on $\cX$ of bounded {\em stable} height.  Just above, one could define $D^{\st}_{a,\cV}(x)$ to be  $\height^{\st}_{\cV}(x) - \edd(x)$ and define the stable Fujita invariant to be the infimum of those $a$ such that $D^{\st}_{a,\cV}$ is generically bounded below.  This gives nothing new in the case where $\cX$ is a scheme (where stable height and height are the same) or where $\cX = BG$ (in which case stable height is $0$) but is of interest in other cases: see Section~\ref{s:sympowers} for an example.  In the same vein, and in some sense analogously to the central case of Batyrev--Manin where we count by anticanonical height, one could count the number of points $x$ of $\cX(K)$ with $\edd(x) < \log B$, even though $\edd$ is not always a height in the sense of this paper.  One could reasonably expect this count to be bounded between constant multiples of $B$ and $B^{1+\epsilon}$.  For example, when $\cX = BS_3$ and $K=\Q$, this would amount to counting cubic fields $L/\Q$ such that the product of the primes ramified in $L$ is at most $\log B$.  This counting problem will be addressed in forthcoming work of Shankar and Thorne, where it is shown that the count is on order $B \log B$. 
\label{rem:stablebatman}
\end{remark}

\subsection{The case where $\cX$ is a scheme:  the Batyrev--Manin conjecture}

Suppose $\cX$ is a scheme $X$.  Then, since $\height_{\cV} = \height_{\wedge^r \cV}$ for any rank $r$ vector bundle $\cV$ on $\cX$, we may assume $\cV$ is a line bundle $\cL$.  We have seen in Remark \ref{rmk:edd-special-cases} that $\edd(x)=\height_{-K_X}(x)$ for any $x \in X(\overline{K})$.  So if $X$ is Fano, it is Fanoish because the anticanonical height is an ample height and thus has the Northcott property.  It is not immediately obvious that a Fanoish scheme is Fano, but it is also not unreasonable to hope so. To begin, $-K_X$ is nef:~if there were a curve $C$ on $X$ with $-K_X |_C$ of negative degree, then for some $d$, there is a degree-$d$ map $C\to\P^1$ which provides many degree-$d$ algebraic points with more and more negative $-K_X$-height, not counteracted by $m \Delta_{L/K}$ if we make $m$ small enough. We also note that a variety with trivial canonical sheaf may be expected {\em not} to be Fanoish; a K3 surface, for instance, is expected (though not in general known) to have a Zariski-dense set of points over some extension $L$ of $K$, which implies that $X$ is non-Fanoish since all these point have $-K_X$-height $0$ and $\Delta_{L/K}$ fixed.

The question of which schemes ``should" satisfy the Batyrev--Manin conjecture is not wholly understood, but is probably not limited to Fano schemes alone; if it turns out that ``Fanoish" delineates a class of schemes including some to which Batyrev--Manin does not apply, we will narrow the notion.

The condition that $\cL$ is semipositive simply says that $\cL$ is nef; a nef height is bounded below, and if $\cL$ is not nef, there is a curve on which $\cL$ has negative degree, whose $\overline{K}$-points thus have heights which are not bounded below.

Now

\beq
D_{a,\cL}(x) = a \height_\cL(x) - \edd(x) = \height_{a\cL + K_X}(x)
\eeq
and $a(\cL)$ is the minimal $a$ such that $\height_{a\cL + K_X}(x)$ is generically bounded below.  

What does this say about the line bundle $a\cL + K_X$?  First of all, if $M$ is a big line bundle on $X$, then  the map $\phi_k\colon X \ra \P^{N_k}$ induced by the global sections of $\cL^k$ is a birational embedding for some sufficiently large $k$.   It is then immediate that the absolute height $\height_M(x)$ is bounded below on $X(\overline{K})$ away from the locus $Z$ contracted by $\phi_k$, and that there are only finitely many points of $X(K) \backslash Z(K)$ with height below any given bound.  So $h_M$ is generically bounded below.  On the other hand, the pseudoeffective cone is dual to the cone of {\em moving} curves by by a theorem of Boucksom, Demaily, Pa\u{u}n, and Peternell~\cite[Th 0.2]{bdpp} (see \cite[Th 2.22]{lehmannzariski} for the case of characteristic $p$).  So if $M$ is not pseudoeffective, there is a moving curve $Y$ on $X$ on which $M$ has negative degree; if $Z$ is any closed locus, we can move $Y$ to not be contained in $Z$, and then $Y(\overline{K})$ has points away from $Z$ of arbitrarily negative height; in particular, $h_M$ is not generically bounded below.

Since the pseudoeffective cone is the closure of the big cone, we conclude that the infimum of $a$ such that $\height_{a\cL + K_X}(x)$ is generically bounded below is the same as the infimal $a$ such that $a\cL + K_X$ is pseudoeffective, which is the same as the infimal $a$ such that $a\cL + K_X$ is big.  And this $a(\cL)$ is just the usual Fujita invariant appearing in the Batyrev--Manin conjecture for Fano varieties.  So Conjecture~\ref{conj:weak-batman} recovers the (weak form of the) Batyrev--Manin conjecture.

 \subsection{The case where $\cX$ is $BG$: Malle's conjecture}
\label{subsec:BG-Malle}
 
Now suppose $\cX = BG$ over a number field $K$, and $\cV$ is a vector bundle, i.e., a representation of $G$.   In particular, let us assume $\cV$ is a faithful permutation representation corresponding to an embedding $G < S_n$.  Each point $x$ of $BG$ corresponds to a $G$-extension of $K$ (possibly an \'etale algebra), and, via the embedding of $G$ into $S_n$, a degree-$n$ extension $L/K$ whose Galois closure is $G$. We have already computed that
\beq
\height_{\cV}(x) = (1/2) \log |\Delta_{L/K}| = \sum_{v \in R} e_v \log q_v
\eeq
where $R$ is the set of nonarchimedean places of $K$ ramified in $L/K$, and $e_v$ is the local degree of the discriminant.  If $v$ is a place where $L/K$ is tamely ramified, so that tame inertia acts on $\set{1,\ldots, n}$ through a cyclic subgroup $\langle \pi \rangle < S_n$, the ramification $e_v$ is just the {\em index} $\ind(\pi)$, the difference between $n$ and the number of orbits of $\pi$.

First of all, note that $\cV$ is semipositive, since $\height^{\st}_{\cV}$ is identically $0$. 


 It follows from Remark \ref{rmk:edd-special-cases} that for any extension $E/K$ and any point $x \in BG(E)$ corresponding to a degree-$n$ extension $F/E$, we have
\beq
\edd(x) = \sum_v \log q_v
\eeq
where the sum is over nonarchimedean places $v$ of $E$ which are ramified in $F/E$.  Note in particular that, because this is positive, $BG$ is Fanoish; the set of $(L,x \in BG(L))$ with $\edd(x) + m\Delta_{L/K} < B$ involves only the finite set of extensions $L/K$ with discriminant at most $B/m$, and for each $L$, the set of $x \in BG(L)$ with $\edd(x) < B$ is finite since it consists of $G$-extensions of $L$ with bounded discriminant. 

Thus, 
\beq
D_{a,\cV}(x) = a \height_{\cV}(x) - \edd(x) = \sum_v ((1/2)a e_v - 1) \log q_v.
\eeq

Suppose $a \geq 2 \max_{\pi \in G} \ind(\pi)^{-1}$.  Then $(1/2) a e_v -1 \geq 0$ for all tame primes $v$.  The contribution of non-tame primes is bounded below by a constant depending only on $[E:K]$.  Thus the Fujita invariant of $\cV$ is at most $2\max_{\pi \in G} \ind(\pi)^{-1}$.

Suppose, on the other hand, that $a$ is strictly smaller than $2 \ind(\pi)^{-1}$ for some $\pi \in G$.  If $E/K$ is an extension of $K$ and $L/E$ a $G$-extension such that every ramified prime is tame and has tame inertia acting via $\pi$, then the point $x$ has
\beq
\edd(x) = \sum_v ((1/2) a e_v - 1) \log q_v
\eeq
which is bounded above by a {\em negative} constant multiple of $\sum_v \log q_v$.  Heuristically, it seems safe to suppose one can choose such $(E,L)$ with $\sum \log q_v$ as large as one likes, which would mean that $D_{a,\cV}$ was not generically bounded below.  But this is perhaps not completely obvious: for instance, when $G = S_n$, one is saying that there are many field extensions with squarefree discriminant.  One certainly expects this to be the case, but the fact, for example, that there are arbitrarily large squarefree integers which are discriminants of degree-$n$ extensions of $\Q$ is a recent result of Kedlaya~\cite{Kedlaya2011}.  In fact, all we need is that for {\em some} extension $K'$ of $K$ there are extensions $L/K'$ with larger and larger discriminants whose ramification is entirely or almost entirely drawn from the minimal-index conjugacy class in $G$.  One can presumably construct such extensions using the method of regular extensions popular in work on the inverse Galois problem; using the Riemann existence theorem you write down a cover of curves $X \ra \P^1_{\overline{K}}$ with Galois group $G$ and all ramification drawn from the minimal-index conjugacy class, then descend the picture to $X_0 \ra  \P^1_{K'}$ for some finite extension $K'/K$, then show that specialization to points of $\P^1(K')$ yields many extensions of $K'$ with the desired properties.  Since we are just formulating conjectures here, we will not push this argument through in detail.

An argument of the sort sketched in the above paragraph is necessary due to the fact that we defined the Fujita invariant in terms of heights of points over extension fields of $K$; presumably, a more conceptual geometric definition of the Fujita invariant of a vector bundle with zero stable height would automatically assign $\cV$ the value $2 \max_{\pi \in G} \ind(\pi)^{-1}$.

At any rate, if we grant the heuristic argument on the Fujita invariant above, we find that Conjecture~\ref{conj:weak-batman} predicts that the number of degree-$n$ extensions $L/K$ with Galois group $G$ and discriminant at most $B$ -- in other words, the number of points $x$ on $BG(K)$ such that
\beq
\height_{\cV}(x) = (1/2) \log |\Delta_{L/K}| < (1/2) \log B
\eeq
is bounded between $c_\epsilon^{-1} B^a$ and $c_\epsilon B^{a+\epsilon}$, where $a = \max_{\pi \in G} \ind(\pi)^{-1}$.  This is exactly the weak Malle conjecture.

\begin{remark} When $\cV$ is a representation of $G$ which is not a permutation representation, one still has some conjugacy-invariant function $f$ from $G$ to $\R_{>0}$ and an expression
\beq
\height_{\cV}(x) = \sum_{v \in R} c_v \log q_v
\eeq
where, for every tamely ramified prime $v$, the coefficient $c_v$ is the value of $f$ at an element of $G$ generating the tame inertia group at $v$.  In this case, Conjecture~\ref{conj:weak-batman} asserts that the number of points $x \in BG(X)$ with $\height_{\cV}(x) < \log B$ should be on order $B^a$, where $a$ is the reciprocal of the minimal value taken by $f(v)$ on non-identity elements of $G$.  Heuristics of this kind are well-known folk generalizations of Malle (see e.g., \cite[\S 4.2]{ellenberg:countingff}) and have begun to be proved in some cases.  For instance, the striking work of Alt\"{u}g, Shankar, Varma, and Wilson~\cite{d4conductor} can be thought of as proving Conjecture~\ref{conj:weak-batman} in the case where $\cX = BD_4$ and $\cV$ corresponds to the $2$-dimensional action of $D_4$ by rigid motions of the square.  (What they prove is much more refined than what Conjecture~\ref{conj:weak-batman}; they not only compute the power of $B$, but the power of $\log B$, and even the constant!)

The recent work of Alberts~\cite{albertsh1count} on counting classes in $H^1(\Gal(\Q),A)$, where $A$ is an abelian group with Galois action, can perhaps also be thought of in this way. Here, $A$ corresponds to an \'etale but possibly nonconstant group scheme, so the stack $BA$ is {\em geometrically} the classifying stack of the finite abelian group underlying $A$.  In this case, the points of $BA(\Q)$ are just the classes in $H^1(\Gal(\Q),A)$.   The ``$\pi$-discriminant" of \cite[Lemma 1.4]{albertsh1count} is the height attached to the vector bundle on $BA$ descended from the regular representation of the finite group underlying $A$.
\end{remark}

\subsection{Symmetric powers of $\P^n$}

Let $\cX$ be the stack $\Sym^m \P^n = [(\P^n)^m / S_m]$ and let $K$ be a global field of characteristic $0$ or greater than $m$.  For $x$ a point of $\cX(K)$ we have 
\beq
\edd(x) = -\height_{T_{\cX}^\vee}(x) +  \rDisc(x). 
\eeq
Note that we can associate to $x$ a degree-$m$ extension $L_1$ of $K$ and a point $y$ of $\P^n(L_1)$.  

The cotangent bundle $T_{\cX}^\vee$, considered as an $S_m$-equivariant bundle on $(\P^n)^m$, is the direct sum of the $m$ pullbacks of the cotangent bundle from the $m$ projections $\P^n$, and the height associated to the cotangent bundle on $\P^n$ is just the usual height associated to its determinant $\OO(-n-1)$.  So we are in the situation of Section~\ref{s:sympowers}, and we have
\beq
\height_{T_{\cX}^\vee}(x) = \height_{\OO(-n-1)}(y) + (n/2)\log \Delta_{L_1/K}.
\eeq
Thus
\beq
\edd(x) = \height_{\OO(n+1)}(y) + \sum_{v \in R} (1-(n/2)e_v) \log q_v 
\eeq
where, as in \S\ref{subsec:BG-Malle}, $R$ is the set of tamely ramified places and $e_v$ is the power of $v$ in the discriminant of $L_i/K$; the contribution of the wildly ramified places, as in Section~\ref{subsec:BG-Malle}, is bounded by a constant (and if $x$ varies over $\cX(L)$ for some extension $E/K$, the wild contribution is bounded by a constant depending only on $[E:K]$).

We also have
\beq
\height_{T_{\cX}}(x)  = \height_{\OO(n+1)}(y) + (n/2)\log \Delta_{L_1/K} = \edd(x) + \sum_{v \in R} (ne_v-1) \log q_v. 
\eeq

In particular, $\height_{T_{\cX}}(x) - \edd(x)$ is always nonnegative, and $\height_{T_{\cX}}(x) = \edd(x)$ whenever $x$ is a point of $\cX$ in the image of the projection from $(\P^n)^m(K)$ to $\Sym^m \P^n(K)$.  This shows that the Fujita invariant $a(T_{\cX})$ is $1$.  Conjecture~\ref{conj:weak-batman} thus suggests that, away from some proper closed substack, the number of rational points on $\Sym^m \P^n(X)$ with tangential height at most $B$ is between $B^{1-\epsilon}$ and $B^{1+\epsilon}$.

There is a large existing literature about counting points on projective spaces of fixed algebraic degree and bounded height~\cite{schmidt93,gao,masservaaler1,masservaaler2,widmer,lerudulier,grizzardgunther,guignard}.  Most typically, the question being asked is:  how many points are there in $\P^n(\overline{K})$ which have absolute Weil height at most $B$ and which are defined over a field $L_1/\K$ of degree $m$?  As we have seen in \S~\ref{s:sympowers}, we can interpret this question as follows.  Let $\cV$ be the vector bundle on $\Sym^m \P^n$ obtained as in \S~\ref{s:sympowers} taking $V_0$ as $\O_{\P^n}(1)$.  If $y$ is a point of $\P^n(L_1)$ and $x$ the corresponding point of $\Sym^m \P^n$, we have
\beq
\height_{\OO(1)}^{\abs}(y) = m^{-1} \height^{\st}_{\cV}(x).
\eeq
So we are in the situation of Remark~\ref{rem:stablebatman}.  In order to compute the stable Fujita invariant of $\cV$ we need to study the function
\beq
D_{a,\cV}^{\st}(x) = a \height^{\st}_{\cV}(x) - \edd(x) = (a-n-1)\height_{\OO(1)}(y) - \sum_{v \in R} (1-(n/2)e_v) \log q_v.
\eeq

When $n \geq 2$, we note that the local term $\sum_{v \in R} (1-(n/2)e_v) \log q_v$ is always non-positive, and is $0$ when $L_1$ is $K^m$; in particular, the set of $x$ in $\Sym^m \P^n(K)$ with $\edd(x) = (a-n-1) \height^{\st}_{\cV}(x)$ is Zariski dense for every $K$.  Thus, $D^{\st}_{a,\cV}$ will be generically bounded below for any $a \geq n+1$, but is not generically bounded below for any smaller $a$.  So the stable Fujita invariant is $n+1$.  For each $y$ in $\P^n(\overline{K})$ with $[K(y):K] = m$, we write $x_y$ for the point of $\Sym^m \P^n$.  Then Conjecture~\ref{conj:weak-batman} suggests that for every $n \geq 2$ we should expect that, for some open dense $U \in \Sym^m \P^n$,
\beq
c_\epsilon^{-1}  B^{m(n+1)} <  \#\set{y \in \P^n(\overline{K}): [K(y):K] = m, , x_y \in U(K), \height^{\abs}(y) < B}  < c_\epsilon B^{m(n+1) +\epsilon}.
\label{eq:sympn}
\eeq

When $n=1$, the situation is more complicated.  We now have
\beq
D_{a,\cV}^{\st}(x) \geq (a-2)\height_{\OO(1)}(y) - \sum_{v \in R} (1/2) e_v \log q_v = (a-2)\height_{\OO(1)}(y) - (1/2) \log \Delta_{L_1/K}
\eeq
with equality when $L_1/K$ has squarefree discriminant.  In order to understand how large $a$ needs to be for $D_{a,\cV}^{\st}(x)$ to be generically bounded below, we need to know how large $\log \Delta_{L_1/K}$ can be relative to $\height_{\OO(1)}(y)$.  A point $y$ of $\P^1(L_1)$ has a minimal binary $m$-ic form $F = a_0 X^m + \cdots + a_m Y^m$, where the height of the point $(a_0: \ldots :a_m)$ in $\P^m(L)$ is on order $m \height(y)$, since each coefficient is a monomial of degree $m$ in the coordinates of $y$.  The discriminant of $L_1/K$ is at most the discriminant of $F$, with equality if $\disc F$ is squarefree.  The discriminant of $F$ is a product of $m(m-1)$ terms of the form $\alpha_i \beta_j - \alpha_j \beta_i$, where $(\alpha_i: \beta_i)$ and $(\alpha_j: \beta_j)$ are conjugates of $y$ in $\P^1(\overline{K})$.  So the log of  $\disc F$, considered as an element of $\OO_L$, is on order $2m(m-1) \height_L(y)$, and the log of $\disc F$ considered as an element of $\OO_K$ is thus $2(m-1) \height(y)$.  We conclude that
\beq
D_{a,\cV}^{\st}(x) \geq (a-2)\height_{\OO(1)}(y) - (m-1) \height_{\OO(1)}(y) = (a-m-1) \height_{\OO(1)}(y).
\eeq
So $D_{a,\cV}^{\st}$ is generically bounded below when $a \geq m+1$, and as long as there is a Zariski-dense set of choices of $y$ with $\disc F$ squarefree (perhaps this is obvious, but at any rate it follows from standard conjectures) $D_{a,\cV}$ is {\em not} generically bounded below for any smaller $a$.  So the stable Fujita invariant in this case is $m+1$ and Conjecture~\ref{conj:weak-batman} asserts that, for some open dense $U$,
\begin{equation}
c_\epsilon^{-1}  B^{m(m+1)} <  \#\set{y \in \P^1(\overline{K}): [K(y):K] = m, , x_y \in U(K), \height^{\abs}(y) < B}  < c_\epsilon B^{m(m+1) +\epsilon}.
\label{eq:symp1}
\end{equation}

In fact, \eqref{eq:symp1} follows from a theorem of Masser and Vaaler~\cite{masservaaler2}, who prove a much more refined asymptotic, with $U$ the whole of $\Sym^m \P^1$:
\beq
\#\set{y \in \P^1(\overline{K}): [K(y):K] = m, \height^{\abs}(y) < B} \sim A_{m,K} B^{m(m+1)}
\eeq
with an explicit constant $A_{m,K}$.  Of course to compute the constant in the case where $K$ is a number field, one has to be careful about the metrization on $\OO(1)$ in a way we are not attempting here.  Le Rudulier~\cite{lerudulier} generalized the Masser--Vaaler result to the case of an arbitrary metrized line bundle on $\P^1$.

When $n \geq 2$, the asymptotics for points of bounded height on projective $n$-space with algebraic degree $m$ is still the subject of active research.  If $n$ is large enough relative to $m$, the heuristic \eqref{eq:sympn} is known to be correct; indeed, one has
\beq
\#\set{y \in \P^n(\overline{K}): [K(y):K] = m, \height^{\abs}(y) < B} \sim A_{m,n,K} B^{m(n+1)}
\eeq
when $K$ is a number field and $n > (5/2)m + O(1)$, by a result of Widmer~\cite{widmer} and when $n > m+1$ with $m$ prime, by a result of Guignard~\cite{guignard}.   For the function field case, the result is proved by  Thunder and Widmer~\cite{thunderwidmer} when $n > 2m+4$ (and generalized from $\P^n$ to smooth projective toric varieties by Bourqui in \cite{bourqui:toric}).  Schmidt in \cite{schmidt95} showed that \eqref{eq:sympn} holds in case $K=\Q$, $m=2$ and $n=2$; indeed, in that case, the growth rate is $B^6 \log B$, showing that the $\epsilon$ in the exponent is sometimes necessary.  M\^{a}nz\u{a}\textcommabelow{t}eanu~\cite{manzateanu} extended Schmidt's result to function fields $K$ of odd characteristic.

On the other hand, Schmidt in \cite{schmidt93} gives a lower bound
\beq
\#\set{y \in \P^n(\overline{K}): [K(y):K] = m, \height^{\abs}(y) < B}  > A_{m,n,K} B^{m(m +1)}
\eeq
valid for all $n$ and all sufficiently large $B$.  When $m > n$, this is a larger exponent than that predicted in \eqref{eq:sympn}.  But this does not contradict Conjecture~\ref{conj:weak-batman}.  The source of Schmidt's lower bound is the simple observation that any choice of line in $\P^n$ yields an injection of $\Sym^m \P^1(K)$ into $\Sym^m \P^n(K)$, and the former already contains $B^{m(m+1)}$ points of height at most $B$.  But any such point lies on the proper closed substack $Z \subset \Sym^m \P^n(K)$ lying under the locus in $(\P^n)^m$ parametrizing ordered $m$-tuples of {\em collinear} points.  Thus, it remains possible that when some accumulating locus is removed, the asymptotic growth rate of the number of points is smaller.  And indeed, Guignard~\cite[Theorem 1.2.3]{guignard} shows exactly this in the case where $K$ is a number field, $m=3$, and $n=2$.  In this setting, Schmidt's lower bound shows that the number of cubic points on $\P^2$ with absolute height at most $B$ is at least $cB^{12}$.  Guignard shows that if you exclude those cubic points which lie on a $K$-rational line, the number of rational points that remain is bounded above by $c_\epsilon B^{9+\epsilon}$, precisely the exponent predicted by Conjecture~\ref{conj:weak-batman}.

We thus see that the present viewpoint is useful for understanding phenomena of accumulation in a uniform way.  The algebraic points witnessing Schmidt's lower bound are clearly ``non-generic" in some sense; but, considered as points of $\P^n(\overline{K})$, they are Zariski dense.  Considering these points instead as points on $\Sym^m \P^n$ shows that the accumulation is a phenomenon that can be repaired by stripping out a proper closed subvariety, exactly as in the Batyrev--Manin setting.  Of course, one does not need to invoke stacks to adopt this point of view -- for instance, see \S 33.2 of  Le Rudulier's thesis~\cite{lerudulier}, where a degree-$m$ algebraic point of $\P^2$ is thought of as a point on the coarse moduli scheme of $\Sym^m \P^2$ rather than the stack itself; since the two are birational, the observation that the collinear $m$-tuples lie on a subvariety on which rational points accumulate takes the same form for Le Rudulier as it does for us.

\subsection{Footballs and multifootballs}

Proposition~\ref{pr:eddcurve} shows that $\edd$ agrees with tangential height $\height_{T_{\cX}}$ when $\cX$ is a smooth proper $1$-dimensional stack over a number field $K$ which is birational to a curve.  In particular, Proposition~\ref{pr:eddcurve} applies when $\cX$ is a stacky curve birational to $\P^1$ which has $r$ stacky points isomorphic to $B(\mu_{m_1}), \ldots, B(\mu_{m_r})$.  For short we will call such a curve an $(m_1, \ldots, m_r)$-rooted $\P^1$. The football $\F(a,b)$ as in \S~\ref{ss:footballs} is then an $(a,b)$-rooted curve.

Let $\cX$ be an $(m_1, \ldots, m_r)$-rooted $\P^1$.  Now Conjecture~\ref{conj:weak-batman} predicts that, for some open dense $\cU$ in $\cX$, we have
\begin{equation}
c_\epsilon^{-1} B \leq N_{\cU,T_{\cX},K}(B) \leq c_\epsilon B^{1 + \epsilon}.
\label{eq:footballbatman}
\end{equation}

First of all, $U$ is obtained by removing a finite set of points from $\cX$, so we can interpret the above asymptotic as a heuristic for the number of points of $\cX$ of bounded height which are not supported on the stacky locus.

The coarse map $\cX \ra \P^1$ is a birational isomorphism, and so without serious ambiguity we can denote a point $x$ on $\cX(K)$ not contained in stacky locus by its image $(a:b)$ in $\P^1(K)$.  We will now compute tangential height explicitly.  The tangent sheaf $T_{\cX}$ is $2P + \sum_i (1/m_i - 1) P_i$, where $P_i$ is the $i$'th stacky point and $P$ is some other point on $\cX$; the degree of $T_{\cX}$ is thus $d = 2-r + \sum_i (1/m_i)$.  If $N$ is an multiple of every $m_i$, then $NT_{\cX}$ is linearly equivalent to $Nd$ copies of $P$; in other words, it is pulled back from $\OO(Nd)$ on the coarse space $\P^1$.  We thus have
\beq
\height^{\st}_{T_{\cX}}(x) = (1/N) \height^{\st}_{NT_{\cX}}(x) = (1/N) \height_{\OO(Nd)}(a:b) = d \height_{\OO(1)}(a:b).
\eeq

We note, in particular, that $T_{\cX}$ is not semipositive unless $d \geq 0$, so we assume this from now on.

For expositional simplicity, we now restrict to the case $K=\Q$.  So the stable height of $x$ is $d \log \max(|a|,|b|)$ where $a$ and $b$ are now taken to be coprime integers.  It remains to compute the local discrepancies.  The local discrepancy $\delta_v(a:b)$ can be computed as follows.  The tangent bundle $T_{\cX}$ has local degree $1/m_i \in \Q/\Z$ at $P_i$, so the degree of $\overline{x}^* T_{\cX}^\vee$ at the point of the tuning stack $\cC$ over a place $v$ is $-k/m_i$ where $k = \ord_v L_i(a:b)$.  Thus the local degree of the pushforward $\pi_* \overline{x}^* T_{\cX}$ on $C$ is $\lfloor -k/m_i \rfloor = - \lceil k/m_i \rceil$, and so the local discrepancy is given by
\beq
\delta_v = (\lceil k/m_i \rceil - k/m_i) \log q_v.
\eeq
Throw out the bounded contribution of any prime $v$ where two distinct $P_i$ intersect, and denote by $L_i$ the linear form whose zero is at $P_i$.  Then for each prime $p$, there is at most one $L_i(a,b)$ vanishing at $p$, and the local discrepancy is $(1/m_i)\log p^c$ where $c$ is the least integer such that the $p$-adic valuation of $p^c L_i(a,b)$ is a multiple of $m_i$.  

\begin{definition} For integers $m,N$, define $\Phi_m(N)$ to be the unique $m$-th power free integer such that $N \Phi_m(N)$ is an $m$th power.  Alternatively,
\beq
\Phi_m(N) = \prod_p p^{m \lceil \ord_p N / m \rceil - \ord_p N}.
\eeq
\end{definition}

When $m=2$, we have that $\Phi_2(N)$ is the {\em squarefree part} of $N$, denoted $\sqf(N)$.

Putting this all together, we find
\beq
\height_{T_{\cX}}(a:b) = \sum_i (1/m_i) \log \Phi_{m_i}(L_i(a,b)) + (2-r + \sum_i 1/m_i) \log \max(|a|,|b|).
\eeq

When $r$ is small, it is straightforward to see that \eqref{eq:footballbatman} is satisfied.  For example, consider a $\P^1$ rooted only at $0$ with a copy of $B\mu_3$ (that is, $r=1$ and $m_1 = 3$).   Then (taking $\cU$ to be the complement of the stacky locus) $N_{\cU,T_{\cX},K}(B)$ is the number of pairs of coprime $a,b$ such that
\beq
\Phi_3(a)^{1/3} \max(|a|,|b|)^{4/3} < B.
\eeq
We can write $a$ uniquely as $c^3 d_1 d_2^2$ where $d_1,d_2$ are coprime and squarefree, and clearly bounded above by a power of $B$. Then $\Phi_3(a) = d_1^2 d_2$ and we find that up to constants we are counting the positive $c,d_1,d_2,b$ such that
\beq
d_1^{2/3} d_2^{1/3} \max(c^4 d_1^{4/3} d_2^{8/3},b^{4/3}) = \max(c^4 d_1^2 d_2^3, b^{4/3} d_1^{2/3} d_2^{1/3}) < B.
\eeq
For a given choice of coprime $d_1, d_2$ we see that the number of choices for $c$ is $B^{1/4} d_1^{-1/2} d_2^{-3/4}$, while the number of choices for $b$ is $B^{3/4} d_1^{-1/2} d_2^{-1/4}$, so the number of choices for the pair $(c,b)$ is just $B d_1^{-1} d_2^{-1}$; summing this over all coprime pairs $d_1,d_2$ up to some power of $B$ gives an asymptotic for $N_{\cU,T_{\cX},K}(B)$ on order $B \log^2 B$, which agrees with the heuristic prediction~\eqref{eq:footballbatman}.

John Yin has shown (personal communication) that \eqref{eq:footballbatman} holds for a $(2,2)$-rooted $\P^1$; in fact, he addresses the more general case where the degree-2 stacky locus is irreducible over $\Q$ rather than being supported at two rational points, as in the cases discussed here.

Things get more difficult as $r$ grows.  Consider the case of a $(2,2,2)$-rooted $\P^1$ with the half-points located at $0,-1,$ and $\infty$.  Then
\beq
\height_{T_{\cX}}(a:b)  = (1/2) \log (\sqf(a) \sqf(b) \sqf(a+b) \max(|a|,|b|))
\eeq
so $N_{\cU,T_{\cX},K}(B)$ is the number of pairs of coprime $a,b$ such that
\beq
\sqf(a) \sqf(b) \sqf(a+b) \max(|a|,|b|) < B^2.
\eeq
This set contains all pairs of coprime integers in $[0,\sqrt{B}]$, so it has size at least $cB$, as predicted.  

In fact, in recent work, Pierre Le Boudec (in personal communication) and Nasserden--Xiao~\cite{nasserdenS:The-density-of-rational-points-P1-with-three-stacky-points} have independently shown that $N_{\cU,T_{\cX},K}(B)$ is bounded above and below by constant multiples of $B \log B^3$.  This seems a very interesting case to explore further; can one obtain an asmyptotic $N_{\cU,T_{\cX},K}(B) \sim c B \log^3 B$, and if so, what is the constant?  

We also note that some footballs are weighted projective lines; in recently announced work, Darda~\cite{darda2021rational-arxiv} proves counting results for weighted projective spaces.

\subsection{When $\edd$ is negative: a stacky Lang--Vojta conjecture}
Conjecture~\ref{conj:weak-batman} is meant to apply to those ``Fanoish" stacks $\cX$ where $\edd$ is positive in some appropriate sense.  In this section, we consider the opposite scenario:~where $\edd(x)$ is negative. When $X$ is a scheme, this is the situation where the canonical bundle $K_X$ is ample, so that $X$ is of general type; in this case, and assuming $K$ is a number field, Lang's conjecture suggests that $X(K)$ should be supported on a proper closed subvariety of $X$.  (When $K$ is a global field of characteristic $p$, the situation is more subtle -- the famous examples of Shioda show, for instance, that a variety can be of general type and also unirational!  We thus restrict to the number field case for the remainder of the discussion.) 

More precisely, conjectures of Vojta say that, for any $X$, any ample line bundle $L$, and any real $\delta > 0$, the set of rational points on $X(K)$ such that
\beq
\height_{-K_X}(x) + \delta\height_L(x) < 0
\eeq
should be supported on a proper closed subvariety.

This suggests that one might tentatively propose a ``Vojta conjecture for stacks" as follows:  let $\cX$ be a stack over a number field $K$, let $L$ be a line bundle on $\cX$ pulled back from an ample line bundle on the coarse space of $\cX$, and let $\delta > 0$ a real number. 

\begin{conjecture}
The set of rational points of $\cX(K)$ such that
\beq
\edd(x)  +  \delta \height_L(x) < 0
\eeq
is supported on a proper closed substack of $\cX$.
\label{stackyvojta}
\end{conjecture}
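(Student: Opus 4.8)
We do not know how to prove Conjecture~\ref{stackyvojta}, but we have in mind the following reduction to the classical Lang--Vojta conjecture, which in particular would settle it in the cases where the latter is known. The plan begins with an observation that fixes the shape of the argument: since $\rDisc(x)\ge 0$ by Definition~\ref{definition:reduced-discriminant}, Definition~\ref{def:edd} gives $\edd(x)\ge-\height_{\widetilde K}(x)$, so any $x$ with $\edd(x)+\delta\,\height_L(x)<0$ already satisfies $\delta\,\height_L(x)<\height_{\widetilde K}(x)$. Hence it suffices to prove that $\Sigma:=\{x\in\cX(K):\delta\,\height_L(x)<\height_{\widetilde K}(x)\}$ is not Zariski dense. (When $\cX$ is a scheme we have $\rDisc\equiv 0$ and $\widetilde K=K_\cX$, and this is verbatim Vojta's conjecture for $X$ with respect to the ample $L$; so no approach can be much cheaper than Lang--Vojta, and we only aim for a reduction.)

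Next I would descend $\Sigma$ to the coarse space $\phi\colon\cX\to X$. Since $L=\phi^*L_0$ with $L_0$ ample on $X$, the proposition on pullbacks of bundles from schemes (the one immediately preceding Remark~\ref{rmk:ht-under-pullback-of-V}) gives $\height_L(x)=\height^{\st}_L(x)=\height_{L_0}(\phi(x))$ exactly. To handle $\widetilde K$, I would first reduce to the case where $\cX_K$ has trivial generic inertia and is a root stack over $X_K$ along reduced divisors $D_1,\dots,D_r$ of orders $e_1,\dots,e_r$ --- in dimension one by \cite[Theorem~1 and Remark~4]{geraschenkoS:bottomUp}, and in general by fixing a root-stack model and generically canonical bundle as in Example~\ref{example:stacky-curve-model}. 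Then $\widetilde K\sim\phi^*(K_X+\Delta)$ as $\QQ$-line bundles, with $\Delta=\sum_i(1-e_i^{-1})D_i$, and running the local-discrepancy analysis of \S\ref{subsec:local-stacky-heights} as in the proof of Proposition~\ref{pr:eddcurve} (compare Equation~\ref{eq:deltarootedp1}) identifies $\height_{\widetilde K}(x)$, up to a bounded function and away from a proper closed $\cZ_0\subsetneq X$, with the $K_X$-height of $y:=\phi(x)$ plus a rounded boundary term
\[
\sum_i\sum_v\Bigl\lceil\tfrac{e_i-1}{e_i}\,m_{i,v}(y)\Bigr\rceil\log q_v,
\]
where $m_{i,v}(y)$ is the intersection multiplicity of $y$ with $D_i$ at $v$. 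This term lies between the truncated and the full counting function of $D:=\sum_iD_i$, so $\height_{\widetilde K}(x)\le\height_{K_X+D}(y)+O(1)$ off $\cZ_0$. Therefore $\phi(\Sigma)$ is contained, up to $\cZ_0$ and the locus where $\phi$ is not finite, in $\{\,y\in X(K):\delta\,\height_{L_0}(y)<\height_{K_X+D}(y)+O(1)\,\}$, and since $\phi$ is generically finite it is enough to show this subset of $X(K)$ is not Zariski dense.

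The main obstacle is precisely this last step: that subset fails to be Zariski dense exactly in the regime where the pair $(X,D)$ --- equivalently the orbifold $(X,\Delta)$ --- is of log- (resp.\ orbifold-) general type, and this non-density is the content of the Lang--Vojta conjecture in its $S$-integral-point/Campana formulation with truncated counting functions, which is open in general. Unconditionally one obtains: (i) when $\dim\cX_K=1$, the previous step turns the statement into finiteness of solutions of $S$-unit and generalized-Fermat-type equations, which holds whenever $\deg(K_X+\Delta)>0$ by Siegel's theorem together with Faltings's theorem (or Darmon--Granville) applied to the associated covering curve, so Conjecture~\ref{stackyvojta} holds for all stacky curves birational to their coarse space, in particular for $(m_1,\dots,m_r)$-rooted $\P^1$'s with $\sum_i m_i^{-1}<r-2$; (ii) when $X$, or an \'etale-locally uniformizing cover of $\cX_K$, is a subvariety of a semiabelian variety, one may invoke the Faltings--Vojta theorem; (iii) when $X$ itself is of general type, the statement follows from the known cases of the Bombieri--Lang conjecture for $X$. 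A secondary technical obstacle is the non-tame and gerby case, where the fake ramification formula must be read through the wild invariants of \cite[Proposition~7.1]{kobin:Artin-Schreier-Root-Stacks} and the local discrepancies are controlled by Proposition~\ref{pr:boundeddisc}; since those discrepancies remain non-negative and, outside a finite set of places, bounded, the reduction in the first two steps is unaffected and only the Lang--Vojta input changes.
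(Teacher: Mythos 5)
The statement is a conjecture, so there is no proof in the paper to compare against; what you propose is a reduction to the classical Lang--Vojta conjecture, and that reduction fails at its very first step.

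You observe that $\rDisc(x) \geq 0$ forces $\edd(x) \geq -\height_{\widetilde K}(x)$, so the set in the conjecture is contained in $\Sigma := \{x \in \cX(K) : \delta\,\height_L(x) < \height_{\widetilde K}(x)\}$, and you conclude it suffices to show $\Sigma$ is not Zariski dense. The containment is correct, but the target statement is false in exactly the regime the section is about. Since discrepancies are non-negative, $\height_{\widetilde K}(x) \geq \height^{\st}_{\widetilde K}(x) + O(1)$, and $\height^{\st}_{\widetilde K}$ is (for a stacky curve with positive $\deg\widetilde K$) a positive multiple of the ample height; so $\Sigma$ contains all but finitely many rational points as soon as $\delta < \deg\widetilde K$. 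Concretely, take the $(4,4,4)$-rooted $\P^1$ the paper discusses right after the conjecture, with $L=\OO(1)$. For a generic coprime pair $(a,b)$, each prime dividing $ab(a+b)$ contributes a stacky point of multiplicity $1$ to the tuning stack, so $\rDisc(x) \approx 3\log\max(|a|,|b|)$, while $\edd(x) \approx 2\log\max(|a|,|b|)$ (using $\Phi_4(N) = N^3$ for squarefree $N$ in the displayed formula). Hence
\[
\height_{\widetilde K}(x) = \rDisc(x) - \edd(x) \approx \log\max(|a|,|b|) = \height_L(x),
\]
and $\Sigma$ is co-finite whenever $\delta < 1$. The entire content of the conjecture is a cancellation between the two comparably sized quantities $\rDisc(x)$ and $\height_{\widetilde K}(x)$; discarding $\rDisc$ erases exactly that cancellation.

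The same gap resurfaces at the end of your descent to the coarse pair $(X,D)$, where after bounding $\height_{\widetilde K}(x) \leq \height_{K_X+D}(y)+O(1)$ you ask that $\{y \in X(K) : \delta\,\height_{L_0}(y) < \height_{K_X+D}(y)+O(1)\}$ be non-dense. When $X$ is a curve this set is, up to finitely many points, either all of $X(K)$ or empty depending on whether $\delta$ is less than or greater than $\deg(K_X+D)$; it is never a nontrivial proper subset. Lang--Vojta for a log pair is not a comparison between two ample heights: its substance is the \emph{truncated} counting function $N^{(1)}_D(y)$, which is precisely what $\rDisc(x)$ records after descent, and which you discarded at the start. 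Your list of unconditional inputs --- Nasserden--Xiao and Granville for stacky curves, Faltings--Vojta for semiabelian ambients, Bombieri--Lang for general-type coarse spaces --- and your final remark on wild places are all in the right place, and the paper itself leaves the precise comparison with Vojta's generalized abc conjecture as an open question. But to reach any of those inputs the $\rDisc$ term must travel through the descent intact and be matched against the truncated, not the full, counting function on the classical side; as written, the bridge you build passes through a set that is dense, so nothing on the far side can be applied.
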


For example, if $\cX$ is a $(4,4,4)$-rooted $\P^1$ with the $(1/4)$th-points at $0,\-1,\infty$, then we have
\beq
\edd(a:b) = \log \Phi_4(a)^{1/4} \Phi_4(b)^{1/4} \Phi_4(a+b)^{1/4} \max(|a|,|b|)^{-1/4}
\eeq
and the claim is then that the inequality
\beq
\Phi_4(a) \Phi_4(b) \Phi_4(a+b) < \max(|a|,|b|)^{1 - \delta}
\eeq
holds for only finitely many pairs of coprime integers $a,b$.

Another interesting case is that of a $(2,2,2,2,2)$-rooted $\P^1$ with the half-points at $0,1,2,3,4$.  In this case, Conjecture~\ref{stackyvojta} says there are only finitely many five-term arithmetic progressions $a_1, \ldots, a_5$ such that
\beq
\sqf(a_1 a_2 a_3 a_4 a_5) < \max(a_1, a_5)^{1 - \delta}.
\eeq

As Nasserden and Xiao explain in \cite[Theorem 1.4]{nasserdenS:The-density-of-rational-points-P1-with-three-stacky-points}, the assertion that Conjecture~\ref{stackyvojta} holds for all stacky curves is equivalent to the abc conjecture, with a key ingredient being a result of Granville~\cite{granville:abcsquarefree}; indeed, Granville's result shows immediately that the two examples above satisfy Conjecture~\ref{stackyvojta} conditional on abc.  What is the relation between Vojta's ``more general abc conjecture" from \cite{vojta:generalabc} applied to a divisor $D$ on a scheme $X$, and Conjecture~\ref{stackyvojta} for a stack obtained by rooting a scheme $X$ at $D$?\footnote{We are grateful to Aaron Levin for useful discussions concerning this connection.}  One may hope that individual cases of Conjecture~\ref{stackyvojta}, like those described above, might not be as far out of reach as abc and its generalizations.


We note that a conjecture akin to Conjecture~\ref{stackyvojta} also appears in the work of Abramovich and V\'arilly~\cite[Proposition 3.2]{abramovichVA:level-structures}; they show their conjecture follows from the Vojta conjecture for schemes, and derive from this a finiteness theorem, conditional on Vojta, for principally polarized abelian varieties with full $m$-level structure for large enough $m$.  Their conjecture is expressed in terms of a height on $\cX$ which, in the language of this paper, is $\height^{\st}_{-K_X}$.  And their conjecture, like Conjecture~\ref{stackyvojta}, can be expressed as an assertion that the set of points $x \in \cX(K)$ with
\beq
\height^{\st}_{-K_X}(x)  + \delta \height_L(x) + \sum_v \alpha_v(x) < 0
\eeq
is not Zariski dense, for some local nonnegative contributions $\alpha_v$ supported at the points where $x$ fails to extend to an integral point of $\cX$.  (In fact, their conjecture says more, making an assertion about all algebraic points of a fixed degree $r$.)  The conjecture of Abramovich and V\'arilly is compatible with Conjecture~\ref{stackyvojta} but is not identical to it.  One interesting case where they differ is that of $\cX = A / \pm 1$, with $A$ an abelian variety of dimension $g$ over a number field $K$.  Let $x$ be a point of $(A / \pm 1)(K)$, which is to say a quadratic extension $F/K$ and a point of $A(F)$ with trace zero in $A(K)$.  The stable height can be computed on the pullback to the \'etale cover $A$, where the canonical divisor on $\cX$ is zero, so the Abramovich--V\'arilly conjecture bounds the set of $x \in \cX(K)$ such that
\beq
\delta \height_L(x) + \sum_v \alpha_v(x) < 0.
\eeq
But the left-hand side is positive for all but finitely many $x$ by the ampleness of $L$, so this is easy.  On the other hand, Conjecture~\ref{stackyvojta} says more.  We have
\beq
\edd(x) = -\height_{T_{\cX}^\vee} + \rDisc(x).
\eeq
Near a stacky point $v$, the tuning stack looks like $[(\Spec\cO_{F,v})/\pm1]$ and $\Lambda$, as in \S\ref{subsec:local-stacky-heights}, is given by $\cO_{F,v}^{\oplus g}$ where the $\pm1$-action sends the $i$-th basis vector $e_i$ to $-e_i$; hence, if we let $\overline{\alpha}$ denote the quadratic conjugate of $\alpha\in F_v$, we see $\alpha e_i$ maps to $-\overline{\alpha}e_i$. It follows that if $v$ is not of characteristic $2$, then $\Lambda\cap L_v$ is the set of sums $\sum_i\alpha_ie_i$ with $\alpha_i$ of trace zero. An easy computation then shows the local discrepancy at $v$ is $(1/2) g \log q_v$.  We conclude that
\beq
\height_{T_{\cX}^\vee} = \height^{\st}_{T_{\cX}^\vee} + (g/2)\log \disc_{F/K} = (g/2)\log \disc_{F/K}.
\eeq
Furthermore (still setting aside the bounded contribution of $2$), the conductor $|\supp R_\pi|$ is just equal to $\log \disc_{F/K}$.  So Conjecture~\ref{stackyvojta} says that the set of $x$ with
\beq
(1 - g/2)\log \disc_{F/K} + \delta \height_L(x) < 0
\eeq
is supported on a closed subvariety, for any real $\delta > 0$.  When $g \geq 2$ this is vacuous, but when $g \geq 3$ it has content. By changing $\delta$ we can absorb the constant on the right-hand side, and say that the prediction is as follows:  for any abelian variety $A/K$ of dimension at least $3$, and any real $\delta > 0$, there is a closed subvariety $Z_\delta \subset A$ such that, for any trace-zero quadratic point $P \in A(\overline{K}) \backslash Z(\overline{K})$, the absolute logarithmic height of $P \in A(\overline{K})$ is at least $\delta^{-1} \log \disc_{F/K}$. 

This formulation may seem a bit cumbersome, but it is necessary.  Suppose, for example, that $A$ is the Jacobian of a hyperelliptic curve $X$ over $K$, and suppose $X$ has a rational Weierstrass point so it embeds into $A$ via an Abel--Jacobi map.  Then $X$ provides many quadratic points $P$ on $A$ whose heights are bounded above by $c \log \disc_{K(P)/K}$ for some real $c$.  So if $\delta < c^{-1}$, the exceptional set $Z_\delta$ needs to include $X$.  But if we take $\delta < (1/m)c^{-1}$, then every quadratic point on $A$ lying on the curve $[m]X$ satisfies $\log \disc_{K(P)/K} + \delta \height(x) < 0$, so we need to include not only $X$ but $[2]X,[3]X, \ldots, [m]X$ in the exceptional locus $Z_\delta$.  On the other hand, no matter what $\delta$ is, there should be many quadratic points in $A \backslash Z_\delta$, because (at least under modest assumption on $A$) the functional equation of quadratic twists of $A$ will vary in sign with the twist, which means there will be many quadratic twists $A_d$ of $A$ which under Birch--Swinnerton-Dyer have positive rank.  The heuristics here would suggest that the non-torsion points on such an $A_d$ have very large height relative to $d$.  Is this reasonable?

\subsection{Further questions}

There are many questions about the subject matter here which in the interests of length and time we have not addressed.  
\begin{itemize}
\item How does one compute $\edd(x)$ explicitly when $K$ is the function field of a curve in finite characteristic and $\cX$ is not tame?
\item Is Conjecture~\ref{conj:weak-batman} geometrically consistent in the sense of Lehmann, Sengupta, and Tanimoto \cite{geometricconsistency}?
\item How should one estimate the asymptotic growth of points on $\cX$ which are integral with respect to a divisor $D$?
\item As mentioned earlier in the paper, one might, rather than defining height in terms of the degree of $\pi_* \overline{x}^* \cV$, simply keep track of the vector bundle $\pi_* \overline{x}^* \cV$ itself.  When $K = \Q$ this metrized vector bundle is a lattice of the same rank as $\cV$.  When $\cX$ is a scheme, this point of view has been advanced by Peyre~\cite{peyre:beyondheights} as a more refined means of studying rational points on varieties.  When $\cX = BG$ and $\cV$ is a permutation representation of $G$, this lattice is related to the shape of the integer lattice in the $G$-extension $L/\Q$ corresponding to $x$; the variation of these lattices as one ranges over $G$-extensions of bounded discriminant  has been an object of much recent interest~\cite{bhargavaharron, harron:cubics, guillermoshapes}.  What can be said about intermediate cases, like $\Sym^m \P^n$?
\end{itemize}

\appendix
\section{Metrized Vector bundles on stacks over number fields}

\subsection{Linear Algebra}
\label{ss:linear-algebra-metrics}

An \defi{Hermitian pairing} on a complex vector space $V$ is a bilinear map $\left \langle \,, \right \rangle \colon V \to \mathbb{C}$ such that for all $v,w \in V$, $\left \langle w, v\right> = \overline{\left \langle v,  w\right\rangle}$ (whence $\left \langle v, v \right\rangle \in \mathbb{R}_{\geq 0}$). 
We define the associated \defi{Hermitian norm} $\left\lVert \cdot \right\rVert  \colon V \to \mathbb{R}$ via $\left\lVert v\right\rVert := \sqrt{\left \langle v, v \right\rangle}$. 
We call such a pair $\overline{V} := (V, \left\lVert \cdot \right\rVert_V)$ (or equivalently, $(V, \left \langle \,, \right \rangle_V)$) an \defi{Hermitian space}. For $r \in \mathbb{R}_{\geq 0}$ we define the \defi{ball of radius} $r$ to be $B\left(\overline{V},r\right) := \{v \in V \nmid \left\lVert v \right\rVert \leq r\}$ (and refer to $B\left(\overline{V},1\right)$ as the \defi{unit ball} in $\overline{V}$). 
We define the \defi{standard Hermitian space} to be $\overline{\mathbb{C}^n} := (\mathbb{C}^n, \left \langle \,, \right \rangle)$, where $\left \langle x,y \right \rangle := \sum x_i\overline{y_i}$.

 A \defi{morphism}  $\phi \in \hom\left(\overline{V} , \overline{W}\right)$ of Hermitian spaces is a linear map $\phi\colon V \to W$ such that $\left\lVert \phi(v) \right\rVert_W \leq \left\lVert v \right\rVert_V$ for all $v \in V$. The space $\hom(V,W)$ admits a pairing  
\[
\left \langle \phi, \psi \right \rangle := \sup_{v \in B\left(\overline{V},1\right)} \left \langle \phi(v), \psi(v) \right \rangle_{W}.
\] 
The associated norm is $\left\lVert \phi \right\rVert = \sup_{v \in B\left(\overline{V},1\right)} \left\lVert \phi (v) \right\rVert_{W}$; we let $\underline{\hom}\left(\overline{V},\overline{W}\right)$ be the associated Hermitian space, whence $\hom\left(\overline{V} , \overline{W}\right) := B\left(\underline{\hom}\left(\overline{V},\overline{W}\right), 1\right)$.
We define the \defi{dual} $\overline{V}^{\vee}$ of $\overline{V}$ to be $\underline{\hom}\left(\overline{V},\overline{\mathbb{C}}\right)$. 

Let $\overline{V}$ be an Hermitian space and let $0 \to V' \to V \xrightarrow{\pi} V'' \to 0$ be an exact sequence of complex vector spaces. Then the restriction of $\left\lVert \cdot \right\rVert_V$ to $V'$ is an Hermitian norm $\left\lVert \cdot \right\rVert_{V'}$ on $V'$. 
The orthogonal complement $\left(V'\right)^{\perp}$ of $V'$ is naturally identified with $V''$, inducing a pairing $\left \langle \,, \right \rangle_{V''}$ on $V''$ via restriction of $\left \langle \,, \right \rangle_V$ and this identification; the induced \defi{quotient norm} $\left\lVert \cdot \right\rVert_{V''}$ on $V''$ can thus be computed as $\left\lVert v \right\rVert_{V''} = \inf_{w \in \pi^{-1}(v)} \left\lVert w \right\rVert_{V} $.

Let $\overline{V}$ and $\overline{W}$ be Hermitian spaces. We define the \defi{direct sum} $\overline{V} \oplus \overline{W} := (V \oplus W, \left\lVert \cdot \right\rVert_{V \oplus W})$ via the declaration $\left \langle v, w \right\rangle_{V \oplus W} = 0$ for $v \in V, w \in W$; one then computes that $\left\lVert v \oplus w\right\rVert_{V \oplus W} = \sqrt{\left\lVert v \right\rVert_{V}^2 + \left\lVert w \right\rVert_{W}^2 }$. 
We define the \defi{tensor product} $\overline{V} \otimes \overline{W} := (V \otimes W, \left\lVert \cdot \right\rVert_{V \otimes W})$ via the formula $\left \langle v_1\otimes w_1, v_2 \otimes w_2 \right\rangle_{V \oplus W} = \left \langle v_1, v_2 \right\rangle_{V} \cdot \left \langle w_1, w_2 \right\rangle_{W}$; one then computes that $\left\lVert v \otimes w\right\rVert_{V \otimes W} = \left\lVert v \right\rVert_{V} \cdot \left\lVert w \right\rVert_{W} $. 
We define the \defi{alternating product} $\bigwedge^n\overline{V}$ via  $\left \langle v_1 \wedge \cdots \wedge v_n, w_1 \wedge \cdots \wedge w_n \right\rangle = \det\left(\left\langle v_i, w_j\right\rangle\right)$; this is not exactly equal to the quotient norm of $\left\lVert \cdot \right\rVert_{V^{\otimes n}}$ along the map $V^{\otimes n} \to\bigwedge^n{V}$, but rather is $\sqrt{n!}$ times the quotient norm.
%

\subsection{Analytic spaces}
\label{ss:analytic-spaces-metrics}

Let $X$ be a complex analytic space (as in \cite{GrauertR:coherent-analytic-sheaves}) and let $\mathcal{V}$ be a vector bundle on $X$. Let $\mathcal{C}_X$ denote the sheaf of continuous functions on $X$ valued in $\mathbb{R}_{\geq 0}$. An \defi{Hermitian norm} $|\cdot|$ on $\mathcal{V}$ is a morphism of sheaves
\[
|\cdot|\colon \mathcal{V}\to\mathcal{C}_X,
\]
such that
\begin{enumerate}
\item $|s|(x) = 0$ if and only if $s(x)=0$, 
\item \label{item:norm-multiplicative} for all $f\in\mathcal{O}_{X}(U)$, we have $|fs| = |f||s|$, and
\item for every complex point $x\colon \ast \to X$, the restriction of $|\cdot|$ to $x^*\mathcal{V}$ is Hermitian (when viewed as a norm on $H^0\left(\ast, x^*\mathcal{V}\right)$), 
\end{enumerate}
where, in condition (\ref{item:norm-multiplicative}), $|f|$ is the trivial norm on the line bundle $\mathcal{O}_X$ (i.e., $f\in\mathcal{O}_X(U)$ corresponds to a continuous function
$f\colon U\to \mathbb{C}$, and we define $|f|\colon U\to \RR_{\geq 0}$ by $|f|(x) = |f(x)|$).
We call such a pair $\overline{\mathcal{V}} := (V,|\cdot|)$ a \defi{metrized vector bundle} on the analytic space $X$.
\\

We define direct sums, tensor products, alternating products, and duals via the formulas from (\ref{ss:linear-algebra-metrics}) (locally, and if necessary, we sheafify);
for example, given metrized vector bundles $(\mathcal{V}_1,|\cdot|_{1})$ and $(\mathcal{V}_2,|\cdot|_{2})$, we define 
\[
|\cdot|\colon \mathcal{V}_1 \oplus \mathcal{V}_2\to\mathcal{C}_X,
\]
as 
\[
|v_1 \oplus v_2|(x) :=  \left( \left(|v_1|_{1}(x)\right)^2 + \left(|v_2|_{2}(x)\right)^2 \right)^{1/2}.
\]
Given a morphism $g \colon X \to Y$ of analytic spaces and a metrized vector bundle $\overline{\mathcal{V}} = (\mathcal{V}, |\cdot|)$ on $Y$, we define the \defi{pull back} $g^*\overline{\mathcal{V}}$ to be the pair $((g^*\mathcal{V}), g^*|\cdot|)$, where $g^*|\cdot|$ is adjoint to the composition
\[
 \mathcal{V}\to\mathcal{C}_Y \to g_*\mathcal{C}_X,
\]
and where the second map is given by composition of functions.
If $g$ is unramified and finite (in particular, $g_*\mathcal{V}$ is a vector bundle), we define the \defi{direct image} $g_*\overline{\mathcal{V}}$ to be the pair $((g_*\mathcal{V}), g_*|\cdot|)$, where $g_*|\cdot|$ is defined via the composition
\[
g_* \mathcal{V}\to g_*\mathcal{C}_X \to \mathcal{C}_Y,
\]
and where $g_*\mathcal{C}_X \to \mathcal{C}_Y$ is defined by summation on fibers; in other words, for an open subset $U \subset Y$ and a function $h \in \mathcal{C}_X(g^{-1}(U))$, we define a map $U \to \mathbb{R}_{\geq 0}$ via the formula $y \mapsto \sqrt{\sum_{x \in g^{-1}(y)} h(x)^2}$. 
For a complex point $x\colon \ast \to X$ with image $y\colon \ast \to Y$, the natural map $(g^*\mathcal{V})_{x} \to \mathcal{V}_{y}$ is an isomorphism, and the norm is ``the same'' on these fibers.
%
In contrast, the fiber $(g_*\mathcal{V})_{y}$ of the direct image is naturally isomorphic to $\oplus_{x \in g^{-1}(y)} \mathcal{V}_x$, and the norm on this fiber is the direct sum norm defined in (\ref{ss:linear-algebra-metrics}). 

\subsection{Schemes}
\label{ss:schemes-metrics}

By a \defi{variety over $S$} we mean a scheme of finite type over $S$. To a variety $X$ over $\Spec \mathbb{C}$ and vector bundle $\mathcal{V}$ on $X$, associate the complex analytification $(X^{\an},\mathcal{V}^{\an})$ (as in \cite{GrauertR:coherent-analytic-sheaves}). (We note that one can also associate an analytic space, functorially, to an algebraic space which is locally separated and locally of finite type over $\mathbb{C}$ \cite[Ch. I, 5.17]{Knutson:algebraicSpaces}, and that the setup here extends to that generality without any further modification.)

 Let $K$ be a number field, let $X$ be a $\spec \mathcal{O}_K$ variety, and let $\mathcal{V}$ on $X$ be a vector bundle on $X$. For an embedding $\sigma\colon K\to\mathbb{C}$ (i.e., a map $\sigma\colon \spec \mathbb{C} \to \spec K$), we let $X_\sigma:=X\times_{K,\sigma}\mathbb{C}$ and let $\mathcal{V}_\sigma$ denote the pullback of $\mathcal{V}$ to $X_\sigma$. We define a \defi{metrized vector bundle} on $X$ to be a vector bundle $\mathcal{V}$ together with a choice of Hermitian norm $|\cdot|_\sigma$ on $\mathcal{V}^{\an}_\sigma$ for every embedding $\sigma\colon K\to\mathbb{C}$, with the following property: for every Zariski open $U \subset X$ and section $s\in\mathcal{V}(U)$, we have $|\sigma^*s|_\sigma(p) = |{\bbar\sigma}^* s|_{\bbar \sigma}(\bbar p)$. 

We define direct sums, tensor products, alternating products, and duals via the formulas from (\ref{ss:analytic-spaces-metrics}). 
Given a morphism $g \colon X \to Y$ of $\Spec \mathcal{O}_K$ varieties and an embedding $\sigma \colon K \to \mathbb{C}$, the diagram 
\[
\xymatrix{
X_{\sigma} \ar[r]^{g_{\sigma}} \ar[d] &  Y_{\sigma} \ar[d] \\
X \ar[r]^g & Y
}
\]
commutes. Given a metrized vector bundle $\overline{\mathcal{V}} = (\mathcal{V}, |\cdot|)$ on $Y$, it follows that $\left(g^*\mathcal{V}\right)_{\sigma}$ is canonically isomorphic to $g_{\sigma}^*\left(\mathcal{V}_{\sigma}\right)$, and we define the \defi{pull back} $g^*\overline{\mathcal{V}}$ to have underlying vector bundle $g^*\mathcal{V}$ and metrics $g_{\sigma}^*|\cdot|_{\sigma}$ defined via  (\ref{ss:analytic-spaces-metrics}).
Similarly, if $g$ is finite, flat, and generically \'etale (and in particular locally free, so that $g_*\mathcal{V}$ is a vector bundle), we define the \defi{direct image} $g_*\overline{\mathcal{V}}$ to have underlying bundle $g_*\mathcal{V}$ and metrics $g_{\sigma,*}|\cdot|_{\sigma}$ defined via  (\ref{ss:analytic-spaces-metrics}).

There is an alternative type of direct image, which highlights the choice of base in our definition. Let $K \subset L$ be an inclusion of number fields. Let $X \to \Spec \mathcal{O}_L$ be an $\mathcal{O}_L$ variety and let $\overline{\mathcal{V}}$ be a metrized vector bundle on $X$. We define the \defi{restriction of scalars} of $(X, \overline{\mathcal{V}})$ to be the pair $(\Res_{L/K} X, \Res_{L/K}  \overline{\mathcal{V}})$, where $\Res_{L/K} X$ is the usual restriction of scalars (i.e., $X$ itself, viewed as an $\mathcal{O}_K$ variety via the composition $X \to \Spec \mathcal{O}_L \to \Spec \mathcal{O}_K$), and where $\Res_{L/K}  \overline{\mathcal{V}}$ has the same underlying vector bundle $\mathcal{V}$ and is endowed with a metric in the following way. 
Given an embedding $\sigma \colon K \hookrightarrow \mathbb{C}$, the space $\left(\Res_{L/K} X\right)_{\sigma}$ is isomorphic to $\coprod_{\sigma' \mid \sigma} X_{\sigma'}$, where the coproduct is taken over the set of $\sigma' \colon L \hookrightarrow \mathbb{C}$ extending $\sigma$; similarly, $\left(\Res_{L/K} \mathcal{V}\right)_{\sigma}$ is the vector bundle whose restriction to $X_{\sigma'}$ is $\mathcal{V}_{\sigma}$ (note that, by the sheaf axioms, $\Gamma(X_{\sigma}, \mathcal{V}_{\sigma} ) = \bigoplus_{\sigma' \mid \sigma} \Gamma(X_{\sigma'}, \mathcal{V}_{\sigma'})$), and the norm 
\[
|\cdot|_{\sigma}\colon \left(\Res_{L/K} \mathcal{V}\right)^{\an}_{\sigma} \to \mathcal{C}_{\left(\Res_{L/K} X\right)^{\an}_{\sigma}}
\]
 is the one whose restriction to $X_{\sigma'} \subset \left(\Res_{L/K} X\right)_{\sigma}$ is $|\cdot|_{\sigma'}$.

Similarly, if $K \hookrightarrow L$ is an extension of number fields, $X$ is an $\mathcal{O}_L$ variety, and $\overline{\mathcal{V}}$ is a metrized vector bundle on $X$ \emph{considered as an} $\mathcal{O}_K$ variety (equivalently, a metrized bundle on $\Res_{L/K} X$), we define \defi{base extension} $\overline{\mathcal{V}}_L$ as follows. The underlying bundle is $\mathcal{V}$; for a place $\sigma'$ of $L$ with restriction $\sigma := \sigma'|_{K}$, the map $\phi\colon X_{\sigma'} \to \Res X_{\sigma}$ of $\mathbb{C}$ varieties is an isomorphism, and we define $|\cdot|_{\sigma'}$ to be the same as $|\cdot|_{\sigma}$ (under the identification $\phi$). 

The \defi{degree} of a metrized line bundle $(\mathcal{V},|\cdot|)$ on $\spec \mathcal{O}_K$ (considered as an $\mathcal{O}_K$-variety) is defined to be
\begin{equation}
  \label{eq:metrized-degree}  
\deg(\mathcal{V},|\cdot|) = \log \left| \Gamma(\mathcal{V}) / \mathcal{O}_K \cdot s \right| - \sum_{\sigma\colon K\to\mathbb{C}} \log |\sigma^*s|_\sigma,
\end{equation}
where $s\in\Gamma(\mathcal{V})$ is any non-zero section. Implicit here is that this definition is independent of the choice of $s$. When $(\mathcal{V},|\cdot|)$ is a metrized vector bundle of rank $r > 1$, the degree of $(\mathcal{V},|\cdot|)$ is by definition the degree of the metrized vector bundle $\wedge^n (\mathcal{V},|\cdot|)$. If $K \hookrightarrow L$ is an extension of number fields and $(\mathcal{V},|\cdot|)$ is a metrized line bundle on $\spec \mathcal{O}_L$ \emph{considered as an} $\mathcal{O}_K$-variety, then we define $\deg(\mathcal{V},|\cdot|) := \deg(\mathcal{V}_L,|\cdot|)$, where $\mathcal{V}_L$ is the base extension of $\mathcal{V}$ to $K$.

If $K \subset L$ is a degree $n$ extension of number fields, then the following direct computation shows that 
\begin{equation}
\label{eq:arakelov-pullback-degree}
\deg(\mathcal{V}_L,|\cdot|) = n\cdot \deg(\mathcal{V},|\cdot|).
\end{equation}
Indeed, pullbacks commute with top wedge power, so it suffices to check the equality when $\mathcal{V}$ is a line bundle, in which case 
\[
\sum_{\sigma'\colon L\to\mathbb{C}} \log |(\sigma')^*s|_\sigma = 
\sum_{\sigma\colon K\to\mathbb{C}} \left(\sum_{\sigma' \mid \sigma} \log |\sigma^*s|_\sigma \right)= 
\sum_{\sigma\colon K\to\mathbb{C}} n \cdot \log |\sigma^*s|_\sigma
\]
and, since $\mathcal{O}_L$ is a flat $\mathcal{O}_K$-module, 
\[
 |(\Gamma(\mathcal{V}) \otimes_{\mathcal{O}_K}\mathcal{O}_L) / \mathcal{O}_L \cdot s| = 
|(\Gamma(\mathcal{V}) / \mathcal{O}_K \cdot s) \otimes_{\mathcal{O}_K}\mathcal{O}_L | = 
n\cdot  |\Gamma(\mathcal{V}) / \mathcal{O}_K \cdot s |.
\]
%


\subsection{Stacks}
\label{ss:stackybundles}

This generalizes to stacks in the following fairly formal way. 
\\

  Let $\mathcal{X}$ be an algebraic stack, finite type over $\Spec \mathcal{O}_K$. We define a \defi{metrized vector bundle} $\overline{\mathcal{V}}$ on $\mathcal{X}$ to be a vector bundle $\mathcal{V}$ on $\mathcal{X}$ together with, for every map $f \colon X \to \mathcal{X}$ from a variety $X$,  a choice of metric on $f^*\mathcal{V}$ (in the sense of \ref{ss:schemes-metrics}) which we denote by $f^*| \cdot |$, and which is compatible with compositions in the following sense: for a map $g\colon X' \to X$ from an $\mathcal{O}_K$-variety $X'$, there is a canonical isomorphism $g^* \left(f^*\mathcal{V}\right)  \to (f \circ g)^*\mathcal{V}$, and we require that this isomorphism identifies $g^*\left( f^*| \cdot | \right)$ with $(f \circ g)^*| \cdot |$.

  We again define direct sums, tensor products, alternating products, and duals via the formulas from (\ref{ss:linear-algebra-metrics}).  
Given a morphism $g \colon \mathcal{X} \to \mathcal{Y}$ and a metrized vector bundle $\overline{\mathcal{V}}$ on $\mathcal{Y}$, we define the \defi{pull back} $g^*\overline{\mathcal{V}}$ to have underlying bundle $g^*\mathcal{V}$ and, for a map $f \colon X \to \mathcal{X}$ from an $\mathcal{O}_K$-variety $X$, define $f^*(g^*\overline{\mathcal{V}}) := (g \circ f)^*\overline{\mathcal{V}}$. 
For direct images we restrict to the following special cases.
Let $\overline{\mathcal{V}} = (\mathcal{V}, |\cdot|)$ be a metrized vector bundle on $\mathcal{X}$. If $g$ is finite, flat, and generically \'etale (and in particular representable), we define the \defi{direct image} $g_*\overline{\mathcal{V}}$ to be the metrized vector bundle on $\mathcal{Y}$ which, for a map $f \colon Y \to \mathcal{Y}$ from a variety $Y$ with corresponding fiber product
\[
\xymatrix{
X \ar[r]^{f'} \ar[d]_{g'} & \mathcal{X} \ar[d]^g \\
Y \ar[r]^{f}          & \mathcal{Y}
}
\]
pulls back to $f^*\left(g_*\overline{\mathcal{V}}\right)  := g'_*f^{'*}\overline{\mathcal{V}}$. If instead $g$ is proper, quasi-finite, and birational, and $\mathcal{Y}$ is isomorphic to $\Spec \mathcal{O}_K$, then $g$ is an isomorphism on a non-empty open subset $U \hookrightarrow \mathcal{X}$; we define $g_*\overline{\mathcal{V}}$ to have underlying bundle $g_*\mathcal{V}$ (which is a vector bundle by Proposition \ref{coro:vb-pushforward}) and the metric defined by $g^*|\cdot|$. 

\subsection{A detailed example}
\label{ss:BG-arakelov}

Let $K$ be a number field and let $X = \Spec \mathcal{O}_K$, considered as an $\mathcal{O}_K$ variety. We consider the trivial metrized vector bundle  $(\mathcal{O}_X,|\cdot|)$ (where the trivial norms are defined in Subsection \ref{ss:analytic-spaces-metrics}). Explicitly, for an embedding $\sigma \colon K \hookrightarrow \mathbb{C}$, the scheme $X_{\sigma}$ is simply $\Spec \mathbb{C}$, and the norm  
\[
|\cdot|_{\sigma}\colon \mathcal{O}^{\an}_{X,\sigma} \to \mathcal{C}_{X^{\an}_{\sigma}}
\]
is the complex absolute value $\mathbb{C} \to \mathbb{R}_{\geq 0}$. Given a section $s \in \mathcal{O}_K$, $|\sigma^*s|_{\sigma}$ is equal to the complex absolute value $|\sigma(s)|$. Taking $s = 1$, we compute that the degree
\[
\deg(\mathcal{V},|\cdot|) = \log \left| \mathcal{O}_K / \mathcal{O}_K \cdot 1 \right| - \sum_{\sigma\colon K\to\mathbb{C}} \log |\sigma^*1|_\sigma = 0 - \sum_{\sigma\colon K\to\mathbb{C}} 0
\]
is 0, as one would expect of a trivial bundle.

Next, let $K$ be a number field and again let $X = \Spec \mathcal{O}_K$, but now considered as a variety over $\mathbb{Z}$. We consider the ``trivial'' metrized vector bundle  $(\mathcal{O}_X,|\cdot|)$  (where the trivial norms are defined in Subsection \ref{ss:analytic-spaces-metrics}). This is the same as the pull back of the trivial bundle on $\Spec \mathbb{Z}$ along the map (of $\mathbb{Z}$ varieties) $\Spec \mathcal{O}_K \to \Spec \mathbb{Z}$.
Explicitly, there is only one embedding $\sigma \colon \mathbb{Q} \hookrightarrow \mathbb{C}$, and the scheme $X_{\sigma}$ is isomorphic to the disjoint union $\coprod_{\sigma' \mid \sigma} X_{\sigma'}$, where the coproduct is taken over the set of embeddings $\sigma' \colon K \hookrightarrow \mathbb{C}$ of $K$ and where $X_{\sigma'} = X\times_{K,\sigma'}\mathbb{C}$ (i.e., considered as an $\mathcal{O}_K$ scheme); $X_{\sigma}$ is thus a disjoint union of $[K:\mathbb{Q}]$ copies of $\spec \mathbb{C}$. The norm  
\[
|\cdot|_{\sigma}\colon \mathcal{O}^{\an}_{X,\sigma} \to \mathcal{C}_{X^{\an}_{\sigma}}
\]
is locally (on $X_{\sigma}$) again given by the complex absolute value. 
%
%
%
Label the embeddings $\sigma_1,\ldots,\sigma_n$ and let $s \in \mathcal{O}_K$. Then $\sigma^*s$ is equal to the tuple $(\sigma_1(s),\ldots,\sigma_n(s))$. Given our choice of base, it does not make sense to compute the degree. Note that this description is also the same as the restriction of scalars (as in Subsection \ref{ss:schemes-metrics}) of the trivial metrized bundle on $\Spec \mathcal{O}_K$ (as an $\mathcal{O}_K$ variety) from the previous paragraph. 

Now let $X = \Spec \mathcal{O}_K$ and $Y = \Spec  \mathbb{Z}$, and let $\pi\colon X \to Y$ be the structure map. Consider the direct image $\pi_* \overline{\mathcal{O}_X} = (\pi_*\mathcal{O}_X, \pi_*|\cdot|)$, where we consider $X$ as a variety over $\mathbb{Z}$ and where $|\cdot|$ is the trivial metric. Then $\pi_*\mathcal{O}_X \cong \widetilde{\mathcal{O}_K}$ and $\pi_*|\cdot|$ has the following description. Again, since our base is $\Spec \mathbb{Z}$, there is only one embedding $\sigma \colon \mathbb{Q} \hookrightarrow \mathbb{C}$; the scheme $Y_{\sigma}$ is isomorphic to a single copy of $\Spec \mathbb{C}$, and the norm  
\[
\left(\pi_*|\cdot|\right)_{\sigma}\colon \left(\pi_*\mathcal{O}_{X,\sigma}\right)^{\an} \to \mathcal{C}_{Y^{\an}_{\sigma}}
\]
is now a map of sheaves on a topological space which is a single point, and thus determined by the map of global sections
\[
\mathcal{O}_K \otimes_{\mathbb{Z}} \mathbb{C} \cong \prod_{\sigma' \mid \sigma} \mathbb{C} \to \mathbb{R}_{\geq 0}
\]
where the product is taken over the set of embeddings $\sigma' \colon K \hookrightarrow \mathbb{C}$ of $K$,
which we label as $\sigma_1,\ldots,\sigma_n$. The map $\prod_{\sigma' \mid \sigma} \mathbb{C} \to \mathbb{R}_{\geq 0}$ is given by 
\[
(z_1,\ldots, z_n) \mapsto \sqrt{\sum |z_i|^2}
\]
and the isomorphism $\mathcal{O}_K \otimes_{\mathbb{Z}} \mathbb{C} \cong \prod_{\sigma' \mid \sigma} \mathbb{C}$ is given by 
\[
\alpha \otimes 1 \mapsto (\sigma_1(\alpha),\ldots, \sigma_n(\alpha)).
\]
We now compute the degree of $\pi_* \overline{\mathcal{O}_X}$. Let $\overline{\mathcal{V}} := \bigwedge^n \pi_* \overline{\mathcal{O}_X}$  be the top wedge power of $\pi_* \overline{\mathcal{O}_X}$, and choose a $\mathbb{Z}$ basis $\alpha_1, \ldots, \alpha_n$ of $\mathcal{O}_K$. Then $\bigwedge^n \mathcal{O}_K$ is a free $\mathbb{Z}$ module of rank 1 generated by the section $s = \alpha_1 \wedge \cdots \wedge \alpha_n$. We then compute that the degree is
\[
\log \left| \Gamma(\mathcal{V}) / \mathbb{Z} \cdot s \right| - \log |\sigma^*s|_\sigma = 0 -\log |\sigma^*s|_\sigma.
\]
Next, we compute $\log |\sigma^*s|_\sigma$. The norm $\bigwedge^n\pi_*|\cdot|$ is given by the composition
\[
\left(\bigwedge^n \mathcal{O}_K\right)\otimes_{\mathbb{Z}} \mathbb{C} \cong
\bigwedge^n \left(\mathcal{O}_K\otimes_{\mathbb{Z}} \mathbb{C} \right)\cong
\bigwedge^n \prod_{\sigma' \mid \sigma} \mathbb{C} \cong 
\mathbb{C} \to 
\mathbb{R}_{\geq 0};
\]
following $s$ through these maps
\begin{equation*}
  \begin{split}
(\alpha_1 \wedge \cdots \wedge \alpha_n) \otimes 1  \mapsto & (\alpha_1 \otimes 1) \wedge \cdots \wedge (\alpha_n \otimes 1) \\
\mapsto & (\sigma_1(\alpha_1),\ldots,\sigma_n(\alpha_1)) \wedge \cdots \wedge  (\sigma_1(\alpha_n),\ldots,\sigma_n(\alpha_n)) \\
= & \det (\sigma_j(\alpha_i)) \cdot \left(1 \wedge \cdots \wedge 1 \right) \\
\mapsto &\left|\det (\sigma_j(\alpha_i))\right| = |\Delta_K|^{1/2} 
\end{split}
\end{equation*}
we conclude that $|\sigma^*s|_\sigma = |\Delta_K|^{1/2}$ and that the degree of $\pi_*\overline{\mathcal{O}_K}$ is $-\log |\Delta_K|^{1/2}$. 

Finally: let $C = \Spec \mathbb{Z}$ and let $BG = [C / G]$, with quotient map $p\colon C \to BG$. Let $\overline{\mathcal{V}} = \left(p_* \overline{\mathcal{O}_{C}}\right)^{\vee}$, where  $\overline{\mathcal{O}_{C}}$ is the trivial metrized line bundle on $C$. (We dualize to facilitate the following quick global computation.) Let $x\colon \Spec \mathbb{Q} \to BG$ be a rational point corresponding to an extension $\Q \subset K$, and assume for this example that $K$ is a number field (rather than just an \'etale algebra). We will now show that $\height_{\overline{\mathcal{V}}}(x) = \log |\Delta_K|^{1/2}$. Let $\mathcal{C} = [\Spec \mathcal{O}_K / G]$. Then $\mathcal{C}$ is a tuning stack for $x$, summarized by the following diagram.
\[
\xymatrix{
\spec K\ar[r]\ar[d] & \Spec \mathcal{O}_K\ar[r]^-{g}\ar[d]^{p'} 
& C\ar[d]^p\\
\spec \mathbb{Q}\ar[r]\ar[dr] & \cC\ar[r]^-{\overline{x}}\ar[d]^{\pi} & BG \ar[ld]\\
&\Spec \mathbb{Z}&
}
\]
By definition, $\left(\overline{x}^*\overline{\mathcal{V}}\right)^{\vee} = p'_* g^*\overline{\mathcal{O}_C}$. Moreover, the tuning sheaf $\pi_*p'_* g^*\overline{\mathcal{O}_C}$ is isomorphic to $(p'\circ \pi)_*g^*\overline{\mathcal{O}_C}$, and $g^*\overline{\mathcal{O}_C}$ is the trivial metrized line bundle on $\Spec \mathcal{O}_K$ (as a $\mathbb{Z}$ variety). The height is then, by definition, 
\[
\height_{\overline{V}}(x)  := -\deg \left((p'\circ \pi)_*g^*\overline{\mathcal{O}_C}\right);
\]
we conclude that $\height_{\overline{V}}(x) = \log |\Delta_K|^{1/2}$.

\section{One-dimensional Artin stacks with finite diagonal}
\label{ss:one-dimensional-artin}

In this appendix we discuss a few technical aspects of the types of stacks that appear as the tuning stack of a rational point (Definition \ref{def:tuning-stack}). 
\\

Fix a base scheme $S$. An Artin stack $\mathcal{C}$ (finite type over $S$) with finite diagonal admits a \emph{coarse space} map $\pi\colon \mathcal{C} \to C$ \cite[Corollary 1.3 (1)]{KeelM:Quotients}, which is (by definition) universal for maps to algebraic spaces and is a bijection on geometric points, and is moreover Stein (i.e., $\pi_*\mathcal{O}_{\mathcal{C}} \cong \mathcal{O}_C)$ and a universal homeomorphism \cite[Theorem 6.12]{rydh:quotients}. If $S = \Spec k$ for some field $k$, then we say that $\mathcal{C}$ is \defi{geometric}; if $S \to \Spec \mathbb{Z}$ is finite and flat, then we say that $\mathcal{C}$ is \defi{arithmetic}.

\begin{definition}
\label{def:stacky-curve-def}

 A \defi{stacky curve} is a normal, one-dimensional Artin stack $\mathcal{C}$ with finite diagonal such that the coarse space map $\pi\colon \mathcal{C} \to C$ is birational, and such that $C/S$ is a proper curve if $\cC$ is geometric and finite over $S$ if $\cC$ is arithmetic. 
\end{definition}

Normality of $C$ follows from normality of $\mathcal{C}$, so $C/k$ is a smooth proper curve in the geometric case and $C \cong \coprod \Spec \mathcal{O}_{K_i}$ for some number fields $K_i$ in the arithmetic case. This is somewhat more general than the notion of stacky curve from \cite[Chapter 5]{voightZB:canonical-ring-of-a-stacky-curve}. 
\\

Our beginning lemma was pointed out to us by Sid Mathur.

\begin{lemma}
\label{l:tuning-stack-regular}
Let $\cC$ be a stacky curve. Then $\cC$ is regular.
\end{lemma}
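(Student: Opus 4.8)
The statement is that a stacky curve $\cC$ (a normal one-dimensional Artin stack with finite diagonal, birational over its coarse space, with $C/S$ a proper curve or finite over $\Spec\ZZ$) is regular. The plan is to reduce to a purely local statement on a smooth presentation and then invoke the structure of normal one-dimensional local rings. The key point is that ``regular'' for an Artin stack should be checked on a smooth cover by a scheme: if $U\to\cC$ is a smooth surjection from a scheme $U$, then $\cC$ is regular if and only if $U$ is regular (smooth morphisms preserve and reflect regularity of the source relative to the target, and here the ``target'' direction is what we use). So first I would pass to such a $U$; since $\cC$ is normal, $U$ is normal, and $U$ is one-dimensional because $\cC$ is (smoothness preserves dimension of fibers, and the fibers of $U\to\cC$ are smooth over a field, hence regular).

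Next, the crux: a normal one-dimensional locally Noetherian scheme is regular. This is the classical fact that a one-dimensional normal Noetherian local ring is a DVR (Serre's criterion, or directly: normal plus dimension one forces the maximal ideal to be principal). So I would localize $U$ at a closed point, observe the local ring is Noetherian (everything in sight is finite type over $S$ with $S$ Noetherian — either $\Spec k$ or finite flat over $\Spec\ZZ$), one-dimensional, and normal, hence a DVR, hence regular. Since regularity can be checked at closed points (and the generic points are already regular, being localizations of a normal scheme at minimal primes, which are fields), $U$ is regular, and therefore $\cC$ is regular.

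The one subtlety worth flagging — and the step I expect to require the most care rather than being purely formal — is the definition of ``regular'' for an Artin stack and the precise statement that it is smooth-local on the source. One must make sure that the smooth cover $U\to\cC$ exists with $U$ Noetherian (this is part of the definition of an algebraic/Artin stack of finite type over a Noetherian base) and that $\dim U = \dim\cC$, which follows since a smooth surjection has fibers of locally constant dimension and we are free to shrink $U$; alternatively one can phrase the whole argument in terms of the diagonal being finite, which guarantees the stabilizers are finite group schemes and hence the relative dimension of $U\to\cC$ matches a chosen presentation. Once the bookkeeping on dimensions and Noetherianness is in place, the heart of the matter — normal plus one-dimensional equals regular — is immediate. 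I would also remark that this is exactly the reason stacky curves, unlike their higher-dimensional analogues, are automatically regular: there is no room for normal-but-singular behavior in dimension one.
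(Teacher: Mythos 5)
Your overall strategy — pass to a smooth cover $U\to\cC$, use normality of $U$ inherited from $\cC$, and conclude regularity — is the right shape and matches the paper's. But there is a genuine gap at the step where you assert $\dim U = \dim\cC = 1$ and then invoke ``normal one-dimensional Noetherian implies regular.'' That dimension claim is false in general, because $\cC$ is allowed to be an Artin stack with finite diagonal that is \emph{not} Deligne--Mumford. For instance, if $\cC$ has a stacky point with stabilizer $\mu_p$ in residue characteristic $p$, then no \'etale cover exists near that point, and any \emph{smooth} cover $U\to\cC$ must have strictly positive relative dimension there (a free $\mu_p$-action cannot exist on a reduced fiber in characteristic $p$, so a $0$-dimensional smooth cover is impossible). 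Thus $U$ is typically $2$-dimensional near the bad points, and your reduction breaks. Your hedge that ``we are free to shrink $U$'' or that ``the diagonal is finite, hence the relative dimension matches'' does not rescue this: finiteness of the diagonal controls the stabilizers but does not make the relative dimension of a smooth presentation zero, precisely in the non-DM case the paper is careful to include (e.g., $B\mu_n/\Spec\ZZ$).

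The paper's proof sidesteps this by \emph{not} claiming $U$ is $1$-dimensional. Instead, for each geometric point $y$ of $\cC$, it uses that the coarse space $C$ is $1$-dimensional and $\pi$ is a bijection on geometric points to produce a point $z\in U$ of codimension $\leq 1$ lying over $y$. Then normality of $U$ (Serre's $R_1$, i.e., regularity in codimension one) shows $z$ is a regular point of $U$, so some open $V\ni z$ is regular and $V\to\cC$ is a smooth chart around $y$. The key replacement for your ``normal + 1-dim $\Rightarrow$ regular'' is ``normal $\Rightarrow$ regular in codimension $\leq 1$,'' applied to the possibly higher-dimensional $U$ but only at a well-chosen low-codimension point. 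Your proof would become correct if you made this adjustment: drop the claim about $\dim U$, and instead argue pointwise via $R_1$ and the coarse space as the paper does.
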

\begin{proof}
Since $\cC$ is an Artin stack, it has a smooth cover $p\colon U\to\cC$. Let $y\in\cC(\Omega)$ be a geometric point. Then $\pi(y)$ is a geometric point of $C$. Since $C$ has dimension at most $1$, the point $\pi(y)$ has codimension at most $1$ in $C$. Therefore, there exists a point $z\in U(\Omega)$ with $\pi \circ p(z)=\pi(y)$ such that $z$ has codimension at most $1$ in $U$. Since $\pi$ is a coarse space map, $p(z)\simeq y$.

Since $\cC$ is normal, $U$ is as well, and so $z$ is a regular point of $U$. Therefore, there is an open neighborhood $V\subseteq U$ of $z$ such that $V$ is regular. Since the image of $p|_V\colon V\to\cC$ contains $p(z)\simeq y$, we have found a smooth cover of a neighborhood of $y\in\cC(\Omega)$ by a regular scheme.
\end{proof}

\begin{proposition}
\label{prop:tuning-finite-cover}
There exists a finite flat surjection $p\colon C'\to\cC$ with $C'$ regular and with irreducible connected components. The composition $\pi\circ p\colon C'\to C$ is finite and flat.
\end{proposition}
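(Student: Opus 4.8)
The plan is to produce the cover $C' \to \cC$ by a local-to-global argument: first build finite flat covers étale-locally on the coarse space $C$, then glue them together into a single finite flat surjection, and finally normalize to arrange regularity and connectedness of components. First I would use the fact (Lemma \ref{l:tuning-stack-regular}) that $\cC$ is regular, together with the structure theory for stacky curves: away from finitely many closed points of $C$ the map $\pi\colon \cC \to C$ is an isomorphism, and near each stacky point $\cC$ looks like a quotient $[\Spec A / \Gamma]$ for a finite group scheme $\Gamma$ acting on a regular one-dimensional $A$ (this is essentially the local structure of tame and wild one-dimensional Artin stacks with finite diagonal; for the tame case it is a root stack by Cadman, see Example \ref{examp:root-stack}, and for the non-tame case one still has such a presentation étale-locally by Olsson's local structure theorems). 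In either presentation, the regular scheme $\Spec A$ itself maps finite flat and surjectively onto the local piece of $\cC$.

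Next I would globalize. Cover $C$ by finitely many open subschemes $U_0, U_1, \ldots, U_r$ where $U_0$ is the locus over which $\pi$ is an isomorphism (so $C' \to \cC$ can be taken to be the identity there) and each $U_j$ for $j \geq 1$ is an affine neighborhood of exactly one stacky point, over which $\pi$ restricts to a map admitting a finite flat surjection $V_j \to \cC \times_C U_j$ from a regular scheme $V_j$ as above. Over the overlaps $U_0 \cap U_j$ the two candidate covers need not agree, so rather than glue directly I would take the fiber product over $\cC$ of all the $V_j$ (extended by $\cC$ itself over the non-stacky locus) — more precisely, I would pick for each stacky point $v$ a finite flat surjection $C'_v \to \cC$ defined on all of $\cC$ (e.g.\ by extending $V_v$ to a finite flat cover of $\cC$, which is possible because any finite flat $\O_{\cC}$-algebra supported near $v$ can be modified to be finite flat everywhere, or alternatively by invoking Proposition~\ref{prop:tuning-stacks-exist}-style relative normalization of an auxiliary scheme mapping to $\cC$) and then set $C'$ to be the normalization of the fiber product $C'_{v_1} \times_{\cC} \cdots \times_{\cC} C'_{v_k}$ over all stacky points $v_1, \ldots, v_k$. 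A cleaner alternative, which I would actually pursue: invoke the theorem that an Artin stack of finite type over a field (or over $\Z$) with finite diagonal admits a finite surjective map from a scheme — this is the theorem already cited in the excerpt (\cite[Theorem B]{rydh-noetherian-approximation}, and in the proper case \cite[Theorem B]{rydh-noetherian-approximation} gives finiteness) — apply it to $\cC$ to get a finite surjection $Y \to \cC$ from a scheme, then observe $Y$ is one-dimensional, take its normalization $\widetilde{Y}$ (which is regular since one-dimensional normal schemes over a Dedekind base are regular), pass to a dense open where $\widetilde Y \to \cC$ is flat and spread flatness out, and handle the remaining finitely many fibers by a further finite modification.

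The main obstacle is securing \emph{flatness} of the cover, not surjectivity or regularity: a finite surjection from a scheme onto $\cC$ exists essentially for free, and normalizing a one-dimensional scheme over our Dedekind/curve base $S$ automatically yields regularity, but flatness of a finite morphism $\widetilde Y \to \cC$ is not automatic even when both source and target are regular and one-dimensional unless the morphism is, say, finite between regular schemes of the same dimension — which is exactly the ``miracle flatness'' criterion. So the key step is: arrange that $\widetilde Y$ is regular (done by normalization), that $\cC$ is regular (Lemma~\ref{l:tuning-stack-regular}), that the map is finite and surjective, and that all fibers are of the expected dimension $0$; then finite + surjective + source Cohen--Macaulay + target regular + equidimensional fibers forces flatness by the local criterion of flatness / miracle flatness, which one can check on the smooth cover of $\cC$ by a regular scheme. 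Finally, after possibly replacing $C'$ by the disjoint union of the normalizations of its irreducible components (each of which is again finite flat over $\cC$ since flatness over a Dedekind-like base is detected componentwise and torsion-freeness suffices), we obtain $C'$ regular with irreducible connected components and $C' \to \cC$ finite flat surjective. Finiteness and flatness of the composite $\pi \circ p \colon C' \to C$ then follow since $\pi$ is finite (being a coarse space map of a stack with finite diagonal, hence quasi-finite and proper) and flat is not needed to be asserted beyond what composition of finite flat maps gives — here $\pi$ itself is finite, and $\pi\circ p$ is finite as a composite of finite maps; flatness of $\pi \circ p$ over $C$ again follows from miracle flatness since $C'$ is regular (hence CM), $C$ is regular, the map is finite with $0$-dimensional fibers. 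I expect the bookkeeping around ``spreading out flatness and fixing the bad fibers'' to be the only genuinely delicate point, and it can likely be finessed by instead quoting the existence of a finite flat cover directly from the literature on stacky curves.
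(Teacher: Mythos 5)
Your ``cleaner alternative'' paragraph is essentially the paper's proof: take a finite surjection from a scheme (the paper cites \cite[Theorem 2.7]{EdidinHKV}, you cite Rydh --- both are fine), normalize to get a regular one-dimensional scheme, and then deduce flatness of $p$ by miracle flatness, checking it after base change along a smooth cover $U \to \cC$, using Lemma~\ref{l:tuning-stack-regular} for regularity of $\cC$. That is exactly the paper's argument.

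Two pieces of your write-up are unnecessary noise and, as written, would distract a referee. First, the entire opening paragraph (local quotient presentations at stacky points, gluing charts, fiber products over all stacky points) is abandoned midway and should simply be cut. Second, the sentence about ``pass to a dense open where $\widetilde Y \to \cC$ is flat and spread flatness out, and handle the remaining finitely many fibers by a further finite modification'' is redundant given what you say immediately after: miracle flatness already gives flatness at every point at once, with no generic/special dichotomy, precisely because source and target are both regular of dimension one and the map is finite surjective. Cut that sentence too. One small efficiency the paper uses that you don't: rather than taking a disjoint union of all normalized components (and then having to justify that componentwise torsion-freeness over the stacky base gives flatness, which is not quite a one-liner since $\cC$ is not Dedekind), the paper first reduces to $\cC$ connected and then simply discards all but one irreducible component of the normalization, noting that surjectivity onto $\cC$ is preserved because $\pi$ is a bijection on geometric points; flatness is then reproved from scratch by miracle flatness. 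Either route works, but the paper's is cleaner since it never needs a componentwise-flatness lemma.
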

\begin{proof}
We may assume that $\cC$ is connected. Since $\cC$ has finite diagonal, we know from \cite[Theorem 2.7]{EdidinHKV} that there is a finite surjective map $p\colon C'\to \cC$ where $C'$ is a scheme. We can assume $C'$ is normal by replacing it with its normalization. Since $\pi$ is proper and quasi-finite, $q:=\pi\circ p$ is proper and quasi-finite, hence finite. 
Since $C$ is of dimension $1$, so is $C'$. As $C'$ is normal, it is regular. Since $q$ is surjective, we can replace $C'$ by one of its irreducible components which surjects onto $C$; note that this maintains surjectivity of $p$, as $\pi$ is a bijection on geometric points. Since $C$ and $C'$ are regular, $q$ is flat by \cite[Corollary 18.17]{Eisenbud:commutativeAlgebra}. Similarly, since $\cC$ is regular, letting $U\to\cC$ be any smooth cover by a scheme, we see the pullback $p_U\colon C'\times_\cC U\to U$ is a finite map between regular schemes. Again, \cite[Corollary 18.17]{Eisenbud:commutativeAlgebra} tells us that $p_U$ is flat and hence $p$ is flat.
\end{proof}

\begin{corollary}
\label{coro:vb-pushforward}
Let $\E$ be a vector bundle on $\cC$.  Then $\pi_*\E$ is a vector bundle. 
\end{corollary}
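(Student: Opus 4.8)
The plan is to reduce the statement to the elementary fact that a torsion-free coherent sheaf on a Dedekind scheme is locally free, by realizing $\pi_*\mathcal{E}$ as a subsheaf of a genuine vector bundle on $C$.

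First I would invoke Proposition~\ref{prop:tuning-finite-cover} to fix a finite flat surjection $p\colon C'\to\mathcal{C}$ for which $q:=\pi\circ p\colon C'\to C$ is also finite and flat. Since $p$ is finite and flat, $\mathcal{A}:=p_*\mathcal{O}_{C'}$ is a vector bundle on $\mathcal{C}$, and since $p$ is moreover surjective it is faithfully flat, so the unit map $\mathcal{O}_{\mathcal{C}}\to\mathcal{A}$ is injective. Tensoring this injection with the vector bundle $\mathcal{E}$ (which is flat over $\mathcal{O}_{\mathcal{C}}$) and applying the projection formula for the affine morphism $p$ yields an injection $\mathcal{E}\hookrightarrow\mathcal{E}\otimes\mathcal{A}\cong p_*p^*\mathcal{E}$. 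Now push forward along $\pi$: being a right adjoint, $\pi_*$ is left exact, so I obtain an injection $\pi_*\mathcal{E}\hookrightarrow\pi_*p_*p^*\mathcal{E}=q_*p^*\mathcal{E}$.

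It then remains to observe that $q_*p^*\mathcal{E}$ is a vector bundle on $C$: the pullback $p^*\mathcal{E}$ is a vector bundle on $C'$, and locally $q$ is given by a finite flat---hence finite free---ring extension $R\to S$, over which a finitely generated projective $S$-module is finitely generated projective as an $R$-module. Consequently $\pi_*\mathcal{E}$ is a quasi-coherent subsheaf of a coherent sheaf on the Noetherian scheme $C$, hence coherent, and it is torsion-free because it embeds into a vector bundle; since $C$ is normal and one-dimensional (Definition~\ref{def:stacky-curve-def}), i.e.\ a Dedekind scheme, torsion-freeness forces local freeness, and we are done. I do not expect a genuine obstacle here: the only points needing a moment's care are the injectivity of $\mathcal{O}_{\mathcal{C}}\to p_*\mathcal{O}_{C'}$ (faithful flatness of the finite surjection $p$) and the stability of vector bundles under the finite flat pushforward $q_*$, both routine. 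One could instead note that $\pi$ is an isomorphism over a dense open of $C$, so $\pi_*\mathcal{E}$ is generically free of rank $\rk\mathcal{E}$ and any defect of local freeness lives at finitely many closed points; but the argument above handles all points uniformly and avoids such a case analysis.
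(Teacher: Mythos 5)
Your proof is correct and takes essentially the same route as the paper's: produce the finite flat cover $p\colon C'\to\cC$ from Proposition~\ref{prop:tuning-finite-cover}, embed $\pi_*\E$ into the vector bundle $q_*p^*\E$ via the injection $\O_\cC\hookrightarrow p_*\O_{C'}$ tensored with $\E$ (projection formula), and then use that a torsion-free coherent sheaf on the Dedekind scheme $C$ is locally free. The only small variation is how you justify that $\O_\cC\to p_*\O_{C'}$ is injective—you invoke faithful flatness of the finite flat surjection $p$, whereas the paper checks injectivity after passing to a smooth cover $\Spec A\to\cC$ and uses that the finite surjection $\Spec B\to\Spec A$ is dominant with $A$ reduced (via regularity of $\cC$); both justifications are valid.
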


\begin{proof}
We can assume that $\mathcal{C}$ is connected. We claim that the canonical map $\O_\cC\to p_*\O_{C'}$ is injective. It suffices to check this after passing to a smooth cover $\spec A\to\cC$. We see $C'\times_\cC \spec A \to \spec A$ is finite, so the fiber product is of the form $\spec B$. The induced map $\spec B\to\spec A$ is surjective, hence dominant, and $\spec A$ is regular, hence reduced, so $A\to B$ is injective, proving our claim.

To finish the proof, tensor the injective map $\O_\cC\to p_*\O_{C'}$ by with $\E$. This yields an injection $\E\to\E\otimes p_*\O_{C'} \cong p_*p^*\E$ (where the isomorphism is the projection formula), and hence an injection $\pi_*\E\to q_*p^*\E$. Since $p^*\E$ is a vector bundle and $q$ is finite flat, we see $q_*p^*\E$ is a vector bundle, so $\pi_*\E$ is torsion-free and coherent. As $C$ is regular of dimension 1, this implies $\pi_*\E$ is a vector bundle.
\end{proof}

We now address generalities about of the degree of a line bundle  on an Artin stack. In the geometric case, if $\cC$ is Deligne--Mumford, then Vistoli \cite{vistoli:intersection-theory-on-algebraic-stacks} developed a more general theory of intersection theory (see also \cite[Chapter 5]{voightZB:canonical-ring-of-a-stacky-curve} for just the case of line bundles).  In general, degrees of $0$-cycles on stacks are not defined (see \cite{EdidinGS:artinCycle}), and in the Arakelov setting (as in \ref{ss:linear-algebra-metrics}) some additional attention is needed even in the Deligne--Mumford case. However, we have shown in Proposition \ref{prop:tuning-finite-cover} that every connected stacky curve $\cC$ admits a finite flat surjection $C'\to\cC$ with $C'$ regular and irreducible, and by \cite[Remark 2.8]{EdidinHKV} this is all that one needs to develop intersection theory in our setting.

 \begin{definition}
\label{defn:degree-length}
Let $\mathcal{L}$ be a line bundle (resp.~torsion sheaf) on $\mathcal{C}$ and let $p\colon C'\to\cC$ be a finite and flat surjection from a regular scheme $C'$. We define the \defi{degree} (resp.~\defi{length}) of $\mathcal{L}$ to be $\deg \mathcal{L}=\frac{1}{\deg(p)}\deg p^*\mathcal{L}$ (resp.~$\length \mathcal{L}=\frac{1}{\deg(p)}\length p^*\mathcal{L}$). 
\end{definition}

Again, we emphasize the fact that in the arithmetic setting $\mathcal{L}$ is an Hermitian line bundle and we mean the Arakelov degree. For a torsion sheaf, the Archimedean contributions are 0 so there is no distinction.

\begin{lemma}
\label{lemma:degree-line-bundle-well-definied} 
 The degree (resp.~length) of $\mathcal{L}$ is independent of the choice of $p$.
\end{lemma}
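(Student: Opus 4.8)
The statement is that the degree (resp.\ length) of $\mathcal{L}$ on a connected stacky curve $\cC$, as computed via $\frac{1}{\deg p}\deg p^*\mathcal{L}$ for a finite flat surjection $p\colon C'\to\cC$ from a regular irreducible scheme, is independent of the choice of $p$. The standard device is to compare any two such covers $p_1\colon C_1'\to\cC$ and $p_2\colon C_2'\to\cC$ through a common dominating cover. First I would form the fiber product $C_1'\times_\cC C_2'$, pick an irreducible component $Z$ that surjects onto both factors (this exists by the usual dimension count, since $\pi$—and hence $p_1,p_2$—is a bijection on geometric points, so each projection is surjective), and replace $Z$ by its normalization $C''$, which is regular since it is a normal curve. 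We then have finite flat surjections $q_i\colon C''\to C_i'$ with $p_1\circ q_1$ and $p_2\circ q_2$ both equal (up to the canonical $2$-isomorphism) to a finite flat surjection $p''\colon C''\to\cC$, and by multiplicativity of degrees of maps, $\deg p'' = \deg q_i\cdot\deg p_i$. So it suffices to prove the single comparison: if $q\colon C''\to C'$ is a finite flat surjection of regular irreducible curves and $p\colon C'\to\cC$ is finite flat surjective, then $\frac{1}{\deg(pq)}\deg (pq)^*\mathcal{L} = \frac{1}{\deg p}\deg p^*\mathcal{L}$, i.e.\ $\deg q^*(p^*\mathcal{L}) = \deg q\cdot\deg p^*\mathcal{L}$.

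The remaining step is therefore the classical fact that for a finite flat surjection $q\colon C''\to C'$ of regular integral one-dimensional schemes and a line bundle $\mathcal{M}$ on $C'$, $\deg_{C''} q^*\mathcal{M} = \deg(q)\cdot\deg_{C'}\mathcal{M}$; in the arithmetic (Arakelov) case this is exactly equation~\eqref{eq:arakelov-pullback-degree} of Subsection~\ref{ss:schemes-metrics} applied componentwise (here one must note that a finite flat map of regular arithmetic curves is, on connected components, induced by an extension of number fields, and the archimedean contributions transform by the same multiplicativity computed there). For the torsion-sheaf statement, the analogous input is that $\length_{C''} q^*\mathcal{T} = \deg(q)\cdot\length_{C'}\mathcal{T}$, which follows from flatness of $q$ (so $q^*$ is exact and a short exact sequence of torsion sheaves pulls back to one) together with the local computation $\length_{C''} q^*(\mathcal{O}_{C'}/\mathfrak{p}) = \deg(q)$, valid since $q$ is finite flat of constant degree. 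Invoking exactness of $p^*$ and $q^*$ (Proposition~\ref{proposition:degree-length-exactness} of the excerpt) lets one reduce the general torsion sheaf to skyscrapers.

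I should also record that on the side of stacks themselves there is nothing to check beyond the scheme-level facts above: the only place a choice of cover entered was in the definition, and the fiber-product argument shows any two choices dominate a common third, so transitivity of the comparisons finishes the argument. The main obstacle, such as it is, is purely bookkeeping in the arithmetic case: one must be careful that $C'$ and $C''$ may be disconnected (a priori $C''$ could have several components over $\cC$), so the cleanest route is to insist—as Proposition~\ref{prop:tuning-finite-cover} already allows—that all covers used are \emph{irreducible}, reducing every comparison to a single extension of number fields and a single application of \eqref{eq:arakelov-pullback-degree}. With that normalization in place the proof is a two-line consequence of multiplicativity of degrees under composition and pullback.
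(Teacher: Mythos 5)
Your proposal is correct and follows essentially the same route as the paper: form the fiber product $C_1'\times_\cC C_2'$, normalize a dominating irreducible component, and apply multiplicativity of degree and length under pullback along finite flat maps (the paper's proof is the one-line chain of equalities in~\eqref{eq:independence-of-degree}, with the details you spell out left implicit). Two small slips worth flagging: the skyscraper computation should read $\length_{C''}q^*(\mathcal{O}_{C'}/\mathfrak{p}) = \deg(q)\cdot\length_{C'}(\mathcal{O}_{C'}/\mathfrak{p})$, not $=\deg(q)$, and Proposition~\ref{proposition:degree-length-exactness} asserts additivity of degree/length along short exact sequences rather than exactness of $p^*$ or $q^*$ (the latter is supplied directly by flatness, as you also note).
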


\begin{proof}
    
Let $p_i\colon C_i\to\cC$ be two such covers, and let $C_3$ be the normalization of some irreducible component of $C_1 \times_{\mathcal{C}} C_2$ such that the maps $q_i\colon C_3 \to C_i$ are both surjective (and thus finite and flat). We then have
\begin{equation}
\label{eq:independence-of-degree}
   \frac{\deg p_1^*\mathcal{L}}{\deg p_1} 
=    \frac{\deg q_1^*p_1^*\mathcal{L}}{(\deg q_1) (\deg p_1)} 
=   \frac{\deg q_2^*p_2^*\mathcal{L}}{(\deg q_2) (\deg p_2) } 
= \frac{\deg p_2^*\mathcal{L}}{\deg p_2 }.
\end{equation}
The proof for length is identical.
\end{proof}

\begin{definition}
  Let $f\colon \mathcal{C}' \to \mathcal{C}$ be a quasi-finite map of stacky curves. We define the \defi{degree} of $f$ to be the degree of the induced map $C' \to C$ of coarse spaces. 
\end{definition}

\begin{lemma}
\label{lemma:functorality-of-degree}
  Let $f\colon \mathcal{C}' \to \mathcal{C}$ be a quasi-finite map of stacky curves and let $\mathcal{L}$ be a line bundle (resp.~torsion sheaf) on $\mathcal{C}$. Then  $\deg f^*\mathcal{L} = \deg f \cdot \deg \mathcal{L}$ (resp.~$\length f^*\mathcal{L} = \deg f \cdot \length \mathcal{L}$).
\end{lemma}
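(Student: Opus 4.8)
The statement to prove is Lemma~\ref{lemma:functorality-of-degree}: for a quasi-finite map $f\colon \mathcal{C}' \to \mathcal{C}$ of stacky curves and a line bundle (resp. torsion sheaf) $\mathcal{L}$ on $\mathcal{C}$, one has $\deg f^*\mathcal{L} = \deg f \cdot \deg \mathcal{L}$ (resp. the same with length).

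\medskip

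\textbf{The plan.} The strategy is to reduce the statement on stacks to the corresponding statement on schemes by passing to a common finite flat cover, using Definition~\ref{defn:degree-length} to translate degrees back and forth. First I would invoke Proposition~\ref{prop:tuning-finite-cover} to pick a finite flat surjection $p\colon C' \to \mathcal{C}$ from a regular scheme $C'$ with irreducible connected components; I will assume, as in the other proofs of this appendix, that $\mathcal{C}$ and $\mathcal{C}'$ are connected (the general case follows componentwise). By Definition~\ref{defn:degree-length}, $\deg\mathcal{L} = \tfrac{1}{\deg p}\deg p^*\mathcal{L}$. Then I would form the fiber product $C' \times_{\mathcal{C}} \mathcal{C}'$ and, exactly as in the proof of Lemma~\ref{lemma:degree-line-bundle-well-definied}, replace it by the normalization $D$ of one of its irreducible components dominating both factors; this gives a commutative square with finite flat maps $D \to \mathcal{C}'$ and $D \to C'$ (finite flatness because source and target are regular of dimension $1$, as in the proof of Proposition~\ref{prop:tuning-finite-cover}), and $D$ is a regular scheme, hence can be used to compute degrees on $\mathcal{C}'$.

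\medskip

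\textbf{Key steps in order.} (1) Fix $p\colon C' \to \mathcal{C}$ as above, and let $g\colon D \to \mathcal{C}'$ and $h\colon D \to C'$ denote the two projections from $D$; note $p \circ h = f \circ g$ as maps $D \to \mathcal{C}$. (2) Compute $\deg f^*\mathcal{L}$ via the cover $g$: $\deg f^*\mathcal{L} = \tfrac{1}{\deg g}\deg g^*f^*\mathcal{L} = \tfrac{1}{\deg g}\deg h^*p^*\mathcal{L}$. (3) Since $h\colon D \to C'$ is a finite flat surjection of regular integral curves (or Dedekind schemes), the statement is now the classical one for schemes: $\deg h^*(p^*\mathcal{L}) = \deg h \cdot \deg p^*\mathcal{L}$ — in the geometric case this is the standard multiplicativity of degree under finite flat pullback, and in the arithmetic case it is Equation~\eqref{eq:arakelov-pullback-degree} from Appendix~\ref{ss:schemes-metrics}. (4) Assemble: $\deg f^*\mathcal{L} = \tfrac{\deg h}{\deg g}\deg p^*\mathcal{L}$. (5) Identify the scalar: degrees of maps are defined via coarse spaces (Definition just above the lemma), and degree is multiplicative for composites of finite maps of schemes, so $\deg(p\circ h) = \deg p \cdot \deg h$ and $\deg(f\circ g) = \deg f \cdot \deg g$; since $p\circ h = f\circ g$ we get $\deg h / \deg g = \deg f / \deg p$. (6) Substitute into step (4): $\deg f^*\mathcal{L} = \tfrac{\deg f}{\deg p}\deg p^*\mathcal{L} = \deg f \cdot \deg\mathcal{L}$, using Definition~\ref{defn:degree-length} once more. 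The torsion-sheaf case is word-for-word identical, replacing $\deg$ by $\length$ throughout and noting (as remarked after Definition~\ref{defn:degree-length}) that archimedean contributions vanish so there is no Arakelov subtlety.

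\medskip

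\textbf{Main obstacle.} None of the steps is genuinely hard; the only points requiring a little care are the existence and finite-flatness of the normalized component $D$ (which is handled exactly as in Lemma~\ref{lemma:degree-line-bundle-well-definied} and Proposition~\ref{prop:tuning-finite-cover}, via \cite[Corollary 18.17]{Eisenbud:commutativeAlgebra}), and making sure the bookkeeping of the various ``$\deg$'' symbols — degree of a map of stacky curves versus degree of a line bundle — stays consistent, in particular that $\deg f$ really is computed on coarse spaces and that $\pi\colon \mathcal{C} \to C$ being a bijection on geometric points keeps surjectivity intact when we pass to an irreducible component. So the ``hard part'' is essentially just bookkeeping; the mathematical content is the classical scheme-level multiplicativity of degree (and \eqref{eq:arakelov-pullback-degree} in the Arakelov case), which we are free to cite.
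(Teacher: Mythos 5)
Your proof is correct in spirit and uses the same underlying scheme-level fact (multiplicativity of degree under finite flat pullback, i.e.\ Equation~\eqref{eq:arakelov-pullback-degree}), but it takes a more roundabout route than the paper and has one technical soft spot. The paper's proof does not form a common cover of $\mathcal{C}$ and $\mathcal{C}'$. Instead it takes a single finite flat cover $p\colon C'\to\mathcal{C}'$ by a regular scheme (Proposition~\ref{prop:tuning-finite-cover}), observes that $f$ is proper (citing the Stacks Project), hence $f\circ p\colon C'\to\mathcal{C}$ is finite flat and can itself serve as the cover used in Definition~\ref{defn:degree-length} to compute $\deg\mathcal{L}$, and then writes $\deg f^*\mathcal{L} = \frac{\deg p^*f^*\mathcal{L}}{\deg p} = \deg f\cdot\frac{\deg(f\circ p)^*\mathcal{L}}{\deg(f\circ p)} = \deg f\cdot\deg\mathcal{L}$. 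This two-line computation replaces your five-step bookkeeping and requires no fiber product, no choice of component, and no normalization.

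The soft spot in your version: you form $C'\times_{\mathcal{C}}\mathcal{C}'$ and claim its normalized component $D$ is a regular \emph{scheme}. But $\mathcal{C}'$ is a stack, and unless $f$ is representable, $C'\times_{\mathcal{C}}\mathcal{C}'$ is in general a stack (its inertia is $\ker(I_{\mathcal{C}'}\to f^*I_{\mathcal{C}})$, which need not vanish), so $D$ need not be a scheme and ``$D$ is a regular scheme, hence can be used to compute degrees on $\mathcal{C}'$'' does not directly follow. This is unlike the setting of Lemma~\ref{lemma:degree-line-bundle-well-definied}, where both $C_1$ and $C_2$ are schemes and the fiber product is automatically a scheme. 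The gap is easily repaired: first cover $\mathcal{C}'$ by a regular scheme $C''$ via Proposition~\ref{prop:tuning-finite-cover}, then form $C'\times_{\mathcal{C}}C''$ (a scheme, since both factors are) and proceed as you wrote. But once you have covered $\mathcal{C}'$ by a scheme, you are already in the paper's situation and the fiber product is unnecessary.
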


\begin{proof}
  If $\mathcal{C}'$ is a scheme then this follows from the definitions of degree.  Let $p\colon C' \to \mathcal{C}'$ be a finite flat cover by a regular scheme $C'$. By 
\cite[\href{https://stacks.math.columbia.edu/tag/0CPT}{Tag 0CPT}]{stacks-project}, $f$ is proper; the composition $f \circ p$ is thus proper, quasi-finite, and flat, and in particular finite. We then have
\[
\deg f^* \mathcal{L} =    \frac{\deg p^*f^*\mathcal{L}}{\deg p}   = \deg f\frac{\deg p^*f^*\mathcal{L}}{\left(\deg p\right) \left(\deg f\right)}   = 
\deg f \cdot  \deg  \mathcal{L}.
\]
The proof for length is identical.
\end{proof}

\begin{proposition}
  \label{proposition:degree-length-exactness}  
  Let $0 \to \mathcal{V}' \to \mathcal{V} \to M \to 0$ be an exact sequence, where
$\mathcal{V}' \to \mathcal{V}$ is a map of vector bundles (metrized, in the Arakelov case) and $M$ is a finitely generated torsion sheaf on $\mathcal{C}$. Then 
\[
\deg \mathcal{V} = \deg \mathcal{V}'  + \length M.
\]
\end{proposition}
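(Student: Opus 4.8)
The statement is an additivity-of-degree result for a short exact sequence $0 \to \mathcal{V}' \to \mathcal{V} \to M \to 0$ on a stacky curve $\mathcal{C}$, where $\mathcal{V}',\mathcal{V}$ are vector bundles and $M$ is a finitely generated torsion sheaf. The strategy is to reduce immediately to the case where $\mathcal{C}$ is a regular scheme by pulling back along a finite flat surjection $p\colon C' \to \mathcal{C}$ with $C'$ regular and irreducible, which exists by Proposition \ref{prop:tuning-finite-cover}. Since $p$ is flat, $p^*$ is exact, so we obtain a short exact sequence $0 \to p^*\mathcal{V}' \to p^*\mathcal{V} \to p^*M \to 0$ on $C'$, with $p^*\mathcal{V}',p^*\mathcal{V}$ vector bundles and $p^*M$ finitely generated torsion (torsion is preserved because $p$ is dominant and $C'$ is reduced of dimension $1$). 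By Definition \ref{defn:degree-length}, it suffices to prove $\deg p^*\mathcal{V} = \deg p^*\mathcal{V}' + \length p^*M$ and then divide by $\deg p$; note the metrized structure only affects the vector bundle terms and plays no role for the torsion sheaf.

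So I am reduced to the following classical statement on a regular proper curve (or a disjoint union of spectra of rings of integers, in the arithmetic case): for a short exact sequence $0 \to V' \to V \to M \to 0$ with $V',V$ vector bundles of the same rank $r$ and $M$ torsion, one has $\deg V = \deg V' + \length M$. In the geometric case this follows by taking top exterior powers: the inclusion $V' \hookrightarrow V$ induces an injection $\det V' \hookrightarrow \det V$ of line bundles whose cokernel $Q$ is a torsion sheaf with $\length Q = \length M$ (this is the standard ``the determinant of the elementary divisors'' computation, checked locally at each point by putting the inclusion in Smith normal form over the regular local ring, which is a PID in dimension $1$), and then $\deg \det V = \deg \det V' + \length Q$ is the rank-one case, which is immediate from the definition of degree of a line bundle via a rational section. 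In the Arakelov case, the same exterior-power reduction applies: the archimedean metrics on $\det V$ and $\det V'$ agree generically (both are determined by the same Hermitian structure away from a zero-measure set, or more precisely $\det V'$ gets the restricted/sub-metric and at the generic point $V' = V$), so the archimedean contributions to the two Arakelov degrees are equal, and the difference is exactly the finite-length contribution $\log|\det V / \det V'| = \length Q = \length M$.

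The step I expect to require the most care is confirming that the reduction via $p^*$ is clean, specifically that $p^*M$ remains a \emph{finitely generated torsion} sheaf and that $\length$ is well-behaved under this pullback in the way Definition \ref{defn:degree-length} and Lemma \ref{lemma:degree-line-bundle-well-definied} demand; but this is exactly the content already established for lengths in those results, so it amounts to invoking them. The genuinely classical input — additivity of degree in a short exact sequence with torsion cokernel on a regular $1$-dimensional scheme — is standard (\cite[\href{https://stacks.math.columbia.edu/tag/0AYW}{Tag 0AYW}]{stacks-project} or the analogous Arakelov statement), and the only subtlety is bookkeeping the archimedean terms, which vanish in the difference because $M$ is torsion (as already noted in the remark following Definition \ref{defn:degree-length}: ``For a torsion sheaf, the Archimedean contributions are $0$''). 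Hence the proof is essentially: pull back to a regular scheme, reduce to line bundles via $\det$, and apply the classical rank-one statement.
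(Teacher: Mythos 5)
Your proof is correct and takes essentially the same route as the paper's: reduce via a finite flat cover $p\colon C'\to\cC$ (using the way degree and length are defined) to the case of a regular one-dimensional scheme, then observe that the torsion sheaf contributes no archimedean term, so the identity follows from the definition of the Arakelov degree. The paper's proof is a three-line sketch that leaves the geometric case as ``well known'' and invokes Lemma~\ref{lemma:functorality-of-degree} for the reduction to $\Spec\mathcal{O}_K$; you fill in the details (exactness of $p^*$, preservation of finitely generated torsion, the $\det$-reduction via Smith normal form, and the cancellation of archimedean contributions since $\mathcal{V}'\to\mathcal{V}$ is a generic isomorphism) that the paper leaves implicit, but the underlying argument is the same.
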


\begin{proof}
  In the geometric case this is well known. In the Arakelov case, by Lemma \ref{lemma:functorality-of-degree} we may assume that $\mathcal{C} = \Spec \mathcal{O}_K$ for some number field $K$. Since $M$ is a torsion sheaf and thus has no archimedean metric, the proof follows from the definition of degree (Equation \ref{eq:metrized-degree}).
\end{proof}

\def\cprime{$'$}


\begin{thebibliography}{10}

\bibitem{abramovichVA:level-structures}
Dan Abramovich and Anthony V\'{a}rilly-Alvarado.
\newblock Level structures on abelian varieties and {V}ojta's conjecture.
\newblock {\em Compos. Math.}, 153(2):373--394, 2017.
\newblock With an appendix by Keerthi Madapusi Pera.

\bibitem{abramovichVA:campana-points}
Dan Abramovich and Anthony V\'{a}rilly-Alvarado.
\newblock Campana points, {V}ojta's conjecture, and level structures on
  semistable abelian varieties.
\newblock {\em J. Th\'{e}or. Nombres Bordeaux}, 30(2):525--532, 2018.

\bibitem{abramovichVA:level-structures-kodaira}
Dan Abramovich and Anthony V\'{a}rilly-Alvarado.
\newblock Level structures on {A}belian varieties, {K}odaira dimensions, and
  {L}ang's conjecture.
\newblock {\em Adv. Math.}, 329:523--540, 2018.

\bibitem{albertsh1count}
Brandon Alberts.
\newblock Statistics of the first {G}alois cohomology group: a refinement of
  {M}alle's conjecture.
\newblock {\em Algebra Number Theory}, 15(10):2513--2569, 2021.

\bibitem{d4conductor}
S.~Ali Altu\u{g}, Arul Shankar, Ila Varma, and Kevin~H. Wilson.
\newblock The number of {$D_4$}-fields ordered by conductor.
\newblock {\em J. Eur. Math. Soc. (JEMS)}, 23(8):2733--2785, 2021.

\bibitem{bandini}
Andrea Bandini, Ignazio Longhi, and Stefano Vigni.
\newblock Torsion points on elliptic curves over function fields and a theorem
  of {I}gusa.
\newblock {\em Expositiones Mathematicae}, 27(3):175--209, 2009.

\bibitem{blrt:batmanhosed}
R. Beheshti and B. Lehmann and E. Riedl and S. Tanimoto.
\newblock Rational curves on del {P}ezzo surfaces in positive characteristic.
\newblock {\em arXiv:2110.00596} (2021) and to appear, {\em Trans. Amer. Math. Soc. Ser. B}.


\bibitem{berhuy}
Gr{\'e}gory Berhuy.
\newblock {\em An introduction to {G}alois cohomology and its applications},
  volume 377.
\newblock Cambridge University Press, 2010.

\bibitem{shaska}
L.~Beshaj, J.~Gutierrez, and T.~Shaska.
\newblock Weighted greatest common divisors and weighted heights.
\newblock {\em J. Number Theory}, 213:319--346, 2020.

\bibitem{bhargava:Mass-formulae-for-extensions-of-local-fields-and-conjectures-on-the-density-of-number-field-discriminants}
Manjul Bhargava.
\newblock Mass formulae for extensions of local fields, and conjectures on the
  density of number field discriminants.
\newblock {\em Int. Math. Res. Not. IMRN}, (17):Art. ID rnm052, 20, 2007.

\bibitem{bhargavaG:average-2-selmer-hyperelliptic}
Manjul Bhargava and Benedict~H. Gross.
\newblock The average size of the 2-{S}elmer group of {J}acobians of
  hyperelliptic curves having a rational {W}eierstrass point.
\newblock In {\em Automorphic representations and {$L$}-functions}, volume~22
  of {\em Tata Inst. Fundam. Res. Stud. Math.}, pages 23--91. Tata Inst. Fund.
  Res., Mumbai, 2013.

\bibitem{bhargavaharron}
Manjul Bhargava and Piper Harron.
\newblock The equidistribution of lattice shapes of rings of integers in cubic,
  quartic, and quintic number fields.
\newblock {\em Compositio Mathematica}, 152(6):1111--1120, 2016.

\bibitem{guillermoshapes}
Wilmar Bola{\~n}os and Guillermo Mantilla-Soler.
\newblock The shape of cyclic number fields.
\newblock {\em arXiv:1912.07054, to appear in Canadian Mathematical Bulletin},
  2022.

\bibitem{bdpp}
S{\'e}bastien Boucksom, Jean-Pierre Demailly, Mihai P{\u{a}}un, and Thomas
  Peternell.
\newblock The pseudo-effective cone of a compact {K}{\"a}hler manifold and
  varieties of negative {K}odaira dimension.
\newblock {\em Journal of Algebraic Geometry}, 22(2):201--248, 2013.

\bibitem{bourqui:toric}
David Bourqui.
\newblock Algebraic points, non-anticanonical heights and the severi problem on
  toric varieties.
\newblock {\em Proceedings of the London Mathematical Society},
  113(4):474--514, 2016.

\bibitem{browningS:free-rational-curves}
Tim Browning and Will Sawin.
\newblock Free rational curves on low degree hypersurfaces and the circle
  method.
\newblock {\em arXiv:1810.06882}, 2018.

\bibitem{cadman:using-stacks-to-impose-tangency}
Charles Cadman.
\newblock Using stacks to impose tangency conditions on curves.
\newblock {\em Amer. J. Math.}, 129(2):405--427, 2007.

\bibitem{conrad:keelMori}
Brian Conrad.
\newblock Keel--{M}ori theorem via stacks.
\newblock {\em preprint}, 2005.

\bibitem{Conrad:arithmeticModuliOf}
Brian Conrad.
\newblock Arithmetic moduli of generalized elliptic curves.
\newblock {\em J. Inst. Math. Jussieu}, 6(2):209--278, 2007.

\bibitem{darda2021rational-arxiv}
Ratko Darda.
\newblock Rational points of bounded height on weighted projective stacks.
\newblock {\em arXiv preprint arXiv:2106.10120}, 2021.

\bibitem{dardayasudabatman}
Ratko Darda and Takehiko Yasuda.
\newblock The Batyrev--Manin conjecture for DM stacks
\newblock {\em arXiv preprint arXiv:2207.03645}, 2022.

\bibitem{deligne:formulaire}
P.~Deligne.
\newblock Courbes elliptiques: formulaire d'apr\`es {J}. {T}ate.
\newblock In {\em Modular functions of one variable, {IV} ({P}roc. {I}nternat.
  {S}ummer {S}chool, {U}niv. {A}ntwerp, {A}ntwerp, 1972)}, pages 53--73.
  Lecture Notes in Math., Vol. 476, 1975.

\bibitem{deng1998rational}
An-Wen Deng.
\newblock Rational points on weighted projective spaces.
\newblock {\em arXiv:9812082}, 1998.

\bibitem{EdidinGS:artinCycle}
Dan Edidin, Anton Geraschenko, and Matthew Satriano.
\newblock There is no degree map for 0-cycles on {A}rtin stacks.
\newblock {\em Transform. Groups}, 18(2):385--389, 2013.

\bibitem{EdidinHKV}
Dan Edidin, Brendan Hassett, Andrew Kresch, and Angelo Vistoli.
\newblock Brauer groups and quotient stacks.
\newblock {\em Amer. J. Math.}, 123(4):761--777, 2001.

\bibitem{Eisenbud:commutativeAlgebra}
David Eisenbud.
\newblock {\em Commutative algebra}, volume 150 of {\em Graduate Texts in
  Mathematics}.
\newblock Springer-Verlag, New York, 1995.

\bibitem{elkies:abcimpilesmordell}
Noam~D Elkies.
\newblock {ABC} implies {M}ordell.
\newblock {\em International Mathematics Research Notices}, 1991(7):99--109,
  1991.

\bibitem{ellenberg:countingff}
Jordan~S Ellenberg and Akshay Venkatesh.
\newblock Counting extensions of function fields with bounded discriminant and
  specified {G}alois group.
\newblock In {\em Geometric methods in algebra and number theory}, pages
  151--168. Springer, 2005.

\bibitem{Fantechi--Mann--Nironi:smooth-toric-deligne-mumford-stacks}
Barbara Fantechi, Etienne Mann, and Fabio Nironi.
\newblock Smooth toric {D}eligne-{M}umford stacks.
\newblock {\em J. Reine Angew. Math.}, 648:201--244, 2010.

\bibitem{lehmannzariski}
Mihai Fulger and Brian Lehmann.
\newblock Zariski decompositions of numerical cycle classes.
\newblock {\em Journal of Algebraic Geometry}, 26(1):43--106, 2017.

\bibitem{gao}
X.~Gao.
\newblock {\em On {N}orthcott's theorem}.
\newblock PhD thesis, University of Colorado, 1995.

\bibitem{GeraschenkoSatrianoToricI}
Anton Geraschenko and Matthew Satriano.
\newblock Toric stacks {I}: {T}he theory of stacky fans.
\newblock {\em Trans. Amer. Math. Soc.}, 367(2):1033--1071, 2015.

\bibitem{geraschenkoS:bottomUp}
Anton Geraschenko and Matthew Satriano.
\newblock A ``bottom up'' characterization of smooth {D}eligne-{M}umford
  stacks.
\newblock {\em Int. Math. Res. Not. IMRN}, (21):6469--6483, 2017.

\bibitem{granville:abcsquarefree}
Andrew Granville.
\newblock {ABC} allows us to count squarefrees.
\newblock {\em \em Int. Math. Res. Not. IMRN} no. 19, 991-1009 (1998)


\bibitem{GrauertR:coherent-analytic-sheaves}
Hans Grauert and Reinhold Remmert.
\newblock {\em Coherent analytic sheaves}, volume 265 of {\em Grundlehren der
  mathematischen Wissenschaften [Fundamental Principles of Mathematical
  Sciences]}.
\newblock Springer-Verlag, Berlin, 1984.

\bibitem{grizzardgunther}
Robert Grizzard and Joseph Gunther.
\newblock Slicing the stars: counting algebraic numbers, integers, and units by
  degree and height.
\newblock {\em Algebra \& Number Theory}, 11(6):1385--1436, 2017.

\bibitem{guignard}
Quentin Guignard.
\newblock Counting algebraic points of bounded height on projective spaces.
\newblock {\em Journal of Number Theory}, 170:103--141, 2017.

\bibitem{harronharron:v4}
Piper H and Robert Harron.
\newblock The shapes of {G}alois quartic fields.
\newblock {\em Transactions of the American Mathematical Society},
  373(10):7109--7152, 2020.

\bibitem{harron:cubics}
Robert Harron.
\newblock The shapes of pure cubic fields.
\newblock {\em Proceedings of the American Mathematical Society},
  145(2):509--524, 2017.

\bibitem{Kedlaya2011}
Kiran Kedlaya.
\newblock A construction of polynomials with squarefree discriminants.
\newblock {\em Proceedings of the American Mathematical Society},
  140(9):3025--3033, 2012.

\bibitem{KeelM:Quotients}
Se{\'a}n Keel and Shigefumi Mori.
\newblock Quotients by groupoids.
\newblock {\em Ann. of Math. (2)}, 145(1):193--213, 1997.

\bibitem{Knutson:algebraicSpaces}
Donald Knutson.
\newblock {\em Algebraic spaces}.
\newblock Lecture Notes in Mathematics, Vol. 203. Springer-Verlag, Berlin-New
  York, 1971.

\bibitem{kobin:Artin-Schreier-Root-Stacks}
Andrew Kobin.
\newblock Artin-{S}chreier root stacks.
\newblock {\em J. Algebra}, 586:1014--1052, 2021.

\bibitem{KreschVistoli:finite-covers}
Andrew Kresch and Angelo Vistoli.
\newblock On coverings of {D}eligne-{M}umford stacks and surjectivity of the
  {B}rauer map.
\newblock {\em Bull. London Math. Soc.}, 36(2):188--192, 2004.

\bibitem{landesman:thesis}
Aaron Landesman.
\newblock A thesis of minimal degree:~two.
\newblock 2021.
\newblock Thesis (Ph.D.)--Stanford University.

\bibitem{lerudulier}
C{\'e}cile Le~Rudulier.
\newblock {\em Points alg{\'e}briques de hauteur born{\'e}e}.
\newblock PhD thesis, Rennes 1, 2014.

\bibitem{geometricconsistency}
Brian Lehmann, Akash~Kumar Sengupta, and Sho Tanimoto.
\newblock Geometric consistency of {M}anin's conjecture.
\newblock {\em arXiv:1805.10580}, 2018.

\bibitem{manzateanu}
Adelina M{\^a}nz{\u{a}}{\c{t}}eanu.
\newblock Counting points on ${H}ilb^m \mathbf{P}^2$ over function fields.
\newblock {\em arXiv:1905.04772}, 2019.

\bibitem{masservaaler1}
David Masser and Jeffrey~D Vaaler.
\newblock Counting algebraic numbers with large height {I}.
\newblock In {\em Diophantine approximation}, pages 237--243. Springer, 2008.

\bibitem{masservaaler2}
David Masser and Jeffrey Vaaler.
\newblock Counting algebraic numbers with large height {II}.
\newblock {\em Transactions of the American Mathematical Society},
  359(1):427--445, 2007.



\bibitem{meier:vb}
Lennart Meier.
\newblock Vector bundles on the moduli stack of elliptic curves.
\newblock {\em Journal of Algebra}, 428:425--456, 2015.

\bibitem{tmf}
Lennart Meier and Viktoriya Ozornova.
\newblock Rings of modular forms and a splitting of {${\rm TMF}_0(7)$}.
\newblock {\em Selecta Mathematica}, 26, no.~1, Paper No.~7, 73 pp., 2020.

\bibitem{nasserdenS:The-density-of-rational-points-P1-with-three-stacky-points}
Brett Nasserden and Stanley~Yao Xiao.
\newblock The density of rational points on $\mathbb{P}^1$ with three stacky
  points.
\newblock {\em arXiv:2011.06586}, 2020.

\bibitem{pazuki:Modular-invariants-and-isogenies}
Fabien Pazuki.
\newblock Modular invariants and isogenies.
\newblock {\em Int. J. Number Theory}, 15(3):569--584, 2019.

\bibitem{peyre:hauteurs-et-mesures}
Emmanuel Peyre.
\newblock Hauteurs et mesures de {T}amagawa sur les vari\'et\'es de {F}ano.
\newblock {\em Duke Math. J.}, 79(1):101--218, 1995.

\bibitem{peyre:liberte-et-accumulation}
Emmanuel Peyre.
\newblock Libert\'{e} et accumulation.
\newblock {\em Doc. Math.}, 22:1615--1659, 2017.

\bibitem{peyre:beyondheights}
Emmanuel Peyre.
\newblock Chapter {V}: beyond heights: slopes and distribution of rational
  points.
\newblock In {\em Arakelov geometry and {D}iophantine applications}, volume
  2276 of {\em Lecture Notes in Math.}, pages 215--279. Springer, Cham, 2021.

\bibitem{pizzopomerancevoight}
Maggie Pizzo, Carl Pomerance, and John Voight.
\newblock Counting elliptic curves with an isogeny of degree three.
\newblock {\em Proc. Amer. Math. Soc. Ser. B}, 7:28--42, 2020.

\bibitem{prieszhu}
Rachel Pries and Hui~June Zhu.
\newblock The $ p $-rank stratification of {A}rtin-{S}chreier curves.
\newblock In {\em Annales de l'Institut Fourier}, volume~62, pages 707--726,
  2012.

\bibitem{rydh:quotients}
David Rydh.
\newblock Existence and properties of geometric quotients.
\newblock {\em J. Algebraic Geom.}, 22(4):629--669, 2013.

\bibitem{rydh-noetherian-approximation}
David Rydh.
\newblock Noetherian approximation of algebraic spaces and stacks.
\newblock {\em J. Algebra}, 422:105--147, 2015.

\bibitem{schmidt93}
Wolfgang~M Schmidt.
\newblock Northcott's theorem on heights {I}. {A} general estimate.
\newblock {\em Monatshefte f{\"u}r Mathematik}, 115(1-2):169--181, 1993.

\bibitem{schmidt95}
Wolfgang~M Schmidt.
\newblock Northcott's theorem on heights {II}. {T}he quadratic case.
\newblock {\em Acta Arithmetica}, 70(4):343--375, 1995.

\bibitem{silverman:aec}
Joseph~H. Silverman.
\newblock {\em The arithmetic of elliptic curves}, volume 106 of {\em Graduate
  Texts in Mathematics}.
\newblock Springer, Dordrecht, second edition, 2009.

\bibitem{stacks-project}
The {Stacks Project Authors}.
\newblock {\itshape Stacks Project}.
\newblock \url{http://stacks.math.columbia.edu}.

\bibitem{starr2018weak}
Jason Starr and Zhiyu Tian and Runhong Zong.
\newblock Weak approximation for Fano complete intersections in positive characteristic.
\newblock {\em arXiv:1811.02466} (2018); to appear, {\em Ann. Inst. Fourier}.

\bibitem{starr2020rational}
\newblock Jason Starr and Chenyang Xu.
\newblock Rational points of rationally simply connected varieties over global function fields.
\newblock {\em arXiv:1703.08334v1} (2017)

\bibitem{thunderwidmer}
Jeffrey~Lin Thunder and Martin Widmer.
\newblock Counting points of fixed degree and given height over function
  fields.
\newblock {\em Bulletin of the London Mathematical Society}, 45(2):283--300,
  2013.

\bibitem{vistoli:intersection-theory-on-algebraic-stacks}
Angelo Vistoli.
\newblock Intersection theory on algebraic stacks and on their moduli spaces.
\newblock {\em Invent. Math.}, 97(3):613--670, 1989.

\bibitem{voightZB:canonical-ring-of-a-stacky-curve}
John Voight and David Zureick-Brown.
\newblock The canonical ring of a stacky curve.
\newblock {\em Mem. Amer. Math. Soc.}, 277(1362):v+144, 2022.

\bibitem{vojta:generalabc}
Paul Vojta.
\newblock A more general {$abc$} conjecture.
\newblock {\em Internat. Math. Res. Notices}, (21):1103--1116, 1998.

\bibitem{widmer}
Martin Widmer.
\newblock Counting points of fixed degree and bounded height.
\newblock {\em Acta Arithmetica}, 140:145--168, 2009.

\bibitem{woodY:2015mass-I}
Melanie~Machett Wood and Takehiko Yasuda.
\newblock Mass formulas for local {G}alois representations and quotient
  singularities. {I}: A comparison of counting functions.
\newblock {\em International Mathematics Research Notices}, 2015.

\end{thebibliography}
\end{document}